\newcommand{\Nn}{\mathbb N}
\newcommand{\fp}{\mbox{\rm {fp}}}
\newcommand{\Tt}{\mathcal T}
\newcommand{\scp}{\mathfrak{Sch}}
\newcommand{\crit}{\mathfrak{Crit}}
\newcommand{\ess}{\mathfrak{Ess}}
\newcommand{\fr}{\mathfrak{r}}
\newcommand{\fh}{\mathfrak{h}}
\newcommand{\slide}{\mbox{\rm sl}}
\newcommand{\sd}{\mbox{\rm sld}}
\newcommand{\fn}{\mbox{\rm fn}}
\newcommand{\hn}{\mbox{\rm hn}}
\newcommand{\gfn}{\mbox{\rm gfn}}
\definecolor{tur}{rgb}{1,0.5,0}
\font\co=lcircle10
\def\jr{\smash{	\raise2pt\hbox{\co \rlap{\rlap{\char'005} \char'007}}
		\raise6pt\hbox{\rlap{\vrule height2pt}}
		\raise2pt\hbox{\rlap{\hskip4pt \vrule height0.4pt depth0pt
                 width2.5pt}}
		\raise2pt\hbox{\rlap{\hskip-6pt \vrule height.4pt depth0pt
                 width2.2pt}}
		\lower4pt\hbox{\rlap{\vrule height2.5pt}}}}
\def\je{\smash{\raise2pt\hbox{\co \rlap{\rlap{\char'005}
                \phantom{\char'007}}}\raise6pt\hbox{\rlap{\vrule height2pt}}
	       \raise2pt\hbox{\rlap{\hskip-6pt \vrule height.4pt depth0pt
                width2.2pt}}}}
\def\+{\smash{\lower4pt\hbox{\rlap{\vrule height12pt}}
                \raise2pt\hbox{\rlap{\hskip-6pt \vrule height.4pt depth0pt
                width12.5pt}}}}
\def\er{\smash{	\raise2pt\hbox{\co \rlap{\phantom{\rlap{\char'005}} \char'007}}
		\raise6pt\hbox{\rlap{\phantom{\vrule height2pt}}}
		\raise2pt\hbox{\rlap{\hskip4pt \vrule height0.4pt depth0pt
                 width2.5pt}}
		\raise2pt\hbox{\rlap{\phantom{%
		 \hskip-6pt \vrule height.4pt depth0pt width2.2pt}}}
		\lower4pt\hbox{\rlap{\vrule height2.5pt}}}}
\def\hor{\smash{\lower2pt\hbox{\rlap{\phantom{\vrule height10pt}}}
                \raise2pt\hbox{\rlap{\hskip-6pt \vrule height.4pt depth0pt
                width12.5pt}}}}
\def\ver{\smash{\lower2pt\hbox{\rlap{\vrule height10pt}}
                \raise2pt\hbox{\rlap{\phantom{%
		\hskip-6pt \vrule height.4pt depth0pt width12.5pt}}}}}
	 \def\*{\makebox[0ex]{\footnotesize$+\,$}}%
   \newtheorem{thm}{Theorem}[section]
   \newtheorem{pro}{Proposition}[section]
   \newtheorem{lem}{Lemma}[section]
   \newtheorem{cor}{Corollary}[section]
\theoremstyle{definition}
   \newtheorem{defn}{Definition}[section]
   \newtheorem{ex}{Example}[section]
   \newtheorem{rem}{Remark}[section]
\newcommand{\cellsize}{7.142}
\newlength{\cellsz} \setlength{\cellsz}{\cellsize\unitlength}
\newsavebox{\cell}
\sbox{\cell}{\begin{picture}(\cellsize,\cellsize)
\put(0,0){\line(1,0){\cellsize}}
\put(0,0){\line(0,1){\cellsize}}
\put(\cellsize,0){\line(0,1){\cellsize}}
\put(0,\cellsize){\line(1,0){\cellsize}}
\end{picture}}
\newcommand\cellify[1]{\def\thearg{#1}\def\nothing{}%
\ifx\thearg\nothing
\vrule width0pt height\cellsz depth0pt\else
\hbox to 0pt{\usebox{\cell} \hss}\fi%
\vbox to \cellsz{
\vss
\hbox to \cellsz{\hss$#1$\hss}
\vss}}
\newcommand\tableau[1]{\vtop{\let\\\cr
\baselineskip -16000pt \lineskiplimit 16000pt \lineskip 0pt
\ialign{&\cellify{##}\cr#1\crcr}}}
\newcommand{\cellsizea}{14.142}
\newlength{\cellsza} \setlength{\cellsza}{\cellsizea\unitlength}
\newsavebox{\cella}
\sbox{\cella}{\begin{picture}(\cellsizea,\cellsizea)
\put(0,0){\line(1,0){\cellsizea}}
\put(0,0){\line(0,1){\cellsizea}}
\put(\cellsizea,0){\line(0,1){\cellsizea}}
\put(0,\cellsizea){\line(1,0){\cellsizea}}
\end{picture}}
\newcommand\cellifya[1]{\def\thearg{#1}\def\nothing{}%
\ifx\thearg\nothing
\vrule width0pt height\cellsz1 depth0pt\else
\hbox to 0pt{\usebox{\cella} \hss}\fi%
\vbox to \cellsza{
\vss
\hbox to \cellsza{\hss$#1$\hss}
\vss}}
\newcommand\tableaux[1]{\vtop{\let\\\cr
\baselineskip -16000pt \lineskiplimit 16000pt \lineskip 0pt
\ialign{&\cellifya{##}\cr#1\crcr}}}
\title[]{Tower Diagrams and Pieri's Rule}
\author{Olcay Co\c{s}kun}
\author{M\"uge Ta\c{s}k\i n}
\address{Olcay Coşkun: Boğaziçi Üniversitesi, Matematik B\"ol\"um\"u 34342 Bebek,  Istanbul, Turkey}
\address{Müge Taşkın: Boğaziçi Üniversitesi, Matematik B\"ol\"um\"u 34342 Bebek,  Istanbul, Turkey }
        \thanks{Both of the authors are supported by T\"ubitak/1001/115F156).}
\keywords{}
\begin{document}

\begin{abstract} We introduce an algorithm to describe Pieri's Rule for multiplication of Schubert polynomials. The algorithm uses tower diagrams introduced
by the authors and another new algorithm that describes Monk's Rule. Our result is different from the well-known descriptions (and proofs) of the rule by Bergeron-Billey
and Kogan-Kumar and uses Sottile's version of Pieri's Rule.

\end{abstract}
\maketitle
\section{Introduction}\label{sec:intro}

The origin of Schubert polynomials lies in the study  of the cohomology of flag manifolds by  Bernstein-Gelfand-Gelfand \cite{BGG} and Demazure \cite{D}. After the appearance of their  explicit description in the work  by Lascoux and Sch\"{u}tzenberger \cite{LS}, they become of great interest in combinatorics.  The works of Macdonald \cite{Ma}, Billey et al. \cite{BJS} and Fomin and Stanley \cite{FS} expose the rich combinatorial aspects of these polynomials. See also \cite{FGR}.

The basic problem regarding Schubert polynomials is to find a combinatorial description of the Littlewood-Richardson
coeffcients. Given a permutation $\omega$, we denote the corresponding Schubert polynomial by $\mathfrak S_\omega$. If
$\nu$ is another permutation, the coefficients $c_{\omega, \nu}^\mu$ in the expansion
\[
\mathfrak S_\omega \cdot \mathfrak S_\nu = \sum_{\mu} c_{\omega, \nu}^\mu \mathfrak S_\mu
\]
are called Littlewood-Richardson coefficients and known to be non-negative.

A general combinatorial description of these coefficients are not known but several special cases are known. The most basic
case is where one of the permutations is an adjacent transposition $s_k$ for some $k$. The Monk's rule states that the
Littlewood-Richardson coefficient $c_{\omega, s_k}^\mu$ is at most 1 and is equal to 1 exactly if $\mu$ is a cover of $\omega$
in $k$-Bruhat order. In \cite{NS}, Bergeron and Billey introduced an insertion algorithm to RC-graphs and give a new and
combinatorial proof of Monk's rule. In the same paper, they have two conjectural versions of Pieri's rule which describes two
other basic cases where one of the permutations is either a row permutation or a column permutation. See Section
\ref{sec:pieri} for definitions. Both of these conjectures are proved by Sottile \cite{F} in a more general way. More precisely,
Sottile proved that in these cases, the coefficients are again at most one and a coefficient $c_{\omega, r}^\mu$ non-zero if and
only if there is a saturated chain in the $k$-Bruhat order satisfying certain conditions. After his proof,  Kogan and 
Kumar \cite{KK}   give new combinatorial proofs of Pieri's rule and  later Kogan \cite{K} prove some other special cases.
There is also an insertion algorithm by Lenart \cite{len} for Pieri rule.

In this paper, we use tower diagrams to introduce two algorithms, Monk's and Pieri's algorithms. Given a permutation $\omega$
and a natural number $k$, Monk's algorithm produces the set of permutations that appear in the expansion of the product
$\mathfrak S_\omega\cdot \mathfrak S_k$ whereas Pieri's algorithm produces a similar set for Pieri's rule by determining a new
way of constructing the above mentioned $k$-Bruhat chains. Although we do not obtain a new special case in this paper, our techniques can be applied to more general cases and we are planning to cover these cases in an upcoming paper.

To summarize the results in this paper, recall that, in \cite{CT1} and \cite{CT2}, we introduced tower diagrams as a new approach to study reduced words of
permutations and Schubert polynomials. In \cite{CT1}, we have shown that a tower diagram can be attached
to any finite permutation and conversely, any tower diagram determines a unique finite permutation. Both of
these correspondences are given by explicit algorithms, called sliding and flight. We have also shown that
the tower diagram of a permutation regarded as a weak composition is the Lehmer code of the inverse
permutation. On the other hand, in \cite{CT2}, we have shown that it is possible to describe the Schubert polynomial of a
permutation using certain types of labellings of the corresponding tower diagram.

In this paper, we first improve our sliding and flight algorithms, and as an application of these new versions, we obtain
descriptions of the well-known Monk's and Pieri's rule for the products of Schubert polynomials. To be more precise, our first
version of the sliding algorithm only works with reduced expressions of permutations whereas the new version produces
the tower diagram of a given permutation starting from an arbitrary expression for the permutation. See Section \ref{sec:gen}
for details. Also, the reverse of the sliding, the flight algorithm, is introduced in \cite{CT1} as an algorithm that produces reduced
words from a given tower diagram. The new version, given in Section \ref{sec:flight} produces the associated permutation as a product
of (not necessarily adjacent) transpositions.

The new algorithms enables us to determine the tower diagram of the product of a permutation $\omega$ with a transposition
$t$ as a modification of tower diagram of $\omega$. The main observation is that when the length of the product $\omega t$ is
one more than that of $\omega$, then only one or only two towers are modified, and moreover, these towers can be
determined using the inverse permutation. In Section \ref{sec:slidetrans}, we explain basic steps of the general modification
process.

Our main results are contained in Section \ref{sec:monks} and Section \ref{sec:pieri} where we introduced Monk's and Pieri's
algorithms, mentioned above. We again note that our description of Monk's Rule is not a new proof of it, however, it is different
from the one given in Bergeron and Billey's milestone \cite{NS}. Indeed, in \cite{NS}, an insertion algorithm for RC-graphs is
used to determine the above set of permutations and hence it works out the Schubert polynomials that appear in the product
monomial-by-monomial. On the other hand, our algorithm directly determines the permutations and do not refer to monomials
forming its Schubert polynomial. For  Pieri's rule, note that Sottile's Theorem asserts that the product of a Schubert polynomial
with the Schubert polynomial of a row permutation is determined by the existence of certain chains in $k$-Bruhat order  and
Bergeron and Billey's conjecture gives an example of such a chain.  In Section \ref{sec:pieri}, we show how to obtain all such
chains and introduce an algorithm that chooses one for each permutation. Generically, these choices are different from the
ones given by Bergeron and Billey \cite{NS} and also from the ones given by Kogan and Kumar in \cite{KK}.

\section{From permutations to tower diagrams: Generalized sliding algorithm}\label{sec:gen}

We start by   recollecting necessary notation from
\cite{CT1} and \cite{CT2}. To begin with, a sequence $\mathcal
T=(\mathcal T_1,\mathcal T_2,\ldots)$ of non-negative integers such
that $\mathcal T_i=0$ for all $i> n$ for some $n$ is called a
\textbf{\textit{tower diagram}}. If $n$ is the largest integer for
which $\mathcal T_n\neq 0$, we write $\mathcal T=(\mathcal
T_1,\mathcal T_2,\ldots,\mathcal T_n)$ and call $\mathcal T_i$ the
\textbf{\textit{$i$-th tower}} of $\mathcal T$. We
identify each tower $\mathcal T_i$ with a vertical strip of height
$\mathcal T_i$ placed in the first quadrant of the plane over the
interval $[i-1,i]$. Therefore if $\mathcal T =(1,0,4,2,0,0,2)$, then
we can represent $\mathcal T$ as the following diagram.

\begin{center}
\setlength{\unitlength}{0.25mm}
\begin{picture}(80,60)
\multiput(0,0)(0,0){1}{\line(1,0){80}}
\multiput(0,0)(0,0){1}{\line(0,1){60}}
\multiput(0,10)(2,0){40}{\line(0,1){.1}}
\multiput(0,20)(2,0){40}{\line(0,1){.1}}
\multiput(0,30)(2,0){40}{\line(0,1){.1}}
\multiput(0,40)(2,0){40}{\line(0,1){.1}}
\multiput(0,50)(2,0){40}{\line(0,1){.1}}
\multiput(0,60)(2,0){40}{\line(0,1){.1}}
\multiput(10,0)(0,2){30}{\line(1,0){.1}}
\multiput(20,0)(0,2){30}{\line(1,0){.1}}
\multiput(30,0)(0,2){30}{\line(1,0){.1}}
\multiput(40,0)(0,2){30}{\line(1,0){.1}}
\multiput(50,0)(0,2){30}{\line(1,0){.1}}
\multiput(60,0)(0,2){30}{\line(1,0){.1}}
\multiput(70,0)(0,2){30}{\line(1,0){.1}}
  \put(0,0){\tableau{{}}}
\put(20,30){\tableau{{}\\{}\\{}\\{}}} \put(30,10){\tableau{{}\\{}}}
\put(60,10){\tableau{{}\\{}}}
\end{picture}
\end{center}
Alternatively, a given tower diagram $\mathcal{T}$ can be represented by
the set consisting of the south-east corners of the cells in the
above representation. For the rest of the paper, we identify any cell with
its south-east corner. Any tower diagram $\mathcal T$ can be
filtered by the sequence $\mathcal T_{\ge 0}, \mathcal
T_{\ge 1}, \ldots$ where $\mathcal T_{\ge j}$ is the tower diagram
obtained from $\mathcal T$ by replacing all towers $\mathcal T_i$ with the
towers of height zero, for each $i< j$. Similarly, for $i\leq j$, we
denote by $\mathcal T_{[i,j]}$ the diagram obtained by replacing all
towers in $\mathcal T$ with index less than $i$ and greater than $j$
by towers of height zero.

Given two tower diagrams $\mathcal T$ and $\mathcal U$ such that there is an index $j$ with $\mathcal T_i = 0$ for
$i\ge j$ and $\mathcal U_k = 0$ for $k< j$. We define $\mathcal T \sqcup \mathcal U$ as the tower diagram with towers
$\mathcal T_1, \ldots, \mathcal T_{j-1}, \mathcal U_j, \mathcal U_{j+1}, \ldots$. With this notation, we always have
$\mathcal T = \mathcal T_{[0,j-1]} \sqcup \mathcal T_{\ge j}$.

Recall that the sliding algorithm in \cite{CT1} determines the rules
for sliding an adjacent transposition into a tower
diagram so that when applied on a sequence of such transpositions, only the
reduced expressions of permutations produce tower diagrams. Moreover this
property provides a bijection between the set of all permutations
and the set of all tower diagrams. Here we first provide a
generalization of this algorithm so that the sliding of any finite sequence of natural numbers
produces a tower diagram. In the next section, we shall show that the results of the new algorithm are compatible with the
results of the previous version.

Let $\mathcal T =(\mathcal T_1,\mathcal T_2, \ldots, \mathcal T_n)$ and let $c$ be a cell in $\mathcal T$. Write
$x+y = i$ for the line passing through the main diagonal of $c$. We call $i$ the \textit{slide} of $c$ and denote it
by $\slide(c)$. We also call the line $x+y=\slide(c)$ the \emph{slide} of $c$. If $e$ is another cell with $\slide(e)=j$, then
we define the \textit{slide distance} between $c$ and $e$ as the difference $i-j$ and denote it by $\sd(c,e) = i-j $.

To slide a natural number $i$ to $\mathcal T$, we first place a new cell $c$  whose east border is the interval
$[i,i+1]$ on the $y$-axis so that $\slide(c)=i$. Then, we let $c$  slide through its slide subject to the following conditions.
Let $\mathcal T_k$ be the first non-empty tower in $\mathcal T$ and $d$ be the top cell of $\mathcal T_k$.

\begin{enumerate}
\item[\bf{1.}] \textbf{Direct Pass:} If $\sd(c,d) \ge 2$, the sliding of $c$ continues on
$\mathcal T_{> k}$ subject to the conditions in this list.
\item[\bf{2.}] \textbf{Addition:} If $\sd(c,d) = 1$, then $c$ is placed on the top of $d$ and sliding of $c$ stops.
\item[\bf{3.}] \textbf{Deletion:} If $\sd(c,d) = 0$, then  sliding of $c$ stops by deleting   $d$.
\item[\bf{4.}] \textbf{Zigzag pass:} If $\sd(c,d)<0$, then we move the cell $c$ one level up so that its
new slide is $i+1$ and let it continue its sliding through $\mathcal T_{>k}$ on its new slide, subject to the conditions in this list.
\end{enumerate}

In the following Figure we illustrate the possible cases. Note that  in the third case the deleted cell is labeled by
$\varnothing$ and in the other cases the cells labeled by $\bullet$ represent the new positions of $c$.

\begin{figure}[h]
\begin{center}
\setlength{\unitlength}{0.25mm}
\begin{picture}(80,60)
\multiput(0,0)(0,0){1}{\line(1,0){60}}
\multiput(0,0)(0,0){1}{\line(0,1){60}}
\multiput(0,10)(2,0){30}{\line(0,1){.1}}
\multiput(0,20)(2,0){30}{\line(0,1){.1}}
\multiput(0,30)(2,0){30}{\line(0,1){.1}}
\multiput(0,40)(2,0){30}{\line(0,1){.1}}
\multiput(0,50)(2,0){30}{\line(0,1){.1}}
\multiput(0,60)(2,0){30}{\line(0,1){.1}}
\multiput(10,0)(0,2){30}{\line(1,0){.1}}
\multiput(20,0)(0,2){30}{\line(1,0){.1}}
\multiput(30,0)(0,2){30}{\line(1,0){.1}}
\multiput(40,0)(0,2){30}{\line(1,0){.1}}
\multiput(50,0)(0,2){30}{\line(1,0){.1}}
\multiput(60,0)(0,2){30}{\line(1,0){.1}}
\put(10,10){\tableau{{d}\\{}}} \put(20,0){\tableau{{}}}
 \put(-10,50){\tableau{{c}}}
\multiput(23,22)(0,0){1}{$\bullet$}
\multiput(0,50)(0,0){1}{\vector(1,-1){25}}
\multiput(5,-10)(0,0){1}{ }
\multiput(15,-10)(0,0){1}{}
\end{picture}
\hskip 0.05in
\begin{picture}(80,60)
\multiput(0,0)(0,0){1}{\line(1,0){60}}
\multiput(0,0)(0,0){1}{\line(0,1){60}}
\multiput(0,10)(2,0){30}{\line(0,1){.1}}
\multiput(0,20)(2,0){30}{\line(0,1){.1}}
\multiput(0,30)(2,0){30}{\line(0,1){.1}}
\multiput(0,40)(2,0){30}{\line(0,1){.1}}
\multiput(0,50)(2,0){30}{\line(0,1){.1}}
\multiput(0,60)(2,0){30}{\line(0,1){.1}}
\multiput(10,0)(0,2){30}{\line(1,0){.1}}
\multiput(20,0)(0,2){30}{\line(1,0){.1}}
\multiput(30,0)(0,2){30}{\line(1,0){.1}}
\multiput(40,0)(0,2){30}{\line(1,0){.1}}
\multiput(50,0)(0,2){30}{\line(1,0){.1}}
\multiput(60,0)(0,2){30}{\line(1,0){.1}}
\put(10,20){\tableau{{d}\\{}\\{}}} \put(20,0){\tableau{{}}}
\put(-10,50){\tableau{{c}}} \multiput(13,32)(0,0){1}{$\bullet$}
\multiput(0,50)(0,0){1}{\vector(1,-1){15}}
\multiput(5,-10)(0,0){1}{}
\multiput(15,-10)(0,0){1}{}
\end{picture}\hskip 0.05in
\hskip 0.05in
\begin{picture}(80,60)
\multiput(0,0)(0,0){1}{\line(1,0){60}}
\multiput(0,0)(0,0){1}{\line(0,1){60}}
\multiput(0,10)(2,0){30}{\line(0,1){.1}}
\multiput(0,20)(2,0){30}{\line(0,1){.1}}
\multiput(0,30)(2,0){30}{\line(0,1){.1}}
\multiput(0,40)(2,0){30}{\line(0,1){.1}}
\multiput(0,50)(2,0){30}{\line(0,1){.1}}
\multiput(0,60)(2,0){30}{\line(0,1){.1}}
\multiput(10,0)(0,2){30}{\line(1,0){.1}}
\multiput(20,0)(0,2){30}{\line(1,0){.1}}
\multiput(30,0)(0,2){30}{\line(1,0){.1}}
\multiput(40,0)(0,2){30}{\line(1,0){.1}}
\multiput(50,0)(0,2){30}{\line(1,0){.1}}
\multiput(60,0)(0,2){30}{\line(1,0){.1}}
\multiput(35,55)(0,0){1}{\vector(-1,-1){15}}
\put(35,55){$d$}
\put(10,30){\tableau{{\varnothing}\\{}\\{}\\{}}} \put(20,0){\tableau{{}}}
\put(-10,50){\tableau{{c}}}
\multiput(0,50)(0,0){1}{\vector(1,-1){15}}
\multiput(5,-10)(0,0){1}{}
\multiput(15,-10)(0,0){1}{}
\end{picture}\hskip 0.05in
\begin{picture}(80,60)
\multiput(0,0)(0,0){1}{\line(1,0){60}}
\multiput(0,0)(0,0){1}{\line(0,1){60}}
\multiput(0,10)(2,0){30}{\line(0,1){.1}}
\multiput(0,20)(2,0){30}{\line(0,1){.1}}
\multiput(0,30)(2,0){30}{\line(0,1){.1}}
\multiput(0,40)(2,0){30}{\line(0,1){.1}}
\multiput(0,50)(2,0){30}{\line(0,1){.1}}
\multiput(0,60)(2,0){30}{\line(0,1){.1}}
\multiput(10,0)(0,2){30}{\line(1,0){.1}}
\multiput(20,0)(0,2){30}{\line(1,0){.1}}
\multiput(30,0)(0,2){30}{\line(1,0){.1}}
\multiput(40,0)(0,2){30}{\line(1,0){.1}}
\multiput(50,0)(0,2){30}{\line(1,0){.1}}
\multiput(60,0)(0,2){30}{\line(1,0){.1}}
\put(10,40){\tableau{{d}\\{}\\{}\\{}\\{}}}
\put(20,0){\tableau{{}}}\put(30,10){\tableau{{}\\{}}}
\put(-10,50){\tableau{{c}}} \multiput(5,-10)(0,0){1}{}
\multiput(15,-10)(0,0){1}{}
\multiput(0,50)(0,0){1}{\line(1,-1){15}}
\multiput(15,45)(0,0){1}{\line(0,-1){10}}
\multiput(15,45)(0,0){1}{\vector(1,-1){15}}
\multiput(24,31)(0,0){1}{$\bullet$}
\end{picture}
\end{center}
\caption{Direct pass, Addition, Deletion and Zigzag pass of a single cell. }\label{fig:cellsleding}
\end{figure}
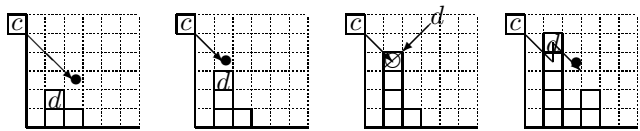

We remark that the only difference between the generalized sliding and the one in \cite{CT1} appears in Condition 3.
 In case $\sd(c,d) = 0$, the algorithm \emph{terminates without a result }(that is, no tower diagram is produced) in the
previous version, whereas in the new one, a tower diagram is produced by removing $d$.  Since the new algorithm is
an extension of the previous one, we still call it the sliding algorithm. We shall see that with this improvement, the new
algorithm has several new applications and allows us to construct new operations.

We denote the tower diagram obtained as the result of sliding the number $i$ (resp. the cell $c$) to $\mathcal T$ by
$i\searrow \mathcal T$ (resp. $c\searrow \mathcal T$). In fact any finite word on  natural numbers $\alpha=\alpha_1\ldots \alpha_k$ can be slid to $\mathcal T$ by the following rule:
$$\alpha \searrow \mathcal T:= \alpha_2\ldots \alpha_k \searrow (\alpha_1 \searrow \mathcal T)=  \alpha_k \searrow \ldots \alpha_2 \searrow \alpha_1 \searrow \mathcal T.
$$
If $\mathcal T$ is the empty diagram then we denote the resulting diagram by $\Tt_\alpha.$

For example, sliding of  $\alpha = 43413$ in to the empty diagram produces $\Tt_{\alpha}$ through the following  sequence of diagrams  displayed in Figure \ref{fig:wordsliding}.
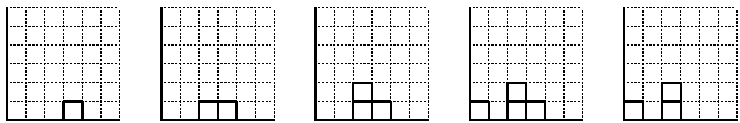
\begin{figure}[h]
\setlength{\unitlength}{0.25mm}
\begin{center}
\begin{picture}(80,60)
\multiput(0,0)(0,0){1}{\line(1,0){60}}
\multiput(0,0)(0,0){1}{\line(0,1){60}}
\multiput(0,10)(2,0){30}{\line(0,1){.1}}
\multiput(0,20)(2,0){30}{\line(0,1){.1}}
\multiput(0,30)(2,0){30}{\line(0,1){.1}}
\multiput(0,40)(2,0){30}{\line(0,1){.1}}
\multiput(0,50)(2,0){30}{\line(0,1){.1}}
\multiput(0,60)(2,0){30}{\line(0,1){.1}}
\multiput(10,0)(0,2){30}{\line(1,0){.1}}
\multiput(20,0)(0,2){30}{\line(1,0){.1}}
\multiput(30,0)(0,2){30}{\line(1,0){.1}}
\multiput(40,0)(0,2){30}{\line(1,0){.1}}
\multiput(50,0)(0,2){30}{\line(1,0){.1}}
\multiput(60,0)(0,2){30}{\line(1,0){.1}} \put(30,0){\tableau{{}}}
\end{picture}\hskip 0.02in
\begin{picture}(80,60)
\multiput(0,0)(0,0){1}{\line(1,0){60}}
\multiput(0,0)(0,0){1}{\line(0,1){60}}
\multiput(0,10)(2,0){30}{\line(0,1){.1}}
\multiput(0,20)(2,0){30}{\line(0,1){.1}}
\multiput(0,30)(2,0){30}{\line(0,1){.1}}
\multiput(0,40)(2,0){30}{\line(0,1){.1}}
\multiput(0,50)(2,0){30}{\line(0,1){.1}}
\multiput(0,60)(2,0){30}{\line(0,1){.1}}
\multiput(10,0)(0,2){30}{\line(1,0){.1}}
\multiput(20,0)(0,2){30}{\line(1,0){.1}}
\multiput(30,0)(0,2){30}{\line(1,0){.1}}
\multiput(40,0)(0,2){30}{\line(1,0){.1}}
\multiput(50,0)(0,2){30}{\line(1,0){.1}}
\multiput(60,0)(0,2){30}{\line(1,0){.1}} \put(30,0){\tableau{{}}}
\put(20,0){\tableau{{}}}
\end{picture}\hskip 0.02in
\begin{picture}(80,60)
\multiput(0,0)(0,0){1}{\line(1,0){60}}
\multiput(0,0)(0,0){1}{\line(0,1){60}}
\multiput(0,10)(2,0){30}{\line(0,1){.1}}
\multiput(0,20)(2,0){30}{\line(0,1){.1}}
\multiput(0,30)(2,0){30}{\line(0,1){.1}}
\multiput(0,40)(2,0){30}{\line(0,1){.1}}
\multiput(0,50)(2,0){30}{\line(0,1){.1}}
\multiput(0,60)(2,0){30}{\line(0,1){.1}}
\multiput(10,0)(0,2){30}{\line(1,0){.1}}
\multiput(20,0)(0,2){30}{\line(1,0){.1}}
\multiput(30,0)(0,2){30}{\line(1,0){.1}}
\multiput(40,0)(0,2){30}{\line(1,0){.1}}
\multiput(50,0)(0,2){30}{\line(1,0){.1}}
\multiput(60,0)(0,2){30}{\line(1,0){.1}}
 \put(30,0){\tableau{{}}}\put(20,0){\tableau{{}}} \put(20,10){\tableau{{}}}
\end{picture}\hskip 0.02in
\begin{picture}(80,60)
\multiput(0,0)(0,0){1}{\line(1,0){60}}
\multiput(0,0)(0,0){1}{\line(0,1){60}}
\multiput(0,10)(2,0){30}{\line(0,1){.1}}
\multiput(0,20)(2,0){30}{\line(0,1){.1}}
\multiput(0,30)(2,0){30}{\line(0,1){.1}}
\multiput(0,40)(2,0){30}{\line(0,1){.1}}
\multiput(0,50)(2,0){30}{\line(0,1){.1}}
\multiput(0,60)(2,0){30}{\line(0,1){.1}}
\multiput(10,0)(0,2){30}{\line(1,0){.1}}
\multiput(20,0)(0,2){30}{\line(1,0){.1}}
\multiput(30,0)(0,2){30}{\line(1,0){.1}}
\multiput(40,0)(0,2){30}{\line(1,0){.1}}
\multiput(50,0)(0,2){30}{\line(1,0){.1}}
\multiput(60,0)(0,2){30}{\line(1,0){.1}}  \put(30,0){\tableau{{}}}\put(20,0){\tableau{{}}} \put(20,10){\tableau{{}}}
\put(0,0){\tableau{{}}}
\end{picture}\hskip 0.02in
\begin{picture}(80,60)
\multiput(0,0)(0,0){1}{\line(1,0){60}}
\multiput(0,0)(0,0){1}{\line(0,1){60}}
\multiput(0,10)(2,0){30}{\line(0,1){.1}}
\multiput(0,20)(2,0){30}{\line(0,1){.1}}
\multiput(0,30)(2,0){30}{\line(0,1){.1}}
\multiput(0,40)(2,0){30}{\line(0,1){.1}}
\multiput(0,50)(2,0){30}{\line(0,1){.1}}
\multiput(0,60)(2,0){30}{\line(0,1){.1}}
\multiput(10,0)(0,2){30}{\line(1,0){.1}}
\multiput(20,0)(0,2){30}{\line(1,0){.1}}
\multiput(30,0)(0,2){30}{\line(1,0){.1}}
\multiput(40,0)(0,2){30}{\line(1,0){.1}}
\multiput(50,0)(0,2){30}{\line(1,0){.1}}
\multiput(60,0)(0,2){30}{\line(1,0){.1}} \put(20,0){\tableau{{}}}
\put(0,0){\tableau{{}}}\put(20,10){\tableau{{}}}
\end{picture}
\caption{Sliding and the tower diagram of the word $\alpha = 43413$. }\label{fig:wordsliding}
\end{center}
\end{figure}

As an immediate application, given tower
diagrams $\mathcal T $ and $\mathcal U= (\mathcal U_1,\mathcal U_2,
\ldots,\mathcal U_m)$, it is now possible to define the product $\mathcal U\searrow\mathcal T$ as  follows. First move
the diagram $\mathcal U$ in the plane so that the right most bottom cell has its east border on  the interval
$[m,m+1]$ on the $y$-axis and has its slide  $m$. Then we slide the cells in $\mathcal U$ to $\mathcal T$, starting from
this cell and continue from bottom to top and right to left.

In the following we explain how finite permutations can be associated to tower diagrams.  A permutation $\omega$ can
be written as a product $\omega=s_{\alpha_1}\ldots s_{\alpha_l}$ of adjacent transpositions $s_{\alpha_1},\ldots,
s_{\alpha_l}$. There are infinitely many different such expressions and  among these, the ones having the
minimum length $l(\omega)$ are called reduced expressions of $\omega$. We say that the word
$\alpha=\alpha_1\ldots\alpha_l$ is associated to $\omega$ if
$$s_{[\alpha]}:=s_{\alpha_1}\ldots s_{\alpha_l}=\omega.$$
If $s_{[\alpha]}$ is a reduced expression, then the word $\alpha$ is called a reduced word.

With respect to the sliding algorithm defined in  \cite{CT1}, we show that the sliding of $\alpha$ terminates with a  result
if and only if $s_{[\alpha]}$ is a reduced word of a permutation. We also showed that for two words $\alpha$ and
$\beta$, we have $s_{[\alpha]}$ and $s_{[\beta]}$ are reduced expressions of the same permutation if and only if
$\Tt_{\alpha}=\Tt_{\beta}$. This result enables us to define the \emph{tower diagram} of a permutation $\omega$ by
 $$ \Tt_{\omega}:=  \Tt_{\alpha}
 $$
when $s_{[\alpha]}$ is a reduced expression of $\omega$.

With the new version of the sliding algorithm, the first result above  can now be restated as:  a deletion never occurs in
the  sliding of $\alpha$ into the empty tower diagram if and only if $s_{[\alpha]}$ is a reduced expression of a
 permutation. Extending the second result above we have the following theorem.

\begin{thm}\label{thm:wordshape}
Let $\alpha$ and $\beta$ be two words on natural numbers. Then
$\Tt_{\alpha}= \Tt_{\beta} $ if and only if $s_{[\alpha]}
= s_{[\beta]}$.
\end{thm}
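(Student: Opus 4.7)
The plan is to deduce both directions of the equivalence from two claims about sliding a single natural number $i$ into the tower diagram $\Tt_\omega$ of a permutation $\omega$: (a) the sliding performs a deletion if and only if $l(\omega s_i)<l(\omega)$; and (b) whenever a deletion occurs, the resulting diagram equals $\Tt_{\omega s_i}$. Granted (a) and (b), one proves by induction on the length $k$ of $\alpha=\alpha_1\cdots\alpha_k$ that $\Tt_\alpha=\Tt_{s_{[\alpha]}}$. The base case is trivial. For the step, write $\omega'=s_{[\alpha_1\cdots\alpha_{k-1}]}$, so that $\Tt_{\alpha_1\cdots\alpha_{k-1}}=\Tt_{\omega'}$ by induction and $\Tt_\alpha=\alpha_k\searrow\Tt_{\omega'}$. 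If $l(\omega's_{\alpha_k})=l(\omega')+1$, appending $\alpha_k$ to a reduced word of $\omega'$ yields a reduced word of $\omega's_{\alpha_k}$, so the result of \cite{CT1} identifies $\Tt_\alpha$ with $\Tt_{\omega's_{\alpha_k}}=\Tt_{s_{[\alpha]}}$; otherwise (a) forces a deletion and (b) gives the same identification. The theorem then follows from the bijectivity of $\omega\mapsto\Tt_\omega$ proved in \cite{CT1}: $\Tt_\alpha=\Tt_\beta$ iff $\Tt_{s_{[\alpha]}}=\Tt_{s_{[\beta]}}$ iff $s_{[\alpha]}=s_{[\beta]}$.

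Claim (a) is a reformulation of the principal result of \cite{CT1}, which states that a word $\gamma$ is reduced precisely when no deletion occurs during its sliding into the empty diagram. Applying this with $\gamma=\alpha_\omega\cdot i$, where $\alpha_\omega$ is a reduced word of $\omega$, and using the compositional identity $\alpha_\omega\cdot i\searrow\varnothing=i\searrow\Tt_\omega$, we obtain (a) at once. For (b) I would establish the following \emph{involutivity}: if sliding $i$ into some diagram $\Tt$ performs a deletion and yields $\Tt'$, then sliding $i$ into $\Tt'$ performs no deletion and recovers $\Tt$. Granted involutivity, set $\omega''=\omega s_i$, so that $l(\omega'')<l(\omega)$ and $l(\omega''s_i)>l(\omega'')$; by (a) and the \cite{CT1} result, $i\searrow\Tt_{\omega''}=\Tt_\omega$ without deletion. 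Involutivity then says that $i\searrow\Tt'=\Tt_\omega$ also holds without deletion, and since non-deletion single-letter sliding is injective on tower diagrams (another consequence of the bijection in \cite{CT1}), we conclude $\Tt'=\Tt_{\omega''}=\Tt_{\omega s_i}$, which is (b).

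The main obstacle is therefore the involutivity, which I expect to prove by direct case analysis on the trajectory of the cell $c$ introduced when $i$ is slid in. The key observation is that the successive direct and zigzag passes of $c$ through the towers strictly to the left of the tower $\Tt_k$ containing the deleted cell $d$ depend only on the top cells of those towers; since those tops are unaffected by removing $d$, the trajectory of $c$ in the reverse sliding is identical up to its arrival at $\Tt_k$, and in particular $c$ arrives with the same current slide. At $\Tt_k$ the new top cell $d'$ sits one level below $d$, so $\slide(d')=\slide(d)-1$; this turns the slide distance from $0$ into $1$ and replaces the deletion by an addition, placing $c$ into precisely the position vacated by $d$ and thereby recovering $\Tt$. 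The one point needing verification is that the zigzag passes before $\Tt_k$ really leave the current slide of $c$ unchanged between the two runs, which is immediate from the invariance of those tops.
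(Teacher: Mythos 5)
Your proposal is correct, and it takes a genuinely different route from the paper's argument. The paper reduces to the case where the deletion occurs at the very last letter, invokes the exchange property of Coxeter groups to produce a reduced word $\gamma_1\cdots\gamma_{n-2}\alpha_n$ of $\omega'$ ending in $\alpha_n$, uses the earlier result of \cite{CT1} that all reduced words of a fixed permutation produce the same tower diagram to identify $\Tt_{\alpha_1\cdots\alpha_{n-1}}$ with $\Tt_{\gamma_1\cdots\gamma_{n-2}\alpha_n}$, and then asserts (without justification) that $\Tt_{\gamma_1\cdots\gamma_{n-2}\alpha_n\alpha_n}=\Tt_{\gamma_1\cdots\gamma_{n-2}}$; this last assertion is precisely the ``addition-then-deletion'' form of your involutivity. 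You instead package the heart of the matter into explicit single-slide statements (a) and (b), prove (b) via the ``deletion-then-addition'' involutivity, and then run a clean induction on the word length. This is arguably more self-contained (it avoids invoking the exchange property as a black box) and, notably, makes explicit a lemma that the paper treats as self-evident. What the paper's route buys is brevity, since the \cite{CT1} facts already encapsulate most of the work and the exchange property is a standard tool.

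One point to attend to in your involutivity argument: you write that at $\Tt_k$ ``the new top cell $d'$ sits one level below $d$,'' but this presupposes that $\Tt_k$ has height at least $2$. If $\Tt_k$ consisted of $d$ alone (height $1$), then $\slide(d)=k$, so the deletion forces $\slide(c)=k$; after the deletion $\Tt'_k$ is empty, and in the reverse slide $c$ arrives at column $k$ at height $0$ with no tower to meet, so it simply lands at $(k,0)$ --- again recovering $\Tt$. So the conclusion survives, but the case should be stated separately since the mechanism (landing on the $x$-axis rather than an addition onto $d'$) is different. You should also record the complementary observation that because the sliding terminates at $\Tt_k$ in both runs (by deletion in one, by addition or landing in the other), the behaviour of $\Tt$ strictly to the right of column $k$, which may differ geometrically in ways not controlled by your ``tops unchanged'' argument, is irrelevant.
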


\begin{proof}
The theorem is true if both of the words $\alpha$ and $\beta$ are
reduced. Thus to prove the
theorem, it is sufficient to prove that for any word $\alpha$ satisfying $s_{[\alpha]}$,  $\Tt_{\alpha} = \Tt_{\omega}$.

Write $\alpha =\alpha_1\alpha_2\ldots\alpha_n$.
Without loss of generality, assume that $n$ is the first index that a deletion occurs in the sliding of $\alpha$, hence the length of $\omega$ is $n-2$.  (Otherwise we change $\alpha$
with the sub word where the first termination occurs, and hence
change $\omega$ with the corresponding permutation.) Thus
the word $\alpha_1\alpha_2\ldots\alpha_{n-1}$ is a reduced word of another permutation, say $\omega'$ and    there is another reduced word of $\omega'$ of the form $$\omega'=s_{[\gamma_1\gamma_2\ldots\gamma_{n-2}\alpha_n]}=s_{[\alpha_1\alpha_2\ldots\alpha_{n-1}]}.$$
Hence $\gamma_1\gamma_2\ldots\gamma_{n-2}\alpha_n$ and $\alpha_1\alpha_2\ldots\alpha_{n-1}$ are braid related and
$\Tt_{\gamma_1\gamma_2\ldots\gamma_{n-2}\alpha_n}=\Tt_{\alpha_1\alpha_2\ldots\alpha_{n-1}}$ since both words
are reduced. Now sliding   one more   $\alpha_n$ gives the same diagrams that is
$$\Tt_{\alpha}=\Tt_{\alpha_1\alpha_2\ldots\alpha_{n-1}\alpha_n}=\Tt_{\gamma_1\gamma_2\ldots\gamma_{n-2}\alpha_n\alpha_n}=\Tt_{\gamma_1\gamma_2\ldots\gamma_{n-2}}.$$

Also note that  $\gamma_1\gamma_2\ldots\gamma_{n-2}$ is a reduced word of $\omega$, since
$\alpha=\alpha_1\alpha_2\ldots\alpha_{n-1}\alpha_n$ and    $\gamma_1\gamma_2\ldots\gamma_{n-2}=\gamma_1\gamma_2\ldots\gamma_{n-2}\alpha_n\alpha_n$ are also braid related.  Therefore
$$\Tt_{\alpha}=\Tt_{\gamma_1\gamma_2\ldots\gamma_{n-2}}=\Tt_{\omega}.
$$

\end{proof}

\begin{cor} For two permutations $\omega$ and $\tau$, $\Tt_\omega \searrow \Tt_\tau = \Tt_{\tau\cdot\omega }$.
\end{cor}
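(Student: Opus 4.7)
The plan is to reduce both sides to the tower diagram of a single word and then invoke Theorem \ref{thm:wordshape} twice. Let $\alpha = \alpha_1\alpha_2\cdots \alpha_\ell$ be the word whose letters are the slides of the cells of $\Tt_\omega$, listed in the order used by the definition of the product (bottom to top within each column, columns scanned right to left), and let $\beta$ be any reduced word for $\tau$.

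I would first unpack the product. By the very definition of $\mathcal U \searrow \mathcal T$, sliding the cells of $\Tt_\omega$ into $\Tt_\tau$ in the prescribed order is literally the same operation as the word sliding $\alpha \searrow \Tt_\tau$. Since $\Tt_\tau = \beta \searrow \varnothing$ and the recursive rule for word sliding gives the associativity identity $\alpha \searrow (\beta \searrow \varnothing) = \beta\alpha \searrow \varnothing$, one obtains
\[
\Tt_\omega \searrow \Tt_\tau \;=\; \alpha \searrow (\beta \searrow \varnothing) \;=\; \beta\alpha \searrow \varnothing \;=\; \Tt_{\beta\alpha}.
\]

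Next I would identify the permutation of $\alpha$. Specialising the same analysis to $\mathcal T = \varnothing$ yields $\alpha \searrow \varnothing = \Tt_\omega \searrow \varnothing = \Tt_\omega$, the last equality being the statement that reading the cells off a tower diagram and sliding them back into the empty diagram in the prescribed order simply reconstructs the diagram. By Theorem \ref{thm:wordshape} this forces $s_{[\alpha]} = \omega$, whence $s_{[\beta\alpha]} = s_{[\beta]}\cdot s_{[\alpha]} = \tau\cdot\omega$. A second application of Theorem \ref{thm:wordshape} then gives $\Tt_{\beta\alpha} = \Tt_{\tau\cdot\omega}$, which together with the previous display concludes the argument.

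The only substantive step is the identity $\Tt_\omega \searrow \varnothing = \Tt_\omega$ --- that the cells of a tower diagram, scanned in the product's reading order and slid back into the empty diagram, land in their original positions. This is essentially the content of the flight/sliding inverse correspondence from \cite{CT1}; granted it, the remainder is pure bookkeeping via Theorem \ref{thm:wordshape}.
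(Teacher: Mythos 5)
Your derivation is correct, and it is the natural way to turn Theorem~\ref{thm:wordshape} into the corollary: read the product as a word-slide, use the recursive definition of word sliding to obtain $\alpha\searrow(\beta\searrow\varnothing)=\beta\alpha\searrow\varnothing$, and then apply Theorem~\ref{thm:wordshape} twice. The paper gives no proof of this corollary, so there is no written argument to compare against, but yours fills in exactly the steps the paper leaves implicit.

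One small imprecision to flag. The identity $\Tt_\omega\searrow\varnothing=\Tt_\omega$ is not quite what the flight/sliding inverse correspondence of \cite{CT1} asserts: that correspondence reconstructs $\Tt_\omega$ from the sequence of \emph{flight numbers} of successively removed corner cells, which is in general a different word from the one you use, namely the \emph{slide}-word scanned bottom-to-top within a column and right-to-left across columns. The identity you actually need is still correct, but it deserves its own one-line argument rather than a citation: proceed by induction on columns from right to left. After processing columns $m,\dots,k+1$ the current diagram is $\Tt_{\ge k+1}$, in which every column of index at most $k$ is empty; sliding the number $k$ therefore lands the new cell at $(k,0)$, and each subsequent slide $k+1,\dots,k+\Tt_k-1$ meets the first nonempty tower $\Tt_k$ with slide distance exactly $1$, hence an Addition that stacks on top. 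This rebuilds column $k$ in place, so the scan reconstructs $\Tt_\omega$ and $s_{[\alpha]}=\omega$ follows from Theorem~\ref{thm:wordshape} as you say. With that patch, the proof is complete.
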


\section{From tower diagrams  to permutations: generalized flight algorithm}\label{sec:flight}

As seen above, the sliding algorithm constructs the tower diagram of the given permutation. We now explain how we obtain the corresponding  permutation from the given tower diagram. We achieve this through the flight algorithm given below in a generalized version.

\begin{defn}
Let $\mathcal T$ be a tower diagram, $c$ be a cell not necessarily contained in $\mathcal T$. We define the flight path,  $\fp(\Tt,c)$,  of $c$ in $\Tt$ recursively as follows: Let $d$ be the west and $e$ be the northwest neighbors of $c$.
$$   \fp(\Tt,c)=\left\{ \begin{aligned}  \fp(\Tt,d)\cup \{c\}, & \,\,  \mbox{  if } d\in \Tt\\
                                             \fp (\Tt,e)\cup \{c\}, & \,\,  \mbox{  if } d\not \in \Tt
                                         \end{aligned} \right.$$
\end{defn}

\begin{ex} Consider the cells $c$, $c^\prime$ and $c^{\prime\prime}$ in Figure \ref{fig:fn}. Their  flight paths are shown by bullets   and circles so that the ones labeled by a bullet always lie in $\Tt$ and the ones labeled by a circle always lie outside of $\Tt$.

\begin{figure}[h]\setlength{\unitlength}{0.25mm}
\begin{center}
\begin{picture}(80,60)
\multiput(0,0)(0,0){1}{\line(1,0){80}}
\multiput(0,0)(0,0){1}{\line(0,1){60}}
\multiput(0,10)(2,0){40}{\line(0,1){.1}}
\multiput(0,20)(2,0){40}{\line(0,1){.1}}
\multiput(0,30)(2,0){40}{\line(0,1){.1}}
\multiput(0,40)(2,0){40}{\line(0,1){.1}}
\multiput(0,50)(2,0){40}{\line(0,1){.1}}
\multiput(0,60)(2,0){40}{\line(0,1){.1}}
\multiput(10,0)(0,2){30}{\line(1,0){.1}}
\multiput(20,0)(0,2){30}{\line(1,0){.1}}
\multiput(30,0)(0,2){30}{\line(1,0){.1}}
\multiput(40,0)(0,2){30}{\line(1,0){.1}}
\multiput(50,0)(0,2){30}{\line(1,0){.1}}
\multiput(60,0)(0,2){30}{\line(1,0){.1}}
\multiput(70,0)(0,2){30}{\line(1,0){.1}}
\multiput(80,0)(0,2){30}{\line(1,0){.1}} \put(3,53){$\circ$}
 \put(10,40){\tableau{{\bullet}\\{}\\{}\\{}\\{}}}
\put(20,40){\tableau{{\bullet}\\{}\\{}\\{}\\{}}}
\put(30,0){\tableau{{}}} \put(40,0){\tableau{{}}}
\put(53,23){$\circ$} \put(43,33){$\circ$}\put(33,43){$\circ$}
 \put(60,30){\tableau{{}\\{}\\{\bullet}\\{}}}
\put(70,10){\tableau{{c}\\{}}}
\end{picture}\hskip.2in
\begin{picture}(80,60)
\multiput(0,0)(0,0){1}{\line(1,0){80}}
\multiput(0,0)(0,0){1}{\line(0,1){60}}
\multiput(0,10)(2,0){40}{\line(0,1){.1}}
\multiput(0,20)(2,0){40}{\line(0,1){.1}}
\multiput(0,30)(2,0){40}{\line(0,1){.1}}
\multiput(0,40)(2,0){40}{\line(0,1){.1}}
\multiput(0,50)(2,0){40}{\line(0,1){.1}}
\multiput(0,60)(2,0){40}{\line(0,1){.1}}
\multiput(10,0)(0,2){30}{\line(1,0){.1}}
\multiput(20,0)(0,2){30}{\line(1,0){.1}}
\multiput(30,0)(0,2){30}{\line(1,0){.1}}
\multiput(40,0)(0,2){30}{\line(1,0){.1}}
\multiput(50,0)(0,2){30}{\line(1,0){.1}}
\multiput(60,0)(0,2){30}{\line(1,0){.1}}
\multiput(70,0)(0,2){30}{\line(1,0){.1}}
\multiput(80,0)(0,2){30}{\line(1,0){.1}} \put(3,53){}
 \put(10,40){\tableau{{}\\{}\\{}\\{\bullet}\\{}}}
\put(20,40){\tableau{{}\\{}\\{}\\{\bullet}\\{}}}
\put(30,0){\tableau{{}}} \put(40,0){\tableau{{}}}
\put(53,23){} \put(43,33){}\put(2,23){$\circ$} \put(32,11){$c\prime$}
 \put(60,30){\tableau{{}\\{}\\{}\\{}}}
\put(70,10){\tableau{{}\\{}}}
\end{picture}
\hskip.2in
\begin{picture}(80,60)
\multiput(0,0)(0,0){1}{\line(1,0){80}}
\multiput(0,0)(0,0){1}{\line(0,1){60}}
\multiput(0,10)(2,0){40}{\line(0,1){.1}}
\multiput(0,20)(2,0){40}{\line(0,1){.1}}
\multiput(0,30)(2,0){40}{\line(0,1){.1}}
\multiput(0,40)(2,0){40}{\line(0,1){.1}}
\multiput(0,50)(2,0){40}{\line(0,1){.1}}
\multiput(0,60)(2,0){40}{\line(0,1){.1}}
\multiput(10,0)(0,2){30}{\line(1,0){.1}}
\multiput(20,0)(0,2){30}{\line(1,0){.1}}
\multiput(30,0)(0,2){30}{\line(1,0){.1}}
\multiput(40,0)(0,2){30}{\line(1,0){.1}}
\multiput(50,0)(0,2){30}{\line(1,0){.1}}
\multiput(60,0)(0,2){30}{\line(1,0){.1}}
\multiput(70,0)(0,2){30}{\line(1,0){.1}}
\multiput(80,0)(0,2){30}{\line(1,0){.1}} \put(3,53){}
 \put(10,40){\tableau{{}\\{}\\{}\\{}\\{\bullet}}}
\put(20,40){\tableau{{}\\{}\\{}\\{}\\{\bullet}}}
\put(30,0){\tableau{c^{\prime \prime}}} \put(40,0){\tableau{{}}}
\put(53,23){} \put(43,33){}\put(2,13){$\circ$}
 \put(60,30){\tableau{{}\\{}\\{}\\{}}}
\put(70,10){\tableau{{}\\{}}}
\end{picture}
\end{center}
\caption{}\label{fig:fn}
\end{figure}
\end{ex}

In the following, we explain how to assign a \emph{ generalized flight number} to any top cell of $\Tt$.
Let $c$ be a top cell and   $c_1,\ldots, c_k=c$ be the cells in the flight path of $c$ ordered from left to right. Now $c_1=(1,j)$    for some  $j\geq 0$ and we define the \emph{flight number} of $c$ to be the slide of $c$, that is
 $$\fn(\Tt,c):=1+j.$$
We now define the \emph{hook number} of $c$ by assigning a sequence  $n_1,\ldots,n_k$  of nonnegative  numbers to   $c_1,\ldots, c_k=c$ in the following manner:  Let $n_k=0$ and for $i=k, \ldots,2$,
$$ n_{i-1}=\left \{ \begin{aligned}
    n_{i}  \,& \,\,\mbox{if $c_{i-1} \not \in\Tt $} \\
    n_{i}  \,& \,\,\mbox{if $c_{i-1} \in\Tt $ and the number of cells above $c_{i-1}$ is greater  than $n_i$.}    \\
  n_i +1 \,& \,\, \mbox{if $c_{i-1} \in\Tt $ and the number of cells above $c_{i-1}$ is less  than or equal to $n_i$.}
   \end{aligned} \right.
   $$
Then the hook number of $c$  is defined to be $\hn(\Tt,c):= n_1+\mathrm{sl}(c_1)$.  Hence the \emph{ generalized flight number} of $c$ in $\Tt$ is defined to be the pair
$$\gfn(c,\Tt):=(\fn(\Tt,c),\hn(\Tt,c)).
$$
Also a top cell in $\Tt$ is called a \emph{ corner cell},  if its hook number and flight number  are equal.

\begin{ex}
For instance in the tower diagrams illustrated in Figure \ref{fig:fn},  $\gfn(\Tt,c)=(6,8)$,  $\gfn(\Tt,c')=(3,3)$ and  $\gfn(\Tt,c'')=(2,2)$, but among them just $c''$ is a corner cell of $\Tt$.
\end{ex}

\begin{rem}
1) Contrary to the one given  in \cite{CT1}, the above definition assigns a flight path to any cell in the first  quadrant, not necessarily
contained in $\mathcal T$.  Also we generalized the notion of flight number to any top cell  (not necessarily a corner cell) by adding also a hook number.  Note that the flight number of the corner cell in the new and the old versions are the same and its hook number is zero.  The reason why we call the second number  as hook number will be clear once we describe hook sliding algorithm.

2) In \cite{CT1}, we have shown how to obtain the Rothe diagram of a permutation from its tower diagram and vice
versa. It is easy to see that under this correspondence, the flight number of a cell in a tower diagram is the row index of
the corresponding cell in the corresponding Rothe diagram. In particular, given a tower diagram $\mathcal T$, the flight numbers of cells in or over a tower in $\mathcal T$ strictly increases from bottom to top.

3)  If $c$ is a corner cell of $\Tt$ then every bullet in the flight path of $c$ must have nonempty tops. Moreover, denoting by
 $\Tt-c$ the diagram obtained by erasing $c$ from $\Tt$, we see that
 $$ \fn(\Tt,c) \searrow (\Tt-c) =\Tt
 $$
as illustrated in the following figure.
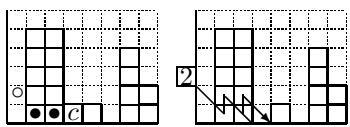
\begin{figure}[h]\setlength{\unitlength}{0.25mm}
\begin{center}
\begin{picture}(80,50)
\multiput(0,0)(0,0){1}{\line(1,0){80}}
\multiput(0,0)(0,0){1}{\line(0,1){60}}
\multiput(0,10)(2,0){40}{\line(0,1){.1}}
\multiput(0,20)(2,0){40}{\line(0,1){.1}}
\multiput(0,30)(2,0){40}{\line(0,1){.1}}
\multiput(0,40)(2,0){40}{\line(0,1){.1}}
\multiput(0,50)(2,0){40}{\line(0,1){.1}}
\multiput(0,60)(2,0){40}{\line(0,1){.1}}
\multiput(10,0)(0,2){30}{\line(1,0){.1}}
\multiput(20,0)(0,2){30}{\line(1,0){.1}}
\multiput(30,0)(0,2){30}{\line(1,0){.1}}
\multiput(40,0)(0,2){30}{\line(1,0){.1}}
\multiput(50,0)(0,2){30}{\line(1,0){.1}}
\multiput(60,0)(0,2){30}{\line(1,0){.1}}
\multiput(70,0)(0,2){30}{\line(1,0){.1}}
\multiput(80,0)(0,2){30}{\line(1,0){.1}} \put(3,53){}
 \put(10,40){\tableau{{}\\{}\\{}\\{}\\{\bullet}}}
\put(20,40){\tableau{{}\\{}\\{}\\{}\\{\bullet}}}
\put(30,0){\tableau{{c}}} \put(40,0){\tableau{{}}}
\put(53,23){} \put(43,33){}\put(2,13){$\circ$}
 \put(60,30){\tableau{{}\\{}\\{}\\{}}}
\put(70,10){\tableau{{}\\{}}}
\end{picture}\hskip.2in
\begin{picture}(80,50)
\multiput(0,0)(0,0){1}{\line(1,0){80}}
\multiput(0,0)(0,0){1}{\line(0,1){60}}
\multiput(0,10)(2,0){40}{\line(0,1){.1}}
\multiput(0,20)(2,0){40}{\line(0,1){.1}}
\multiput(0,30)(2,0){40}{\line(0,1){.1}}
\multiput(0,40)(2,0){40}{\line(0,1){.1}}
\multiput(0,50)(2,0){40}{\line(0,1){.1}}
\multiput(0,60)(2,0){40}{\line(0,1){.1}}
\multiput(10,0)(0,2){30}{\line(1,0){.1}}
\multiput(20,0)(0,2){30}{\line(1,0){.1}}
\multiput(30,0)(0,2){30}{\line(1,0){.1}}
\multiput(40,0)(0,2){30}{\line(1,0){.1}}
\multiput(50,0)(0,2){30}{\line(1,0){.1}}
\multiput(60,0)(0,2){30}{\line(1,0){.1}}
\multiput(70,0)(0,2){30}{\line(1,0){.1}}
\multiput(80,0)(0,2){30}{\line(1,0){.1}} \put(3,53){}
 \put(10,40){\tableau{{}\\{}\\{}\\{}\\{}}}
\put(20,40){\tableau{{}\\{}\\{}\\{}\\{}}}
 \put(40,0){\tableau{{}}}
\put(53,23){} \put(43,33){}
 \put(60,30){\tableau{{}\\{}\\{}\\{}}}
\put(70,10){\tableau{{}\\{}}}
\put(-10,20){\tableau{{2}}}
\multiput(0,20)(0,0){1}{\line(1,-1){15}}
\multiput(15,15)(0,0){1}{\line(1,-1){15}}
\multiput(15,15)(0,0){1}{\line(0,-1){10}}\multiput(25,15)(0,0){1}{\line(0,-1){10}}
\multiput(25,15)(0,0){1}{\vector(1,-1){15}}
\end{picture}
\end{center}
\caption{ Here $c$ is a corner cell in $\Tt$ with  flight number $2$. In fact sliding $2$ into $\Tt-c$ creates $c$ at the end. }
\end{figure}

Hence  starting from a tower diagram $\Tt$ of size $n$ and deleting a corner cell at each time, one can  get a sequence of  tower diagrams
$\Tt=\Tt^l,\ldots, \Tt^0=\varnothing$ of decreasing sizes and a sequence
$\alpha_l,\ldots,\alpha_1$ of positive integers such that $\alpha_i$ is the flight number of the  corner cell chosen  in
$\Tt^i$ and  $\Tt^{i+1}$ is obtained from $\Tt^i$ by erasing that corner cell. Now  we see that the permutation
$$\omega=s_{[\alpha_1,\ldots,\alpha_l]}= s_{\alpha_1}\ldots s_{\alpha_l}
$$
has the tower diagram $\Tt$.
\end{rem}

The above algorithm finds a reduced word of the permutation whose tower diagram is the given one. Below we introduce a new algorithm which determines the one line notation of the permutation directly.

To start with let $n-1$ be the maximum of all the slides of the top cells in $\Tt=(\Tt_1,\ldots,\Tt_n, \ldots, )$.  This means that  all the towers  to the right of
$(n-1)$-th tower  in $\Tt$ are always empty.  Let $x_i$ be the lowest empty cell in the tower $\Tt_i$ for each $i=1,\ldots ,n$  and let
$$f_i=\fn(\Tt, x_i)
$$
that is $f_i$ is the flight number of the empty cell lying on top of $\Tt_i$.   Now we define a function, $\pi_{_\Tt}:\{f_1,\ldots,f_n\} \mapsto \{1,\ldots,n\}$, by the rule
that  $$\pi_{_\Tt}(f_i)=i$$ and we also define $\pi_{_\Tt}$-index of $\Tt$ to be the sequence $(f_1,\ldots,f_n)$. Note that
by the next proposition, this is a well defined permutation in $S_n$.

As an example consider the following tower diagram $\Tt=(0,3,3,1,1,0,1,0)$ with largest slide being $7$.  The flight numbers of the empty cells $x_1,\ldots,
x_8$ in $\Tt$ are respectively   $f_1,\ldots, f_8 =1,6,7,3,4,2,8,5$.
\begin{figure}[h]\setlength{\unitlength}{0.5mm}
\begin{center}
\begin{picture}(80,60)
\multiput(0,0)(0,0){1}{\line(1,0){80}}
\multiput(0,0)(0,0){1}{\line(0,1){60}}
\multiput(0,10)(2,0){40}{\line(0,1){.1}}
\multiput(0,20)(2,0){40}{\line(0,1){.1}}
\multiput(0,30)(2,0){40}{\line(0,1){.1}}
\multiput(0,40)(2,0){40}{\line(0,1){.1}}
\multiput(0,50)(2,0){40}{\line(0,1){.1}}
\multiput(0,60)(2,0){40}{\line(0,1){.1}}
\multiput(10,0)(0,2){30}{\line(1,0){.1}}
\multiput(20,0)(0,2){30}{\line(1,0){.1}}
\multiput(30,0)(0,2){30}{\line(1,0){.1}}
\multiput(40,0)(0,2){30}{\line(1,0){.1}}
\multiput(50,0)(0,2){30}{\line(1,0){.1}}
\multiput(60,0)(0,2){30}{\line(1,0){.1}}
\multiput(70,0)(0,2){30}{\line(1,0){.1}}
\multiput(80,0)(0,2){30}{\line(1,0){.1}} \put(3,53){}
 \put(10,30){\tableaux{{}\\{}\\{}\\{}}}
\put(20,30){\tableaux{{}\\{}\\{}\\{}}}
\put(30,0){\tableaux{{}}} \put(40,0){\tableaux{{}}}
\put(53,23){} \put(43,33){}
\put(60,0){\tableaux{{}}}
\put(1,4){${x_1}$}
\put(11,44){${x_2}$}
\put(21,44){${x_3}$}
\put(31,14){${x_4}$}
\put(41,14){${x_5}$}
\put(51,4){${x_6}$}
\put(61,14){${x_7}$}
\put(71,4){${x_8}$}
\end{picture}
\end{center}
\end{figure}

Hence  $\pi_{_\Tt}= 16458237$ and  $\pi_{_\Tt}$-index of $\Tt$ is $(1,6,7,3,4,2,8,5)$.

\begin{pro} \label{line.notation} Let $\Tt$ be a tower diagram of the permutation  $\omega$. Then $\pi_{_\Tt}$ is a permutation and moreover $\omega=\pi_\Tt$.

\end{pro}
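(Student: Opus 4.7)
The plan is to induct on $|\Tt|$ (the number of cells in $\Tt$, equivalently $\ell(\omega)$), the main combinatorial input being a swap lemma describing how the $f$-sequence changes when a single corner cell is removed. The base case $\Tt = \varnothing$ corresponds to $\omega = e$: every $x_i = (i,0)$ has no west neighbor in $\Tt$, so its flight path consists of northwest steps $(i,0), (i-1,1), \ldots, (1, i-1)$, yielding $f_i = i$. Thus $(f_1, \ldots, f_n) = (1,\ldots,n)$ is a permutation of $\{1,\ldots,n\}$, and $\pi_\Tt = e = \omega$.

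For the inductive step, I would pick a corner cell $c$ of $\Tt$, located in some tower $m$, and set $j := \fn(\Tt, c)$ and $\Tt' := \Tt \setminus \{c\}$. By Remark 3.3(3), $\Tt'$ is the tower diagram of $\omega' := \omega s_j$, with $\ell(\omega') = \ell(\omega) - 1$, so the inductive hypothesis gives that $\pi_{\Tt'}$ is a well-defined permutation of $\{1,\ldots,n\}$ equal to $\omega'$. The whole proposition then reduces to the following \emph{swap lemma}: the sequence $(f_1,\ldots,f_n)$ for $\Tt$ is obtained from the sequence $(f_1',\ldots,f_n')$ for $\Tt'$ by interchanging the two entries whose values are $j$ and $j+1$. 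Granting this, $\pi_\Tt = \pi_{\Tt'} \cdot s_j = \omega' s_j = \omega$, and the well-definedness transfers from $\pi_{\Tt'}$ to $\pi_\Tt$.

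To prove the swap lemma, I would first note that the flight path of any single cell does not depend on whether that cell itself lies in the diagram, so $\fn(\Tt, c) = \fn(\Tt', c) = j$; since $x_m' = c$, this gives $f_m' = j$. In $\Tt$ the cell $x_m$ lies directly above $c$, and a short comparison of the two flight paths together with Remark 3.3(2) yields $f_m = j+1$. For any tower $k \neq m$, the recursive definition of flight path shows that the paths of $x_k$ in $\Tt$ and in $\Tt'$ can only diverge upon reaching a cell whose west neighbor is $c$, i.e., the cell immediately east of $c$. Using the corner-cell equality $\fn(\Tt,c) = \hn(\Tt,c) = j$, one checks that exactly one index $m^* \neq m$ has its flight path so rerouted, with $f_{m^*}' = j+1$ and $f_{m^*} = j$. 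The principal obstacle is exactly this case analysis: identifying $m^*$, ruling out further reroutings of other paths, and pinning down that the exchanged values are precisely $j$ and $j+1$ requires a careful geometric argument on flight paths near $c$, and it is here that the corner-cell hypothesis is essential.
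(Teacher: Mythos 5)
Your overall strategy --- induct on the number of cells in $\Tt$, delete a corner cell $c$ to pass to $\Tt' := \Tt - c = \Tt_{\omega s_j}$ (using item (3) of the remark in Section 3), and prove a swap lemma that the sequence $(f_1, \dots, f_n)$ for $\Tt$ is obtained from that of $\Tt'$ by exchanging the two entries equal to $j$ and $j+1$ --- is the paper's strategy, and your computations that $f_m' = j$ and $f_m = j+1$ agree with the paper's. The substantive difference is the choice of corner cell: the paper takes $c$ to be the top cell of the \emph{first} nonempty tower $\Tt_l$, so that every tower to its left is empty. This is not cosmetic. With towers $1, \dots, l-1$ empty, the $\pi_{\Tt'}$-index begins $(1, \dots, l-1, i, \dots)$, which forces the value $i+1$ to sit at some index $r > l$; and since flight paths through the empty columns $1, \dots, l-1$ are forced diagonals, the portion of the flight path of $x_r$ in columns $\le l$ is the diagonal of slide $i+1$, which passes one step above the deleted cell and hence must have entered column $l$ from the cell immediately to its east. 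That is a short explicit verification, unavailable once $c$ sits in an arbitrary tower.

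In your more general setting this is precisely where the gap lies, and you flag it honestly but do not close it. After your observations, what remains is to show that \emph{some} index $m^* \ne m$ has its flight path in $\Tt'$ passing through the cell $c^+$ just east of $c$; uniqueness is then automatic from the inductive hypothesis, since any such index satisfies $f_{m^*}' = j+1$ and $\pi_{\Tt'}$ is a permutation. But existence is not a consequence of the inductive hypothesis alone: it gives a unique $m^*$ with $f_{m^*}' = j+1$, yet says nothing about whether the flight path of $x_{m^*}$ reaches $c^+$; and if it did not, you would have $f_{m^*} = f_{m^*}' = j+1 = f_m$, so $\pi_\Tt$ would fail to be a permutation. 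Filling this in needs an extra structural fact about flight paths (for instance, that two flight paths in the same diagram ending at the same cell must coincide on all common columns, so that the path of $x_{m^*}$ is forced to contain the path of the cell directly above $c$, and hence to pass through $c^+$), and this is exactly what you defer as ``a careful geometric argument on flight paths near $c$.'' As written, the swap lemma --- and with it the proposition --- is asserted but not proved.
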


\begin{proof} It is easy to check that the statement of the theorem is true when $\Tt$ is the empty diagram or it has just one cell. By induction assume that for all diagram of size $k-1$ the statement is true. Let $\Tt$ be a tower diagram of size $k$, $\Tt_l$ be the first nonempty tower and  $d$ be the top cell of  $\Tt_l$, with flight number  $i$. Observe that, in this case, $d$ is a corner cell.

Now  let $\mathcal U$ be a tower diagram  obtained by  erasing $d$ from $\Tt$ and $u$ be the corresponding permutation. By induction we can assume that $\pi_{_\mathcal U}=u$.
On the other hand we have
 $$\omega=u\cdot s_i=\pi_{_\mathcal U}\cdot s_i.$$
  Hence the theorem will be proved once we show that  $\pi_{_\Tt} = \pi_{_\mathcal U}\cdot s_i$.

Consider the following figure which illustrates the  first  $l$ towers of $\mathcal U$ and   $\mathcal T$. In the diagram of  $\mathcal U$,   the stars represent the cells in the flight path of $x_l$. Since the  flight number of $x_l$ is $i$ and since $\pi_{_\mathcal U}=u$ is a permutation we see that  $\pi_{_\mathcal U}$-index of  $\mathcal U$ is of the form $(1,\ldots,l-1,i,\ldots,i+1,\ldots,)$. Therefore, for some $r>l$, there must be an empty cell $x_r$ lying on top of its $r$-th tower, with flight number $i+1$.  Moreover, as illustrated by circles,  the flight path of $x_r$ passes through the cell on the right of $x_l$.
\begin{figure}[h]\setlength{\unitlength}{0.25mm}
	\begin{center}
		\begin{picture}(80,50)
		\multiput(0,0)(0,0){1}{\line(1,0){80}}
		\multiput(0,0)(0,0){1}{\line(0,1){60}}
		\multiput(0,10)(2,0){40}{\line(0,1){.1}}
		\multiput(0,20)(2,0){40}{\line(0,1){.1}}
		\multiput(0,30)(2,0){40}{\line(0,1){.1}}
		\multiput(0,40)(2,0){40}{\line(0,1){.1}}
		\multiput(0,50)(2,0){40}{\line(0,1){.1}}
		\multiput(0,60)(2,0){40}{\line(0,1){.1}}
		\multiput(10,0)(0,2){30}{\line(1,0){.1}}
		\multiput(20,0)(0,2){30}{\line(1,0){.1}}
		\multiput(30,0)(0,2){30}{\line(1,0){.1}}
		\multiput(40,0)(0,2){30}{\line(1,0){.1}}
		\multiput(50,0)(0,2){30}{\line(1,0){.1}}
		\multiput(60,0)(0,2){30}{\line(1,0){.1}}
		\multiput(70,0)(0,2){30}{\line(1,0){.1}}
		\multiput(80,0)(0,2){30}{\line(1,0){.1}} \put(-25,23){$\mathcal U =$}
		\put(30,0){\tableau{{}}}
		\put(1,4){$_{x_1}$}
		\put(11,4){$\ldots$}
		\put(31,14){$_{x_l}$}
		  \put(2,52){$\circ$} \put(12,42){$\circ$}\put(22,32){$\circ$} \put(32,22){$\circ$}\put(42,12){$\circ$}	
		  \put(1,42){$\star$} \put(11,32){$\star$}\put(21,22){$\star$}
		\end{picture}\hskip.5in
			\begin{picture}(80,50)
			\multiput(0,0)(0,0){1}{\line(1,0){80}}
			\multiput(0,0)(0,0){1}{\line(0,1){60}}
			\multiput(0,10)(2,0){40}{\line(0,1){.1}}
			\multiput(0,20)(2,0){40}{\line(0,1){.1}}
			\multiput(0,30)(2,0){40}{\line(0,1){.1}}
			\multiput(0,40)(2,0){40}{\line(0,1){.1}}
			\multiput(0,50)(2,0){40}{\line(0,1){.1}}
			\multiput(0,60)(2,0){40}{\line(0,1){.1}}
			\multiput(10,0)(0,2){30}{\line(1,0){.1}}
			\multiput(20,0)(0,2){30}{\line(1,0){.1}}
			\multiput(30,0)(0,2){30}{\line(1,0){.1}}
			\multiput(40,0)(0,2){30}{\line(1,0){.1}}
			\multiput(50,0)(0,2){30}{\line(1,0){.1}}
			\multiput(60,0)(0,2){30}{\line(1,0){.1}}
			\multiput(70,0)(0,2){30}{\line(1,0){.1}}
			\multiput(80,0)(0,2){30}{\line(1,0){.1}} \put(-25,23){$\mathcal T =$}
			\put(30,10){\tableau{{}\\{}}}
			\put(1,4){$_{x_1}$}
			\put(11,4){$\ldots$}
			\put(31,24){$_{x_l}$} \put(1,52){$\star$} \put(11,42){$\star$}\put(21,32){$\star$}
			\put(32,12){$\bullet$}
			\put(22,22){$\circ$}\put(12,32){$\circ$}\put(2,42){$\circ$}
			\put(42,12){$\circ$}
			\multiput(60,40)(0,0){1}{\vector(-1,-1){23}} \put(62,42){$d$}
			\end{picture}
	\end{center}
\end{figure}
Recall that the diagram of $\Tt$ is obtained by adding the cell $d$ to the $l$-th tower of  $\mathcal U$, so that $x_l$ is shifted up and all other empty cells lying on top of other towers remains same. In this case  the flight number of $x_l$ in $\Tt$ becomes  $i+1$. On the other hand, as illustrated in the above figure,  the flight path of $x_r$ in $\Tt$  contains  $d$ and hence  its new  flight number becomes $i$. It is easy to observe that flight numbers of  the remaining empty cell lying on top of towers do not change. Hence   $\pi_{_\mathcal T}$-index of $\Tt$ is obtained by interchanging $i$ and $i+1$ in  $\pi_{_\mathcal U}$-index of $\mathcal U$. This shows that $\pi_{_\mathcal T}= \pi_{_\mathcal U}\cdot s_i$ as required.

\end{proof}

 \section{Sliding  a transposition}\label{sec:slidetrans}

In this section, we describe the basic operations that could appear in the sliding of a transposition into a given tower
diagram. As remarked earlier, if $\omega$ is any permutation and
$t$ is any transposition, then these operations determine the tower diagram of the product $\omega\cdot t$
as a modification of the tower diagram of $\omega$.

Let $j\geq i>0$ be integers. We write $t_{i, j+1}$ for the transposition interchanging $i$ and $j+1$. A reduced expression of
 the transposition $t_{i,j+1}$ can be given as $s_{j}\ldots s_{i+1} s_{i}s_{i+1}\ldots s_{j}$.  Therefore the tower diagram of a transposition $t_{i,j+1}$ has  a
\textbf{\textit{hook shape}} that we denote by $\fh_{i,j}$ whose \textbf{\emph{heel}} has slide  $i$ and whose cells in the
\textbf{\emph{foot}} and in the \textbf{\emph{leg}} have slides from $i+1$ to $j$.

It turns out that there are five basic operations in the sliding of a transposition on an arbitrary tower diagram $\Tt$. To determine these basic operations, let $\mathcal T_l$ be the first nonempty tower  in $\Tt$ and let $t$ be the slide of its top cell.

 \textbf{1. Direct-pass:}  If $ t+1<i\leq j$ then  the hook   $\fh_{i,j}$  has no intersection with $\Tt_l$, and it continues its sliding
 on $\Tt_{>l}$  with the hook $\fh_{i,j}$ subject to the  conditions given in this list.

\textbf{2. Broken(+) pass:} If $t+1=i\leq j$, then the foot passes $\mathcal T_l$ directly, and the heel and the leg
sits on it, creating a new tower denoted by $\mathcal T_l'$ and its foot continues its sliding on $\Tt_{>l}$.  See the following  figure.
Observe that, sliding of the foot on $\Tt_{>l}$
 can erase at most $j-i$ cells. Therefore, a broken(+) pass always adds at least one cell to the given tower diagram.

 \begin{figure}[h]\setlength{\unitlength}{0.25mm}
\begin{center}
\begin{picture}(80,100)
\multiput(0,0)(0,0){1}{\line(1,0){80}}
\multiput(0,0)(0,0){1}{\line(0,1){60}}
\multiput(0,10)(2,0){40}{\line(0,1){.1}}
\multiput(0,20)(2,0){40}{\line(0,1){.1}}
\multiput(0,30)(2,0){40}{\line(0,1){.1}}
\multiput(0,40)(2,0){40}{\line(0,1){.1}}
\multiput(0,50)(2,0){40}{\line(0,1){.1}}
\multiput(0,60)(2,0){40}{\line(0,1){.1}}
\multiput(10,0)(0,2){30}{\line(1,0){.1}}
\multiput(20,0)(0,2){30}{\line(1,0){.1}}
\multiput(30,0)(0,2){30}{\line(1,0){.1}}
\multiput(40,0)(0,2){30}{\line(1,0){.1}}
\multiput(50,0)(0,2){30}{\line(1,0){.1}}
\multiput(60,0)(0,2){30}{\line(1,0){.1}}
\multiput(70,0)(0,2){30}{\line(1,0){.1}}
\multiput(80,0)(0,2){30}{\line(1,0){.1}}
\put(20,30){\tableau{{}\\{}\\{}\\{}}}
\put(20,-10){$_{\mathcal{T}_{_{l}}}$}\put(30,-10){$_{\mathcal{T}_{_{l+1}}}$}
\put(55,-10){$\ldots$} \put(70,-10){$_{\mathcal{T}_{_{n}}}$}
 \put(-10,90){\tableau{{}\\{}\\{}}}
\put(0,70){\tableau{{}}} \put(10,70){\tableau{{}}}
\multiput(-5,75)(0,0){1}{\vector(1,-1){28}} \put(-37,47){$t+1=i$}
\multiput(5,75)(0,0){1}{\vector(1,-1){38}}
\multiput(15,75)(0,0){1}{\vector(1,-1){38}}\put(47,47){$j$}
\put(24,42){$\bullet$}
 \put(44,32){$\bullet$}
\put(54,32){$\bullet$}
\end{picture}
\hskip.4in
\begin{picture}(80,100)
\multiput(0,0)(0,0){1}{\line(1,0){80}}
\multiput(0,0)(0,0){1}{\line(0,1){60}}
\multiput(0,10)(2,0){40}{\line(0,1){.1}}
\multiput(0,20)(2,0){40}{\line(0,1){.1}}
\multiput(0,30)(2,0){40}{\line(0,1){.1}}
\multiput(0,40)(2,0){40}{\line(0,1){.1}}
\multiput(0,50)(2,0){40}{\line(0,1){.1}}
\multiput(0,60)(2,0){40}{\line(0,1){.1}}
\multiput(10,0)(0,2){30}{\line(1,0){.1}}
\multiput(20,0)(0,2){30}{\line(1,0){.1}}
\multiput(30,0)(0,2){30}{\line(1,0){.1}}
\multiput(40,0)(0,2){30}{\line(1,0){.1}}
\multiput(50,0)(0,2){30}{\line(1,0){.1}}
\multiput(60,0)(0,2){30}{\line(1,0){.1}}
\multiput(70,0)(0,2){30}{\line(1,0){.1}}
\multiput(80,0)(0,2){30}{\line(1,0){.1}}
\put(20,30){\tableau{{}\\{}\\{}\\{}}}
 \put(20,-10){$_{\mathcal{T}_{_{l}}}$}
 \put(-10,90){\tableau{{}\\{}}}
\multiput(-5,85)(0,0){1}{\vector(1,-1){28}}
\multiput(-5,95)(0,0){1}{\vector(1,-1){28}}
\put(30,-10){$_{\mathcal{T}'_{_{l+1}}}$} \put(55,-10){$\ldots$}
\put(70,-10){$_{\mathcal{T}'_{_{n}}}$}
\put(44,32){$\bullet$}\put(54,32){$\bullet$}
\put(24,62){$\bullet$}\put(24,52){$\bullet$}\put(24,42){$\bullet$}
\end{picture}
\hskip.2in
\begin{picture}(80,100)
\multiput(0,0)(0,0){1}{\line(1,0){80}}
\multiput(0,0)(0,0){1}{\line(0,1){60}}
\multiput(0,10)(2,0){40}{\line(0,1){.1}}
\multiput(0,20)(2,0){40}{\line(0,1){.1}}
\multiput(0,30)(2,0){40}{\line(0,1){.1}}
\multiput(0,40)(2,0){40}{\line(0,1){.1}}
\multiput(0,50)(2,0){40}{\line(0,1){.1}}
\multiput(0,60)(2,0){40}{\line(0,1){.1}}
\multiput(10,0)(0,2){30}{\line(1,0){.1}}
\multiput(20,0)(0,2){30}{\line(1,0){.1}}
\multiput(30,0)(0,2){30}{\line(1,0){.1}}
\multiput(40,0)(0,2){30}{\line(1,0){.1}}
\multiput(50,0)(0,2){30}{\line(1,0){.1}}
\multiput(60,0)(0,2){30}{\line(1,0){.1}}
\multiput(70,0)(0,2){30}{\line(1,0){.1}}
\multiput(80,0)(0,2){30}{\line(1,0){.1}}
\put(20,60){\tableau{{}\\{}\\{}\\{}\\{}\\{}\\{}}}
\put(20,-10){$_{\mathcal{T}'_{_{l}}}$}
\put(30,-10){$_{\mathcal{T}'_{_{l+1}}}$} \put(55,-10){$\ldots$}
\put(70,-10){$_{\mathcal{T}'_{_{n}}}$}
\put(44,32){$\bullet$}\put(54,32){$\bullet$}
\end{picture}
\end{center}
\label{fig:b+p-t}
\end{figure}

\textbf{3.} \textbf{Shrunken pass:} If $i\leq t < j$, then as illustrated
in the following figure, there is a cell $c$ in the foot (and another one $c'$ in the leg) with slide equal to $t+1$.  As
illustrated in the figure, while the tower $\Tt_{l}$ remains unchanged, the hook $\fh_{i,j}$ passes it in a way that it
loses  $c$ and $c^\prime $ and  its heel has new slide $i+1$. Therefore sliding continues on $\Tt_{>l}$
with the new hook $\fh_{i+1,j}$.

\begin{figure}[h]\setlength{\unitlength}{0.25mm}
\begin{center}
\begin{picture}(80,100)
\multiput(0,0)(0,0){1}{\line(1,0){80}}
\multiput(0,0)(0,0){1}{\line(0,1){50}}
\multiput(0,10)(2,0){40}{\line(0,1){.1}}
\multiput(0,20)(2,0){40}{\line(0,1){.1}}
\multiput(0,30)(2,0){40}{\line(0,1){.1}}
\multiput(0,40)(2,0){40}{\line(0,1){.1}}
\multiput(0,50)(2,0){40}{\line(0,1){.1}}
\multiput(10,0)(0,2){25}{\line(1,0){.1}}
\multiput(20,0)(0,2){25}{\line(1,0){.1}}
\multiput(30,0)(0,2){25}{\line(1,0){.1}}
\multiput(40,0)(0,2){25}{\line(1,0){.1}}
\multiput(50,0)(0,2){25}{\line(1,0){.1}}
\multiput(60,0)(0,2){25}{\line(1,0){.1}}
\multiput(70,0)(0,2){25}{\line(1,0){.1}}
\multiput(80,0)(0,2){25}{\line(1,0){.1}}
\put(20,30){\tableau{{}\\{}\\{}\\{}}}
 \put(20,-10){$_{\mathcal{T}_{_{l}}}$}
 \put(-20,90){\tableau{{}\\{c'}}}
 \put(-20,70){\tableau{{}}}
\put(-10,70){\tableau{{c}}}
\put(0,70){\tableau{{}}}
\multiput(-5,75)(0,0){1}{\vector(1,-1){30}}
\multiput(5,75)(0,0){1}{\vector(1,-1){60}}
 \put(25,40){$\bullet$}
\put(65,10){$\bullet$}
\multiput(-15,75)(0,0){1}{\line(1,-1){40}} \put(-30,52){$t\geq i$}
\multiput(25,35)(0,0){1}{\line(0,1){10}}
\multiput(25,45)(0,0){1}{\vector(1,-1){30}}\put(28,52){$j$}
\put(55,10){$\bullet$}
\put(30,-10){$_{\mathcal{T}_{_{l+1}}}$} \put(55,-10){$\ldots$}
\put(70,-10){$_{\mathcal{T}_{_{n}}}$}
\end{picture}\hskip.6in
\begin{picture}(80,100)
\multiput(0,0)(0,0){1}{\line(1,0){80}}
\multiput(0,0)(0,0){1}{\line(0,1){50}}
\multiput(0,10)(2,0){40}{\line(0,1){.1}}
\multiput(0,20)(2,0){40}{\line(0,1){.1}}
\multiput(0,30)(2,0){40}{\line(0,1){.1}}
\multiput(0,40)(2,0){40}{\line(0,1){.1}}
\multiput(0,50)(2,0){40}{\line(0,1){.1}}
\multiput(10,0)(0,2){25}{\line(1,0){.1}}
\multiput(20,0)(0,2){25}{\line(1,0){.1}}
\multiput(30,0)(0,2){25}{\line(1,0){.1}}
\multiput(40,0)(0,2){25}{\line(1,0){.1}}
\multiput(50,0)(0,2){25}{\line(1,0){.1}}
\multiput(60,0)(0,2){25}{\line(1,0){.1}}
\multiput(70,0)(0,2){25}{\line(1,0){.1}}
\multiput(80,0)(0,2){25}{\line(1,0){.1}}
\put(20,40){\tableau{{\varnothing}\\{}\\{}\\{}\\{}}}
 \put(20,-10){$_{\mathcal{T}_{_{l}}}$}
 \put(-20,90){\tableau{{}\\{c'}}}
\multiput(-15,85)(0,0){1}{\vector(1,-1){40}}
\multiput(-15,95)(0,0){1}{\vector(1,-1){70}}
\put(55,10){$\bullet$}
\put(65,10){$\bullet$}
\put(55,20){$\bullet$}
\put(30,-10){$_{\mathcal{T}_{_{l+1}}}$} \put(55,-10){$\ldots$}
\put(70,-10){$_{\mathcal{T}_{_{n}}}$}
\end{picture}\hskip.6in
\end{center}
\label{fig:sp-1}
\end{figure}

 \textbf{4. Broken(-)pass:} If $i\leq j=t$,  then the foot and the heel of $\fh_{i,j}$  deletes top $j-i+1 $ cells of
$\mathcal T_l$ and then its leg passes directly and  continues it sliding on $\mathcal T_{>l}$ as
illustrated in the following figure.

\begin{figure}[h]\setlength{\unitlength}{0.25mm}
\begin{center}
\begin{picture}(80,100)
\multiput(0,0)(0,0){1}{\line(1,0){80}}
\multiput(0,0)(0,0){1}{\line(0,1){60}}
\multiput(0,10)(2,0){40}{\line(0,1){.1}}
\multiput(0,20)(2,0){40}{\line(0,1){.1}}
\multiput(0,30)(2,0){40}{\line(0,1){.1}}
\multiput(0,40)(2,0){40}{\line(0,1){.1}}
\multiput(0,50)(2,0){40}{\line(0,1){.1}}
\multiput(0,60)(2,0){40}{\line(0,1){.1}}
\multiput(10,0)(0,2){30}{\line(1,0){.1}}
\multiput(20,0)(0,2){30}{\line(1,0){.1}}
\multiput(30,0)(0,2){30}{\line(1,0){.1}}
\multiput(40,0)(0,2){30}{\line(1,0){.1}}
\multiput(50,0)(0,2){30}{\line(1,0){.1}}
\multiput(60,0)(0,2){30}{\line(1,0){.1}}
\multiput(70,0)(0,2){30}{\line(1,0){.1}}
\multiput(80,0)(0,2){30}{\line(1,0){.1}}
\put(20,30){\tableau{{\varnothing
}\\{\varnothing}\\{\varnothing}\\{}}}
\put(20,-10){$_{\mathcal{T}_{_{l}}}$}\put(30,-10){$_{\mathcal{T}_{_{l+1}}}$}
\put(55,-10){$\ldots$} \put(70,-10){$_{\mathcal{T}_{_{n}}}$}
 \put(-40,90){\tableau{{}\\{}\\{}}}
\put(-30,70){\tableau{{}}} \put(-20,70){\tableau{{}}}
\multiput(-35,75)(0,0){1}{\vector(1,-1){60}}\put(-25,50){$i$}
\multiput(-25,75)(0,0){1}{\vector(1,-1){50}}
\multiput(-15,75)(0,0){1}{\vector(1,-1){40}}\put(15,50){$j=t$}
\end{picture}
\hskip.3in
\begin{picture}(80,100)
\multiput(0,0)(0,0){1}{\line(1,0){80}}
\multiput(0,0)(0,0){1}{\line(0,1){60}}
\multiput(0,10)(2,0){40}{\line(0,1){.1}}
\multiput(0,20)(2,0){40}{\line(0,1){.1}}
\multiput(0,30)(2,0){40}{\line(0,1){.1}}
\multiput(0,40)(2,0){40}{\line(0,1){.1}}
\multiput(0,50)(2,0){40}{\line(0,1){.1}}
\multiput(0,60)(2,0){40}{\line(0,1){.1}}
\multiput(10,0)(0,2){30}{\line(1,0){.1}}
\multiput(20,0)(0,2){30}{\line(1,0){.1}}
\multiput(30,0)(0,2){30}{\line(1,0){.1}}
\multiput(40,0)(0,2){30}{\line(1,0){.1}}
\multiput(50,0)(0,2){30}{\line(1,0){.1}}
\multiput(60,0)(0,2){30}{\line(1,0){.1}}
\multiput(70,0)(0,2){30}{\line(1,0){.1}}
\multiput(80,0)(0,2){30}{\line(1,0){.1}} \put(20,0){\tableau{{}}}
 \put(20,-10){$_{\mathcal{T}'_{_{l}}}$}
 \put(-40,90){\tableau{{}\\{}}}
\multiput(-35,85)(0,0){1}{\vector(1,-1){65}}
\multiput(-35,95)(0,0){1}{\vector(1,-1){65}}
 \put(30,-10){$_{\mathcal{T}_{_{l+1}}}$}
\put(55,-10){$\ldots$} \put(70,-10){$_{\mathcal{T}_{_{n}}}$}
\put(33,10){$\bullet$}
\put(33,20){$\bullet$}
\end{picture}
\end{center}
\label{fig:b-pl}
\end{figure}

\textbf{5.} \textbf{Zigzag pass:} If $i\leq j< t$, then each cell of the hook passes the tower $\mathcal T_l$ with a zigzag.
Therefore the sliding continues on $\mathcal T_{>l}$ with a new hook $\fh_{i+1,j+1}$. See the following figure.

\begin{figure}[h]\setlength{\unitlength}{0.25mm}
\begin{center}
\begin{picture}(80,100)
\multiput(0,0)(0,0){1}{\line(1,0){80}}
\multiput(0,0)(0,0){1}{\line(0,1){60}}
\multiput(0,10)(2,0){40}{\line(0,1){.1}}
\multiput(0,20)(2,0){40}{\line(0,1){.1}}
\multiput(0,30)(2,0){40}{\line(0,1){.1}}
\multiput(0,40)(2,0){40}{\line(0,1){.1}}
\multiput(0,50)(2,0){40}{\line(0,1){.1}}
\multiput(0,60)(2,0){40}{\line(0,1){.1}}
\multiput(10,0)(0,2){30}{\line(1,0){.1}}
\multiput(20,0)(0,2){30}{\line(1,0){.1}}
\multiput(30,0)(0,2){30}{\line(1,0){.1}}
\multiput(40,0)(0,2){30}{\line(1,0){.1}}
\multiput(50,0)(0,2){30}{\line(1,0){.1}}
\multiput(60,0)(0,2){30}{\line(1,0){.1}}
\multiput(70,0)(0,2){30}{\line(1,0){.1}}
\multiput(80,0)(0,2){30}{\line(1,0){.1}}
\put(20,40){\tableau{{}\\{}\\{}\\{}\\{}}} \put(10,0){\tableau{{}}}
\put(20,-10){$_{\mathcal{T}_{_{l}}}$}\put(30,-10){$_{\mathcal{T}_{_{l+1}}}$}
\put(55,-10){$\ldots$} \put(70,-10){$_{\mathcal{T}_{_{n}}}$}
 \put(-30,80){\tableau{{}\\{}}}
\put(-20,70){\tableau{{}}}
\multiput(-25,75)(0,0){1}{\line(1,-1){50}}\put(-15,50){$i$}
\multiput(25,25)(0,0){1}{\line(0,1){10}}
\multiput(25,35)(0,0){1}{\vector(1,-1){19}}
\multiput(-15,75)(0,0){1}{\line(1,-1){40}}\put(12,53){$j<t$}
\multiput(25,35)(0,0){1}{\line(0,1){10}}
\multiput(25,45)(0,0){1}{\vector(1,-1){29}}\put(45,12){$\bullet$}\put(55,12){$\bullet$}
\end{picture}\hskip.6in
\begin{picture}(80,100)
\multiput(0,0)(0,0){1}{\line(1,0){80}}
\multiput(0,0)(0,0){1}{\line(0,1){60}}
\multiput(0,10)(2,0){40}{\line(0,1){.1}}
\multiput(0,20)(2,0){40}{\line(0,1){.1}}
\multiput(0,30)(2,0){40}{\line(0,1){.1}}
\multiput(0,40)(2,0){40}{\line(0,1){.1}}
\multiput(0,50)(2,0){40}{\line(0,1){.1}}
\multiput(0,60)(2,0){40}{\line(0,1){.1}}
\multiput(10,0)(0,2){30}{\line(1,0){.1}}
\multiput(20,0)(0,2){30}{\line(1,0){.1}}
\multiput(30,0)(0,2){30}{\line(1,0){.1}}
\multiput(40,0)(0,2){30}{\line(1,0){.1}}
\multiput(50,0)(0,2){30}{\line(1,0){.1}}
\multiput(60,0)(0,2){30}{\line(1,0){.1}}
\multiput(70,0)(0,2){30}{\line(1,0){.1}}
\multiput(80,0)(0,2){30}{\line(1,0){.1}}
\put(20,40){\tableau{{}\\{}\\{}\\{}\\{}}} \put(10,0){\tableau{{}}}
\put(20,-10){$_{\mathcal{T}_{_{l}}}$}\put(30,-10){$_{\mathcal{T}_{_{l+1}}}$}
\put(55,-10){$\ldots$} \put(70,-10){$_{\mathcal{T}_{_{n}}}$}
 \put(-30,80){\tableau{{}}}
\multiput(-25,85)(0,0){1}{\line(1,-1){50}}
\multiput(25,35)(0,0){1}{\line(0,1){10}}
\multiput(25,45)(0,0){1}{\vector(1,-1){19}}
\put(45,12){$\bullet$}\put(55,12){$\bullet$}\put(45,22){$\bullet$}
\end{picture}
\end{center}
\label{fig:zzhook}
\end{figure}

Now, we are ready to unearth the relation between  the generalized  flight number and the sliding of transpositions.

\begin{pro}\label{pro:slideflight}
Let $c$ be a top cell in  a tower diagram $\mathcal T$  with the generalized  flight number  $(i,j)$.  Then
$$ \fh_{i,j} \searrow (\mathcal T - c)= \mathcal T.
$$
where $\fh_{i,j}$ is the hook corresponding to the transposition $t_{i,j+1}$ and $\mathcal T - c$ be the tower
diagram obtained by erasing $c$ from $\Tt$.
Moreover if $\omega$ is a permutation with $\Tt_\omega=\mathcal T- c$ then  $\Tt_{\omega\cdot t_{i,j+1}}=\Tt$.
\end{pro}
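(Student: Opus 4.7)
The second assertion follows from the first via the corollary of Theorem~\ref{thm:wordshape}: since $\fh_{i,j}=\Tt_{t_{i,j+1}}$, that corollary gives $\fh_{i,j}\searrow\Tt_{\omega}=\Tt_{\omega\cdot t_{i,j+1}}$, so substituting $\Tt_{\omega}=\Tt-c$ and using the first assertion yields $\Tt_{\omega\cdot t_{i,j+1}}=\Tt$.

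For the first assertion, my plan is to induct on the length $k$ of the flight path $c_1,c_2,\ldots,c_k=c$. The guiding principle is that the generalized flight number has been engineered precisely so as to record, in reverse order, the sequence of basic operations (among the five listed earlier in this section) performed by the hook $\fh_{i,j}$ as it traverses $\Tt-c$. Concretely, the three branches of the recursion producing $n_{s-1}$ from $n_s$ correspond to three of the five basic operations: when $c_{s-1}\notin\Tt$ the hook executes a direct pass over the tower in $c_{s-1}$'s column (and $n$ is unchanged); when $c_{s-1}\in\Tt$ has strictly more than $n_s$ cells of $\Tt$ above it, the tower is tall enough that the entire hook passes via a zigzag, preserving the length $j-i$ while lifting the heel slide by $1$ (so $n_{s-1}=n_s$); and when $c_{s-1}\in\Tt$ has at most $n_s$ cells above it, the hook undergoes a shrunken pass, losing one cell from each of the foot and the leg while again lifting the heel slide by $1$ (giving $n_{s-1}=n_s+1$).

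In the base case $k=1$, the cell $c=c_1$ sits in column $1$ at position $(1,i-1)$, $n_1=0$ forces $j=i$, and $\fh_{i,i}$ is the single heel cell at slide $i$. Sliding it into $\Tt-c$ deposits it at $(1,i-1)=c$, either by an addition (if $i>1$: the top of column $1$ of $\Tt-c$ has slide $i-1$) or by settling at $(1,0)$ (if $i=1$: column $1$ of $\Tt-c$ is empty). For the inductive step, one inspects the first non-trivial interaction of the hook with $\Tt-c$; by the correspondence above it is one of the three operations, and it leaves a residual hook whose parameters $(i',j')$ coincide with the generalized flight number of $c$ in the appropriate truncation of $\Tt-c$, where the flight path of $c$ now has length $k-1$. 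The inductive hypothesis applied to the truncation, together with the effect of the first pass on the interacting tower, yields the required equation $\fh_{i,j}\searrow(\Tt-c)=\Tt$.

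The main obstacle is the careful case-by-case verification that the slide of the top cell of each interacting tower is precisely the value dictated by the recursive definition of $n_{s-1}$; in particular, one must verify that the number of cells of $\Tt$ strictly above $c_{s-1}$ is what controls whether the tower is tall enough to force a zigzag pass or short enough to force a shrunken pass. A secondary subtlety is the very last interaction, at $c$'s column: it must be an addition or a broken($+$)/broken($-$) pass that actually deposits a cell at the position of $c$, producing the single-cell increase that turns $\Tt-c$ into $\Tt$.
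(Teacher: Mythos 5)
Your proposal is correct and essentially reproduces the paper's proof: both traverse the flight path of $c$ (you by induction, the paper by iteration), matching the three branches of the hook-number recursion to direct, zigzag, and shrunken passes so that the residual hook stays synchronized with the generalized flight number of $c$ in the truncated diagram and collapses to a single cell of slide $\slide(c)$ at $c$'s column. The one point you left open turns out not to be an issue: exactly $j-i$ shrunken passes occur across the columns preceding $c$'s, so the hook is already a single cell when it reaches $c$'s tower, and the final step is necessarily an addition (or settling on the $x$-axis), never a broken pass.
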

\begin{proof} Let $c_1,\ldots, c_l=c$ be the cells in the flight path of $c$ ordered from left to right lying in or over the
towers $\Tt_1,\ldots,\Tt_l$ of $\Tt$, respectively.  We start by sliding  $\fh_{i,j}$ on the tower $\Tt_1$. There are several cases:

\textbf{Case 1.} If $c_1$ is an empty cell lying above $\Tt_1$ then the cell below $c_1$ must also be empty and the
slide of $c_1$ must be $i$. Then $\fh_{i,j}$ pass directly  $\Tt_1$ and
$$ \fh_{i,j} \searrow \mathcal (T- c)= \Tt_{1}  \sqcup  \fh_{i,j} \searrow (\mathcal T_{>1} - c )
 $$
while   the generalized  flight number  of $c$ in $\Tt_{>1}$ is still $(i,j)$ as illustrated in the following figure.
\begin{figure}[h]\setlength{\unitlength}{0.25mm}
\begin{center}
\begin{picture}(80,70)
\multiput(0,0)(0,0){1}{\line(1,0){80}}
\multiput(0,0)(0,0){1}{\line(0,1){60}}
\multiput(0,10)(2,0){40}{\line(0,1){.1}}
\multiput(0,20)(2,0){40}{\line(0,1){.1}}
\multiput(0,30)(2,0){40}{\line(0,1){.1}}
\multiput(0,40)(2,0){40}{\line(0,1){.1}}
\multiput(0,50)(2,0){40}{\line(0,1){.1}}
\multiput(0,60)(2,0){40}{\line(0,1){.1}}
\multiput(10,0)(0,2){30}{\line(1,0){.1}}
\multiput(20,0)(0,2){30}{\line(1,0){.1}}
\multiput(30,0)(0,2){30}{\line(1,0){.1}}
\multiput(40,0)(0,2){30}{\line(1,0){.1}}
\multiput(50,0)(0,2){30}{\line(1,0){.1}}
\multiput(60,0)(0,2){30}{\line(1,0){.1}}
\multiput(70,0)(0,2){30}{\line(1,0){.1}}
\multiput(80,0)(0,2){30}{\line(1,0){.1}}
\put(0,10){\tableau{{}\\{}}}
 \put(10,40){\tableau{{}\\{}\\{\bullet}\\{}\\{}}}
\put(20,20){\tableau{{\bullet}\\{}\\{}}}
\put(30,0){\tableau{{}}} \put(40,0){\tableau{{}}}
\put(53,23){} \put(43,33){}\put(2,33){$\circ$} \put(42,11){c}\put(32,21){$\circ$}
 \put(60,30){\tableau{{}\\{}\\{}\\{}}}
\put(70,10){\tableau{{}\\{}}}
\put(0,-10){$_{\mathcal{T}_{_{1}}}$}\put(10,-10){$_{\mathcal{T}_{_{2}}}$}
\put(25,-10){$\ldots$} \put(40,-10){$_{\mathcal{T}_{_{l}}}$}  \put(-30,70){\tableau{{}\\{}}}
\put(-20,60){\tableau{{}}}
\multiput(-25,65)(0,0){1}{\vector(1,-1){35}}
\end{picture}
\end{center}
\caption{Sliding of $\fh_{4,6}$ in to the tower diagram $\Tt -c$ where the cell $c$ has the generalized flight number $(4,5)$}
\end{figure}

\textbf{Case 2.} If $c_1$ lies in   $\Tt_1$ and the number of nonempty cells lying above $c_1$ is less than or equal to $r$ then  $\fh_{i,j}$ makes a shrunken pass through $\Tt_1$.
That is it continues its sliding on $\Tt_{>1}$ with the new hook $\fh_{i+1,j}$ and
$$ \fh_{i,j} \searrow \mathcal (T- c)= \Tt_{1}  \sqcup  \fh_{i+1,j} \searrow (\mathcal T_{>1} - c ).
 $$
 Observe that in this case  the generalized  flight number  of $c$ in $\Tt_{>1}$ is in fact  $(i+1,j)$ as illustrated in the following figure.
\begin{figure}[h]\setlength{\unitlength}{0.25mm}
\begin{center}
\begin{picture}(80,70)
\multiput(0,0)(0,0){1}{\line(1,0){80}}
\multiput(0,0)(0,0){1}{\line(0,1){60}}
\multiput(0,10)(2,0){40}{\line(0,1){.1}}
\multiput(0,20)(2,0){40}{\line(0,1){.1}}
\multiput(0,30)(2,0){40}{\line(0,1){.1}}
\multiput(0,40)(2,0){40}{\line(0,1){.1}}
\multiput(0,50)(2,0){40}{\line(0,1){.1}}
\multiput(0,60)(2,0){40}{\line(0,1){.1}}
\multiput(10,0)(0,2){30}{\line(1,0){.1}}
\multiput(20,0)(0,2){30}{\line(1,0){.1}}
\multiput(30,0)(0,2){30}{\line(1,0){.1}}
\multiput(40,0)(0,2){30}{\line(1,0){.1}}
\multiput(50,0)(0,2){30}{\line(1,0){.1}}
\multiput(60,0)(0,2){30}{\line(1,0){.1}}
\multiput(70,0)(0,2){30}{\line(1,0){.1}}
\multiput(80,0)(0,2){30}{\line(1,0){.1}}
\put(10,10){\tableau{{}\\{}}}
 \put(0,30){\tableau{{\bullet}\\{}\\{}\\{}}}
\put(20,40){\tableau{{}\\{}\\{\bullet}\\{}\\{}}}
\put(30,0){\tableau{{}}} \put(40,0){\tableau{{}}}
\put(53,23){} \put(43,33){}\put(12,33){$\circ$} \put(42,11){c}\put(32,21){$\circ$}
 \put(60,30){\tableau{{}\\{}\\{}\\{}}}
\put(70,10){\tableau{{}\\{}}}
\put(20,40){\tableau{{}\\{}\\{}\\{}\\{}}} \put(10,0){\tableau{{}}}
\put(0,-10){$_{\mathcal{T}_{_{1}}}$}\put(10,-10){$_{\mathcal{T}_{_{2}}}$}
\put(25,-10){$\ldots$} \put(40,-10){$_{\mathcal{T}_{_{l}}}$}
 \put(-30,70){\tableau{{}\\{}}}
\put(-20,60){\tableau{{}}}
\multiput(-25,65)(0,0){1}{\vector(1,-1){30}}
\end{picture}
\end{center}
\caption{Sliding of $\fh_{4,6}$ in to the tower diagram  $\Tt -c$ where the cell $c$ has the generalized flight number $(4,5)$}
\end{figure}

\textbf{Case 3.} If $c_1$ lies in   $\Tt_1$ and the number of nonempty cell lying above $c_1$ is greater than $r$ then  $\fh_{i,j}$ makes a zigzag pass through $\Tt_1$.
That is it continues its sliding on $\Tt_{>1}$ with the new hook $\fh_{i+1,j+1}$ and
$$ \fh_{i,j} \searrow \mathcal (T- c)= \Tt_{1} \cup \fh_{i+1,j+1} \searrow (\mathcal T_{>1} - c ).
 $$
 Observe that in this case  the generalized flight number  of $c$ in $\Tt_{>1}$ is in fact  $(i+1,j+1)$ as illustrated in the following figure.
\begin{figure}[h]\setlength{\unitlength}{0.25mm}
\begin{center}
\begin{picture}(80,70)
\multiput(0,0)(0,0){1}{\line(1,0){80}}
\multiput(0,0)(0,0){1}{\line(0,1){60}}
\multiput(0,10)(2,0){40}{\line(0,1){.1}}
\multiput(0,20)(2,0){40}{\line(0,1){.1}}
\multiput(0,30)(2,0){40}{\line(0,1){.1}}
\multiput(0,40)(2,0){40}{\line(0,1){.1}}
\multiput(0,50)(2,0){40}{\line(0,1){.1}}
\multiput(0,60)(2,0){40}{\line(0,1){.1}}
\multiput(10,0)(0,2){30}{\line(1,0){.1}}
\multiput(20,0)(0,2){30}{\line(1,0){.1}}
\multiput(30,0)(0,2){30}{\line(1,0){.1}}
\multiput(40,0)(0,2){30}{\line(1,0){.1}}
\multiput(50,0)(0,2){30}{\line(1,0){.1}}
\multiput(60,0)(0,2){30}{\line(1,0){.1}}
\multiput(70,0)(0,2){30}{\line(1,0){.1}}
\multiput(80,0)(0,2){30}{\line(1,0){.1}}
\put(10,10){\tableau{{}\\{}}}
 \put(0,50){\tableau{{}\\{}\\{\bullet}\\{}\\{}\\{}}}
\put(20,20){\tableau{{\bullet}\\{}\\{}}}
\put(30,0){\tableau{{}}} \put(40,0){\tableau{{}}}
\put(53,23){} \put(43,33){}\put(12,33){$\circ$} \put(42,11){c}\put(32,21){$\circ$}
 \put(60,30){\tableau{{}\\{}\\{}\\{}}}
\put(70,10){\tableau{{}\\{}}}
\put(10,0){\tableau{{}}}
\put(0,-10){$_{\mathcal{T}_{_{1}}}$}\put(10,-10){$_{\mathcal{T}_{_{2}}}$}
\put(25,-10){$\ldots$} \put(40,-10){$_{\mathcal{T}_{_{l}}}$}
 \put(-30,70){\tableau{{}\\{}}}
\put(-20,60){\tableau{{}}}
\multiput(-25,65)(0,0){1}{\vector(1,-1){30}}
\end{picture}
\end{center}
\caption{Sliding of $\fh_{4,6}$ in to the tower diagram  $\Tt -c$ where the cell $c$ has the generalized flight number $(4,1)$}
\end{figure}
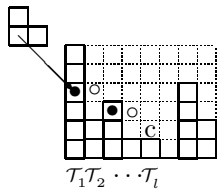

Iterating the above procedure,  we see that in  the sliding of $\fh_{i,j}$, hooks pass through $\Tt_1,\ldots,\Tt_{l-1}$ with a direct or a shrunken or a zigzag pass  and a shrunken pass occurs exactly on such towers at which the hook number of $c$ increases by one. Hence when the sliding arrives to $\Tt_l$ it already shrunk to a single cell with slide equal to the one of $c$. Hence the sliding terminates by filling the cell $c$.
\end{proof}

\begin{rem}\label{rem1}  It is clear from its definition that the sliding of a hook $\fh_{i,j}=s_{j}\ldots s_{i+1}s_is_{i+1}\ldots s_{j}$ can be done tower by tower, obeying
one of the above basic steps at each tower. Among the five basic steps, the only one which increases the size of the given tower
diagram is a broken(+) pass. Recall that, if $\fh_{i,j}$ passes a tower say $\Tt_l$ with a  broken(+) pass,  its  foot (corresponding to $s_{j}\ldots s_{i+1}$)
continues to its sliding on $\Tt_{>l}$ while  the heel and the feet (corresponding to $s_is_{i+1}\ldots s_{j}$) are placed on top $\Tt_l$.
Below, we show that if the sliding of the foot erases $j-i$ cells from $\Tt_{>l}$, then these cells must be the top $j-i
$ cells of a unique tower.

Suppose that in the  sliding $ s_{j}\ldots s_{i+1}$ into $\mathcal{T}_{>l}$,   $s_{j}$ deletes a top cell of a  $\mathcal{T}_s$ for some $l<s$. Denote by $\Tt_s^\prime$ the tower obtained by removing this top cell of $\Tt_s$.

Now, if the sliding of $s_{j-1}$  adds a cell on top of a tower, say $\mathcal{T}_{t}$, then  $t\leq s$  since the sliding path of $s_{j-1}$ always lies below the sliding path of $s_{j}$, as illustrated in Figure \ref{fig:remove-rest222}.

\begin{figure}[h]\setlength{\unitlength}{0.25mm}
\begin{center}
\begin{picture}(120,95)
\multiput(0,0)(0,0){1}{\line(1,0){120}}
\multiput(0,0)(0,0){1}{\line(0,1){60}}
\multiput(0,10)(2,0){60}{\line(0,1){.1}}
\multiput(0,20)(2,0){60}{\line(0,1){.1}}
\multiput(0,30)(2,0){60}{\line(0,1){.1}}
\multiput(0,40)(2,0){60}{\line(0,1){.1}}
\multiput(0,50)(2,0){60}{\line(0,1){.1}}
\multiput(0,60)(2,0){60}{\line(0,1){.1}}
\multiput(10,0)(0,2){30}{\line(1,0){.1}}
\multiput(20,0)(0,2){30}{\line(1,0){.1}}
\multiput(30,0)(0,2){30}{\line(1,0){.1}}
\multiput(40,0)(0,2){30}{\line(1,0){.1}}
\multiput(50,0)(0,2){30}{\line(1,0){.1}}
\multiput(60,0)(0,2){30}{\line(1,0){.1}}
\multiput(70,0)(0,2){30}{\line(1,0){.1}}
\multiput(80,0)(0,2){30}{\line(1,0){.1}}
\multiput(90,0)(0,2){30}{\line(1,0){.1}}
\multiput(100,0)(0,2){30}{\line(1,0){.1}}
\multiput(110,0)(0,2){30}{\line(1,0){.1}}
\multiput(120,0)(0,2){30}{\line(1,0){.1}}
\put(20,40){\tableau{{}\\{}\\{}\\{}\\{}}} \put(10,0){\tableau{{}}}
\put(30,0){\tableau{{}}} \put(50,20){\tableau{{}\\{}\\{}}}
\put(60,40){\tableau{{}\\{}\\{}\\{}\\{}}} \put(80,0){\tableau{{}}}
\put(90,10){\tableau{{\varnothing}\\{}}}
  \put(-10,100){\tableau{{}\\{}\\{}}}
\put(0,80){\tableau{{}}} \put(10,80){\tableau{{}}}
\multiput(15,85)(0,0){1}{\line(1,-1){50}}
\multiput(5,85)(0,0){1}{\vector(1,-1){50}}\put(53,33){$\bullet$}
\multiput(65,35)(0,0){1}{\line(0,1){10}}
\multiput(65,45)(0,0){1}{\vector(1,-1){28}} \put(27,75){$s_{j}$} \put(0,65){$s_{j-1}$}
\put(20,-7){$_{\mathcal{T}_{_{l}}}$}\put(50,-7){$_{\mathcal{T}_{_{t}}}$}
\put(60,-7){$\ldots$}\put(90,-7){$_{\mathcal{T}_{_{s}}}$}\put(100,-7){$\ldots$}
\put(110,-7){$_{\mathcal{T}_{_{n}}}$}
\end{picture}
\hskip.6in
\begin{picture}(120,95)
\multiput(0,0)(0,0){1}{\line(1,0){120}}
\multiput(0,0)(0,0){1}{\line(0,1){60}}
\multiput(0,10)(2,0){60}{\line(0,1){.1}}
\multiput(0,20)(2,0){60}{\line(0,1){.1}}
\multiput(0,30)(2,0){60}{\line(0,1){.1}}
\multiput(0,40)(2,0){60}{\line(0,1){.1}}
\multiput(0,50)(2,0){60}{\line(0,1){.1}}
\multiput(0,60)(2,0){60}{\line(0,1){.1}}
\multiput(10,0)(0,2){30}{\line(1,0){.1}}
\multiput(20,0)(0,2){30}{\line(1,0){.1}}
\multiput(30,0)(0,2){30}{\line(1,0){.1}}
\multiput(40,0)(0,2){30}{\line(1,0){.1}}
\multiput(50,0)(0,2){30}{\line(1,0){.1}}
\multiput(60,0)(0,2){30}{\line(1,0){.1}}
\multiput(70,0)(0,2){30}{\line(1,0){.1}}
\multiput(80,0)(0,2){30}{\line(1,0){.1}}
\multiput(90,0)(0,2){30}{\line(1,0){.1}}
\multiput(100,0)(0,2){30}{\line(1,0){.1}}
\multiput(110,0)(0,2){30}{\line(1,0){.1}}
\multiput(120,0)(0,2){30}{\line(1,0){.1}}
\put(20,40){\tableau{{}\\{}\\{}\\{}\\{}}} \put(10,0){\tableau{{}}}
\put(30,0){\tableau{{}}} \put(50,10){\tableau{{}\\{}}}
\put(60,40){\tableau{{}\\{}\\{}\\{}\\{}}} \put(80,0){\tableau{{}}}
\put(90,10){\tableau{{\varnothing}\\{}}}
  \put(-10,100){\tableau{{}\\{}\\{}}}
\put(0,80){\tableau{{}}} \put(10,80){\tableau{{}}}
\multiput(15,85)(0,0){1}{\line(1,-1){50}}
\multiput(65,35)(0,0){1}{\line(0,1){10}}
\multiput(65,45)(0,0){1}{\vector(1,-1){28}}
\multiput(5,85)(0,0){1}{\line(1,-1){60}}
\multiput(65,25)(0,0){1}{\line(0,1){10}}
\multiput(65,35)(0,0){1}{\vector(1,-1){18}} \put(83,13){$\bullet$}
\put(27,75){$s_{j}$} \put(0,65){$s_{j-1}$}
\put(20,-7){$_{\mathcal{T}_{_{l}}}$}\put(50,-7){$_{\mathcal{T}_{_{t}}}$}
\put(60,-7){$\ldots$}\put(90,-7){$_{\mathcal{T}_{_{s}}}$}\put(100,-7){$\ldots$}
\put(110,-7){$_{\mathcal{T}_{_{n}}}$}
\end{picture}
\end{center}
\caption{}\label{fig:remove-rest222}
\end{figure}

In fact, if  the sliding of $s_{j-1}$ arrives to the tower $\Tt_s^\prime$, as illustrated in Figure \ref{fig:remove-rest2},  then it must delete the top cell of $\Tt_s^\prime$. Hence $t< s$ in this case.

\begin{figure}[h]\setlength{\unitlength}{0.25mm}
\begin{center}
\begin{picture}(120,95)
\multiput(0,0)(0,0){1}{\line(1,0){120}}
\multiput(0,0)(0,0){1}{\line(0,1){60}}
\multiput(0,10)(2,0){60}{\line(0,1){.1}}
\multiput(0,20)(2,0){60}{\line(0,1){.1}}
\multiput(0,30)(2,0){60}{\line(0,1){.1}}
\multiput(0,40)(2,0){60}{\line(0,1){.1}}
\multiput(0,50)(2,0){60}{\line(0,1){.1}}
\multiput(0,60)(2,0){60}{\line(0,1){.1}}
\multiput(10,0)(0,2){30}{\line(1,0){.1}}
\multiput(20,0)(0,2){30}{\line(1,0){.1}}
\multiput(30,0)(0,2){30}{\line(1,0){.1}}
\multiput(40,0)(0,2){30}{\line(1,0){.1}}
\multiput(50,0)(0,2){30}{\line(1,0){.1}}
\multiput(60,0)(0,2){30}{\line(1,0){.1}}
\multiput(70,0)(0,2){30}{\line(1,0){.1}}
\multiput(80,0)(0,2){30}{\line(1,0){.1}}
\multiput(90,0)(0,2){30}{\line(1,0){.1}}
\multiput(100,0)(0,2){30}{\line(1,0){.1}}
\multiput(110,0)(0,2){30}{\line(1,0){.1}}
\multiput(120,0)(0,2){30}{\line(1,0){.1}}
\put(20,40){\tableau{{}\\{}\\{}\\{}\\{}}} \put(10,0){\tableau{{}}}
\put(30,0){\tableau{{}}} \put(50,10){\tableau{{}\\{}}}
\put(60,40){\tableau{{}\\{}\\{}\\{}\\{}}}
\put(90,10){\tableau{{\varnothing}\\{\varnothing}}}
  \put(-10,100){\tableau{{}\\{}\\{}}}
\put(0,80){\tableau{{}}} \put(10,80){\tableau{{}}}
\multiput(15,85)(0,0){1}{\line(1,-1){50}}
\multiput(65,35)(0,0){1}{\line(0,1){10}}
\multiput(65,45)(0,0){1}{\vector(1,-1){28}}
\multiput(5,85)(0,0){1}{\line(1,-1){60}}
\multiput(65,25)(0,0){1}{\line(0,1){10}}
\multiput(65,35)(0,0){1}{\vector(1,-1){28}}
\put(27,75){$s_{j}$} \put(0,65){$s_{j-1}$}
\put(20,-7){$_{\mathcal{T}_{_{l}}}$}\put(50,-7){$_{\mathcal{T}_{_{t}}}$}
\put(60,-7){$\ldots$}\put(90,-7){$_{\mathcal{T}_{_{s}}}$}\put(100,-7){$\ldots$}
\put(110,-7){$_{\mathcal{T}_{_{n}}}$}
\end{picture}
\end{center}
\caption{}\label{fig:remove-rest2}
\end{figure}
On the other hand if the sliding of $s_{j-1}$  removes the top cell of some  tower, say $\mathcal{T}_{t}$ for some $t<s$,  then sliding of $s_{j}$ would fill the empty  cell on  top of $\Tt_t$ in the first place, since this empty cell lies on the sliding path of $s_{j}$. Hence $t=s$ in this case.

Iterating the argument above we can conclude that if  $\fh_{i,j}$ passes a tower say $\Tt_l$ with a  broken(+) pass and its  if its foot erases $r$ cell from $\Tt_{>l}$
then  these cells must be the top $r$ cells of a unique tower.
\end{rem}

Hence we have proved the following theorem which has crucial importance in the following sections.
\begin{thm}\label{thm:plus1}
Let $\omega$ be a permutation with tower diagram $\mathcal T_\omega$ and
let $t_{i,j+1}$ be a transposition corresponding to hook  $\fh_{i,j}$. Then
$$l(\omega\cdot t_{i,j+1}) = l(\omega) + 1$$
if and only if one of the following condition holds in $\fh_{i,j}\searrow \Tt_\omega $.
\begin{enumerate}
\item Being reduced to a single cell following a sequence of direct or shrunken or zigzag passes, the  sliding   $\fh_{i,j}$
into $\Tt_\omega $ fills the empty cell, say $e_0$  on top of some tower $\Tt_l$ with generalized flight number $(i,j)$.
\item By following a sequence of direct or shrunken or zigzag passes which is ended with a broken(+) pass the  sliding
$\fh_{i,j}$ into $\Tt_\omega $ puts $r+1$ cells, say $e_0,e_1,\ldots, e_{r}$, on top of some tower
$\Tt_l$
and erases top $r$ cells of some tower $\Tt_s$ with $s>l$ and $0<r\leq j-i$.
 \end{enumerate}
\end{thm}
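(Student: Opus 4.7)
The plan is to translate the length condition into a size condition on tower diagrams, and then walk through the five basic passes to see which combinations can produce a net change of exactly one cell. By Corollary 2.1 we have $\Tt_{\omega\cdot t_{i,j+1}} = \fh_{i,j}\searrow \Tt_\omega$, and since the length of a permutation equals the number of cells of its tower diagram, the hypothesis $l(\omega \cdot t_{i,j+1}) = l(\omega)+1$ becomes
$$|\fh_{i,j} \searrow \Tt_\omega| \;=\; |\Tt_\omega| + 1.$$
So the problem reduces to determining when the hook sliding produces exactly one extra cell.

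Next I would track the net contribution of each basic pass. Direct, shrunken and zigzag passes leave the total size unchanged. A broken($+$) pass places heel-and-leg of size $j'-i'+1$ on the current tower $\Tt_l$, where $\fh_{i',j'}$ is the (possibly shrunken and shifted) hook at the moment of the pass, and its foot of size $j'-i'$ continues its sliding on $\Tt_{>l}$; by Theorem 2.1 this foot slides as the word $s_{j'}\cdots s_{i'+1}$, so cell by cell under the single-cell rules of Section 2, and each such cell eventually contributes $+1$ (addition) or $-1$ (deletion). A broken($-$) pass removes $j'-i'+1$ cells from $\Tt_l$, and its leg then continues cell by cell in the same fashion.

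Case analysis on the first non-trivial event proceeds as follows. If no broken pass ever occurs, the hook is reduced to a single cell by exactly $j-i$ shrunken passes (interleaved with directs and zigzags), and this cell must end by either being added (net $+1$) or deleted (net $-1$). Only the addition satisfies our hypothesis, and Proposition 3.2 identifies the generalized flight number of the newly placed cell $e_0$ on $\Tt_l$ as $(i,j)$; this is case (1). If a broken($-$) pass occurs before any broken($+$), its $j'-i'+1$ deletions can be offset by at most $j'-i'$ additions in the leg, giving net change at most $-1$, which is incompatible with our hypothesis. If a broken($+$) pass occurs first, say after $k$ shrunken passes (and any number of direct or zigzag ones), its placement contributes $r+1$ cells with $r=j-i-k$; for the total change to be $+1$ the foot must delete all $r$ of its cells and make no additions. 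The bounds $0<r\le j-i$ then hold automatically, and Remark 4.1 forces these $r$ deletions to hit the top $r$ cells of a uniquely determined tower $\Tt_s$ with $s>l$. This is case (2). The converse is immediate: case (1) contributes $+1$ by construction, and case (2) contributes $(r+1)-r=+1$.

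The main obstacle is the two structural identifications—the generalized flight number of the added cell in case (1) and the uniqueness of the tower hosting the deletions in case (2)—which are supplied by Proposition 3.2 and Remark 4.1 respectively. The absence of further hook-level splittings after a broken pass, so that the foot (or leg) can only contribute single-cell additions and deletions, follows from Theorem 2.1.
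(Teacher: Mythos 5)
Your proposal is correct and takes essentially the same route as the paper, which obtains the theorem from the five basic passes together with Proposition \ref{pro:slideflight} and Remark \ref{rem1} --- precisely the size-versus-length bookkeeping and case analysis that you spell out (what you cite as ``Proposition 3.2'' is Proposition \ref{pro:slideflight}, and your use of Remark \ref{rem1} for the uniqueness of the tower hosting the deletions matches the paper). The only step to make explicit is in your case (1): Proposition \ref{pro:slideflight} is stated in the direction ``generalized flight number $(i,j)$ implies the hook fills that cell,'' so to conclude that the filled cell $e_0$ has generalized flight number $(i,j)$ you should add one line, either by invoking the bijection between permutations and tower diagrams (if $e_0$ has generalized flight number $(i_0,j_0)$, then $\Tt_{\omega\cdot t_{i_0,j_0+1}}=\Tt_{\omega\cdot t_{i,j+1}}$, forcing $(i_0,j_0)=(i,j)$) or by tracking the flight number through the direct, shrunken and zigzag passes exactly as in the proof of that proposition.
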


\begin{rem} With the notation of the theorem, consider Figure \ref{fig:remove-rest} for an illustration of the second case of the theorem.  Let $\omega' = \omega\cdot t_{i,j+1}$ and $\Tt'$ be the tower diagram corresponding to $\omega'$. In the following we show that $\omega'$-index of $\Tt'$ is obtained by interchanging $i$ and $j+1$ in the $\omega$- index of $\Tt$.

\begin{figure}[h]\setlength{\unitlength}{0.25mm}
\begin{center}
\begin{picture}(120,95)
\multiput(0,0)(0,0){1}{\line(1,0){120}}
\multiput(0,0)(0,0){1}{\line(0,1){60}}
\multiput(0,10)(2,0){60}{\line(0,1){.1}}
\multiput(0,20)(2,0){60}{\line(0,1){.1}}
\multiput(0,30)(2,0){60}{\line(0,1){.1}}
\multiput(0,40)(2,0){60}{\line(0,1){.1}}
\multiput(0,50)(2,0){60}{\line(0,1){.1}}
\multiput(0,60)(2,0){60}{\line(0,1){.1}}
\multiput(10,0)(0,2){30}{\line(1,0){.1}}
\multiput(20,0)(0,2){30}{\line(1,0){.1}}
\multiput(30,0)(0,2){30}{\line(1,0){.1}}
\multiput(40,0)(0,2){30}{\line(1,0){.1}}
\multiput(50,0)(0,2){30}{\line(1,0){.1}}
\multiput(60,0)(0,2){30}{\line(1,0){.1}}
\multiput(70,0)(0,2){30}{\line(1,0){.1}}
\multiput(80,0)(0,2){30}{\line(1,0){.1}}
\multiput(90,0)(0,2){30}{\line(1,0){.1}}
\multiput(100,0)(0,2){30}{\line(1,0){.1}}
\multiput(110,0)(0,2){30}{\line(1,0){.1}}
\multiput(120,0)(0,2){30}{\line(1,0){.1}}
\put(20,30){\tableau{{}\\{}\\{}\\{}}} \put(10,0){\tableau{{}}}
\put(30,0){\tableau{{}}} \put(50,10){\tableau{{}\\{}}} \put(50,25){$_{x_l}$}\put(80,35){$_{x_s}$}
\put(60,40){\tableau{{}\\{}\\{}\\{}\\{}}}
\put(80,20){\tableau{{\varnothing}\\{\varnothing}\\{}}}\put(80,35){$_{x_s}$}
  \put(-10,100){\tableau{{}\\{}\\{}}}
\put(0,80){\tableau{{}}} \put(10,80){\tableau{{}}}
\multiput(15,85)(0,0){1}{\line(1,-1){50}}
\multiput(65,35)(0,0){1}{\line(0,1){10}}
\multiput(65,45)(0,0){1}{\vector(1,-1){18}}
\multiput(5,85)(0,0){1}{\line(1,-1){60}}
\multiput(65,25)(0,0){1}{\line(0,1){10}}
\multiput(65,35)(0,0){1}{\vector(1,-1){18}}
\put(27,75){$s_{j}$} \put(0,65){$s_{i+1}$}
\put(20,-7){$_{\mathcal{T}_{_{l}}}$}\put(50,-7){$_{\mathcal{T}_{_{l}}}$}
\put(60,-7){$\ldots$}\put(80,-7){$_{\mathcal{T}_{_{s}}}$}\put(100,-7){$\ldots$}
\put(110,-7){$_{\mathcal{T}_{_{n}}}$}
\put(-35,45){$\Tt=$}
\end{picture}
\hskip .2in
\begin{picture}(120,95)
\multiput(0,0)(0,0){1}{\line(1,0){120}}
\multiput(0,0)(0,0){1}{\line(0,1){60}}
\multiput(0,10)(2,0){60}{\line(0,1){.1}}
\multiput(0,20)(2,0){60}{\line(0,1){.1}}
\multiput(0,30)(2,0){60}{\line(0,1){.1}}
\multiput(0,40)(2,0){60}{\line(0,1){.1}}
\multiput(0,50)(2,0){60}{\line(0,1){.1}}
\multiput(0,60)(2,0){60}{\line(0,1){.1}}
\multiput(10,0)(0,2){30}{\line(1,0){.1}}
\multiput(20,0)(0,2){30}{\line(1,0){.1}}
\multiput(30,0)(0,2){30}{\line(1,0){.1}}
\multiput(40,0)(0,2){30}{\line(1,0){.1}}
\multiput(50,0)(0,2){30}{\line(1,0){.1}}
\multiput(60,0)(0,2){30}{\line(1,0){.1}}
\multiput(70,0)(0,2){30}{\line(1,0){.1}}
\multiput(80,0)(0,2){30}{\line(1,0){.1}}
\multiput(90,0)(0,2){30}{\line(1,0){.1}}
\multiput(100,0)(0,2){30}{\line(1,0){.1}}
\multiput(110,0)(0,2){30}{\line(1,0){.1}}
\multiput(120,0)(0,2){30}{\line(1,0){.1}}
\put(20,30){\tableau{{}\\{}\\{}\\{}}} \put(10,0){\tableau{{}}}
\put(30,0){\tableau{{}}} \put(50,40){\tableau{{}\\{}\\{}\\{}\\{}}}
\put(50,25){$_{e_0}$}\put(50,45){$_{e_r}$}\put(50,30){$\cdot$}\put(50,35){$\cdot$}\put(50,40){$\cdot$}
 \put(50,55){$_{x'_l}$}\put(80,15){$_{x'_s}$}
\put(60,40){\tableau{{}\\{}\\{}\\{}\\{}}}
\put(80,0){\tableau{{}}}
  \put(-10,100){\tableau{{}\\{}\\{}}}
\multiput(-5,105)(0,0){1}{\vector(1,-1){55}}
\multiput(-5,85)(0,0){1}{\vector(1,-1){55}}
\put(27,75){$s_{j}$} \put(0,65){$s_{i}$}
\put(20,-7){$_{\mathcal{T}_{_{l}}}$}\put(50,-7){$_{\mathcal{T}'_{_{l}}}$}
\put(60,-7){$\ldots$}\put(80,-7){$_{\mathcal{T}'_{_{s}}}$}\put(100,-7){$\ldots$}
\put(110,-7){$_{\mathcal{T}_{_{n}}}$}
\put(125,45){$=\Tt'$}
\end{picture}
\end{center}
\caption{}\label{fig:remove-rest}
\end{figure}

Observe that the empty cell $x_l$  can be filled if we slide   $\fh_{i,j-r}$ into $\Tt$. Hence the flight number of $x_l$ in $\Tt$ is  $i$.  On the other hand
the top cell of $\Tt_s$ is a corner cell with flight number $j$ since it is erased by the sliding of $s_j$.  Now it is easy to deduce that the flight number of the empty cell $x_s$ is $j+1$, since it lies on top of a corner cell with flight number $j$. Hence in the $\omega$-index of $\Tt$, the $l$-th index is $i$, whereas the $s$-th index is $j+1$.

Similarly the top cell $e_r$ in $\Tt'_l$ is produced by sliding of $s_j$, hence it is a corner cell with flight number $j$. Therefore by being the empty cell on top this corner cell $x'_l$ has flight  number $j+1$. Here observe that if we  slid $\fh_{i,j-r}$ into $\Tt'$,  it would be reduced to a single cell when it arrived $\Tt'_l$.  Moreover this single cell would fill $x'_s$ after making a zigzag pass at this tower. This argument shows that the flight number of  $x'_s$ is $i$. Hence  in the $\omega'$- index of $\Tt'$, the $l$-th index is $j+1$ whereas the $s$-index is $i$.

For the first case the same result can be shown easily by following the argument  used in the proof of the Proposition \ref{line.notation}.
\end{rem}

\begin{cor}\label{cor: inc-dec}
Given a tower diagram $\mathcal T$, and a transposition $t_{i,j}\in S_n$ with $i<j$. Let $\omega = \omega_\mathcal T$ and
$\mathcal T_s$ be the tower with $\omega$-index $i$ and $\mathcal T_l$ be the tower with $\omega$-index $j$ in $\mathcal T$. Then we have
$l(\omega t_{i, j}) = l(\omega) + 1$
only if $s < l$. Moreover,
if $l(\omega t_{i, j}) = l(\omega) + 1$, then
\begin{enumerate}
\item the tower diagram of $\omega t_{i, j}$ is obtained from $\mathcal T_\omega$ by modifying only the towers $\mathcal T_s$ and $\mathcal T_l$. In
particular, a new cell $c_s$ is added to the tower $\mathcal T_s$ and possibly some cells are moved from $\mathcal T_l$ to $\mathcal T_s$.
\item $j$ is the $\omega t_{i, j}$-index of the tower whose height is increased.
\item $i$ is the $\omega t_{i, j}$-index of the other modified tower whose height may or may not decreased.
\item The new cell $c_s$ has flight number $i$.
\end{enumerate}
\end{cor}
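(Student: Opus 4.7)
The plan is to deduce this corollary directly from Theorem \ref{thm:plus1} and the Remark following it. Writing $t_{i,j}=t_{i,(j-1)+1}$, the relevant hook is $\fh_{i,j-1}$, and Theorem \ref{thm:plus1} gives exactly two possibilities under the length condition: (a) the hook reduces to a single cell filling the empty top of some tower $\mathcal T_p$ of generalized flight number $(i,j-1)$; or (b) the sliding ends with a broken$(+)$ pass at some tower $\mathcal T_p$ adding $r+1$ cells on top of $\mathcal T_p$ and erasing the top $r$ cells of a strictly later tower $\mathcal T_q$, with $1\le r\le j-1-i$.

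The first step is to identify $\mathcal T_p$ and (in case (b)) $\mathcal T_q$ with the towers $\mathcal T_s$ and $\mathcal T_l$ of the statement. In both cases the empty top of $\mathcal T_p$ has flight number $i$ before modification, forcing $\mathcal T_p=\mathcal T_s$. In case (b) the Remark shows that the top cell erased from $\mathcal T_q$ is a corner cell of flight number $j$, so the new empty top of $\mathcal T_q$ (still in the diagram of $\omega$) has flight number $j$; uniqueness of the $\omega$-index identifies $\mathcal T_q$ with $\mathcal T_l$, and then $s<l$ follows from $p<q$. In case (a) only $\mathcal T_s$ is physically modified, and the claim $s<l$ must be extracted from the hook number $j-1$ of the empty top of $\mathcal T_s$: unwinding the recursive definition forces the existence of a strictly later tower whose empty top has flight number $j$, which is therefore $\mathcal T_l$.

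Items (1)--(4) now follow quickly. Item (1) is the summary of (a) and (b), the word \emph{moved} in (b) referring to the $r$ cells erased from the top of $\mathcal T_l$ and reappearing at the top of $\mathcal T_s$. Items (2) and (3) come from the Remark for case (b), and for case (a) from the same-spirit argument the Remark points to (``following the argument used in the proof of Proposition \ref{line.notation}''): in both cases the $\omega t_{i,j}$-index is obtained from the $\omega$-index by swapping the entries $i$ and $j$ at positions $s$ and $l$, so the new empty top of $\mathcal T_s$ has flight number $j$ and that of $\mathcal T_l$ has flight number $i$. Item (4) is immediate: the cell $c_s$ sits at the former slide-$i$ empty top of $\mathcal T_s$, and this empty top had flight number $i$.

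The main delicacy is the case (a) positional check, which requires a careful unwinding of the recursive definition of the hook number to see how a hook number of $j-1$ at a flight-$i$ empty top forces the existence of a strictly later tower whose empty top has flight number $j$. Once this is in hand, the remainder is straightforward bookkeeping on top of Theorem \ref{thm:plus1} and its Remark.
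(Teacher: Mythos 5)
Your overall plan matches the paper's: Corollary \ref{cor: inc-dec} is extracted from Theorem \ref{thm:plus1} and the Remark following it, with the two cases (reduction to a single cell vs.\ a terminal broken$(+)$ pass) identified exactly as you do, and the index swap in case (b) read off from the Remark. Your identification of $\mathcal T_p$ with $\mathcal T_s$ via the flight number $i$ of the empty top, and of $\mathcal T_q$ with $\mathcal T_l$ via the flight number $j$, is correct, and items (1)--(4) then follow as you say.

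The one place you flag as delicate --- proving $s<l$ in case (a) --- is genuine, but the route you propose (``unwinding the recursive definition of the hook number'') is a detour and you never actually carry it out, so as written there is a gap. There are two cleaner ways to close it, both closer to what the paper intends. First, the purely combinatorial one: since $\mathcal T_s$ has $\omega$-index $i$ and $\mathcal T_l$ has $\omega$-index $j$, we have $\omega(i)=s$ and $\omega(j)=l$; the hypothesis $l(\omega t_{i,j})=l(\omega)+1$ with $i<j$ forces $\omega(i)<\omega(j)$, i.e.\ $s<l$, with no diagram analysis at all. Second, the diagrammatic one the Remark points to: in case (a) the diagrams $\mathcal T$ and $\mathcal T'=\mathcal T_{\omega t_{i,j}}$ agree on towers $1,\dots,s-1$, so the $\omega$- and $\omega t_{i,j}$-indexes agree in those positions; since the two index sequences are permutations differing precisely by swapping the entries $i$ and $j$ (at positions $s$ and $l$), the position $l$ carrying $j$ cannot lie among the unchanged positions $<s$, hence $l>s$. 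Either of these would replace your hook-number argument and make the case (a) branch rigorous. With that substitution the proposal is a correct proof along the paper's lines.
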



\section{Monk's rule}\label{sec:monks}

In this section, we use the above versions of the flight and the sliding algorithms to give a constructive description of the
Monk's rule \cite{M}. We refer to \cite{LS} for the theory of Schubert polynomials.

\begin{thm}[Monk's Rule]
Let $\omega$ be a permutation and let $\mathfrak S_\omega$ denote the Schubert polynomial of $\omega$. Also let
$s_k$ be an adjacent transposition. Then
\[
\mathfrak{S}_\omega \cdot \mathfrak S_{s_k} = \sum_{\omega^\prime\in \omega\wedge s_k}
\mathfrak S_{\omega^\prime}
\]
where $\omega\wedge s_k$ is the set of all permutations  of the form $\omega\cdot t_{i,j}$ of length $l(\omega)
+1$, where $i\le k<j$.
 \end{thm}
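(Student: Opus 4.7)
The theorem is the classical Monk's rule, and the authors explicitly disclaim any intention of reproving it; the novelty is an algorithmic description of the set $\omega\wedge s_k$ directly from the tower diagram $\Tt_\omega$. My plan is therefore to verify that such a tower-diagram description produces exactly the set prescribed by Monk's rule, after which the polynomial identity follows from the classical result.

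First, I would invoke Corollary~\ref{cor: inc-dec} to parameterize the cover relations of $\omega$ in Bruhat order: the permutations $\omega' = \omega \cdot t_{i,j}$ with $l(\omega') = l(\omega)+1$ correspond bijectively to ordered pairs of towers $(\Tt_s, \Tt_l)$ of $\Tt_\omega$, where $s < l$ and $(i,j)$ is the pair of $\omega$-indices of $\Tt_s$ and $\Tt_l$, respectively. Under this bijection, the constraint $i \leq k < j$ of Monk's rule becomes the transparent condition that the $\omega$-index of the left tower $\Tt_s$ is at most $k$ while the $\omega$-index of the right tower $\Tt_l$ exceeds $k$. Enumerating such pairs from $\Tt_\omega$ therefore gives a combinatorial description of the entire set $\omega\wedge s_k$.

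Next, I would show constructively that for each such pair $(\Tt_s,\Tt_l)$ the associated permutation $\omega'$ is realized by sliding the hook $\fh_{i,j-1}$ (which represents $t_{i,j}$) into $\Tt_\omega$: by Theorem~\ref{thm:plus1} this sliding yields a tower diagram of a permutation of length $l(\omega)+1$ differing from $\omega$ by exactly the transposition $t_{i,j}$, and by the corollary following Theorem~\ref{thm:wordshape} this permutation is $\omega\cdot t_{i,j}$. Conversely, the remark following Corollary~\ref{cor: inc-dec} shows that the $\omega'$-indices of the two modified towers are obtained from the $\omega$-indices by swapping $i$ and $j$, so distinct pairs $(s,l)$ yield distinct covers $\omega'$, and no cover is missed.

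The combinatorial content is thus relatively routine once Corollary~\ref{cor: inc-dec} is in hand. The main obstacle will be the bookkeeping in the broken(+) case of Theorem~\ref{thm:plus1}, where several cells migrate from $\Tt_l$ to $\Tt_s$: one must check that the new top cell of the modified $\Tt_s$ has flight number $i$, that the modified $\Tt_l$ acquires $\omega'$-index $i$, and that the intermediate towers retain their original $\omega$-indices. Once this is verified — and it is essentially the content of the remark after Corollary~\ref{cor: inc-dec} — the set produced by the tower-diagram construction coincides with $\omega\wedge s_k$, and Monk's classical theorem yields the stated polynomial identity.
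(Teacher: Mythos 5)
You correctly identify this as the classical Monk formula; the paper states it with a citation to \cite{M} and does not reprove it, and the introduction explicitly disclaims providing a new proof. For the present statement, the citation suffices, so your overall approach matches the paper's treatment.

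However, your supplementary discussion contains an overstatement that would matter if you actually carried out the tower-diagram verification. You claim that covers $\omega' = \omega t_{i,j}$ with $l(\omega')=l(\omega)+1$ correspond \emph{bijectively} to ordered pairs of towers $(\Tt_s,\Tt_l)$ with $s<l$, so that enumerating all such pairs describes the whole set $\omega\wedge s_k$. Corollary~\ref{cor: inc-dec} gives only the necessary direction (a cover forces $s<l$), not sufficiency. Indeed, take $\omega$ the identity, so $\Tt_\omega$ is empty and the $\omega$-index of $\Tt_i$ is $i$: every pair with $i<j$ is then of the prescribed form $s<l$, yet $l(\omega t_{i,j}) = 2(j-i)-1$, which equals $1$ only when $j=i+1$, so almost none of these pairs give covers. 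Determining \emph{which} pairs actually produce covers is exactly the nontrivial combinatorial content of Section~\ref{sec:monks}: it is handled by Theorem~\ref{thm:plus1} together with the Schubert path, critical cell, and Lemma~\ref{lem:funda} apparatus that the paper develops before arriving at the algorithmic Theorem~\ref{mainthm}. This gap does not invalidate your proof of the classical statement --- citing Monk is sufficient --- but the proposed ``enumeration of all $s<l$ pairs'' would overcount and is not the paper's route to the algorithmic version.
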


Recall that for a given tower diagram $\mathcal T$, the associated permutation is denoted by $\omega_\mathcal T$.
Our aim is to describe Monk's rule using this correspondence. For this aim, first we define, for a positive integer $k$, the
set
$$ k\cdot\mathcal T= \{ \mathcal U \mid \omega_\mathcal U\in \omega_\mathcal T \wedge s_k\}$$
i.e. $k\cdot\mathcal T$ is the set of all tower diagrams of the  permutations in $\omega_\mathcal T\wedge s_k$.
Note that, since the length of the permutations in $\omega_\mathcal T \wedge s_k$ are just one more than that of
$\omega_\mathcal T$, the sizes of the diagrams in $k\cdot\mathcal T$  must be one bigger than
that of $\mathcal T$. As discussed in Remark~\ref{rem1}, this
happens if and only if the sliding of a hook adds $r+1$ cells to a certain tower and deletes $r$ cells from a
tower lying to the right of it for some $r\le 0$. Therefore, we only need an algorithm to choose these special towers.

Next, we determine a path for $k$ in $\Tt$ which is used in this algorithm. This is a certain extension of the path which
consists of cells that the sliding of $k$ to $\Tt$ would pass through. We denote by $\mathcal V_i$ the vertical strip over the interval $[i-1, i]$.
\begin{defn}
Let $\Tt $ be a tower diagram and $k$ be a natural number. The \textbf{\textit{Schubert path}} $\mathcal P =\scp(k,\Tt)$
of $k$ in $\mathcal T$ is the set of all cells in the first quadrant such that
\begin{enumerate}
\item $|\mathcal P\cap \mathcal V_i| \le 1$ for each $i$ and
\item If $\mathcal P\cap \mathcal V_i = {(i, j)}$, then $\fn(\mathcal T, (i,j))\le k$ and for any $j'>j$, we have $\fn(\mathcal T, (i,j'))>k$.
\end{enumerate}
\end{defn}

\begin{rem}
The Schubert path $\mathcal P =\scp(k,\Tt)$ of $k$ in $\Tt$ can be determined recursively as follows. The path
$\mathcal P$ starts with the cell $(1, k-1)$ and
extends to the right and downwards in such a way that if a cell $c$ is in $\mathcal P$ then so is  the one of the following cells.
\begin{enumerate}
\item If $c\in \mathcal T$, then  the one on the right of $c$ is  also in $\mathcal P$
\item If $c\not \in \Tt$ and $c$ is above the $x$-axis, then the one on the down-right of $c$  is also in $\mathcal P$.
\end{enumerate}
Note that this characterization always appends a cell under the $x$-axis to the Schubert path. We append it to make the following definition easier. Note also
that, as seen in Figure \ref{fig:sp8}, we always label the cells in $\mathcal P$ by one of $\bullet, \circ$ or $\ast$. The distribution of these symbols is justified as
follows.
\end{rem}

\begin{defn}
A cell $c$ in $\mathcal P$ and $i$ be the number of empty cells lying below $c$.  Then the cell $c$ is called a
\textit{critical cell} if either
\begin{enumerate}
\item $i=0$ or
\item $i>0$ and the first cell $d\in \mathcal P\cap \mathcal T$ on the right hand side of $c$ satisfies that the number of
cells on and below it is greater than or equal to  $i$.
\end{enumerate}
We denote the set of all critical cells of the Schubert path $\mathcal P$ by $\crit[k,\Tt]$. Moreover a critical cell which lies on top of  (possibly empty) tower is called an \textit{essential cell}. We denote the set of all essential cells by   $\ess[k,\Tt]$.
\end{defn}

Now we label the critical cells by $\ast$, the cells in $\mathcal P\cap \mathcal T$ by $\bullet$, and the others by $\circ$.
To illustrate these definitions, see the Schubert paths of $8$ in two distinct tower diagrams shown in Figure \ref{fig:sp8}.
Note that in the first diagram only the fourth and fifth asterisks from the left are essential cells whereas in the second
one the first, third and the fifth asterisks are essential.
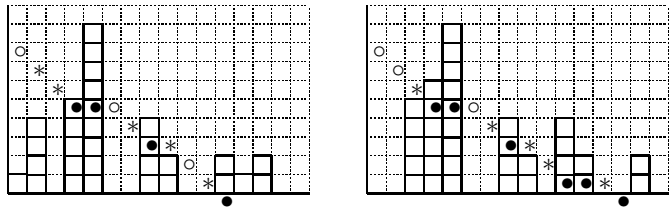
\begin{figure}[h]
\setlength{\unitlength}{0.25mm}
\begin{center}
\begin{picture}(80,60)
\multiput(0,0)(0,0){1}{\line(0,1){100}}
\multiput(0,0)(0,0){1}{\line(1,0){160}}
\multiput(0,10)(2,0){80}{\line(0,1){.1}}
\multiput(0,20)(2,0){80}{\line(0,1){.1}}
\multiput(0,30)(2,0){80}{\line(0,1){.1}}
\multiput(0,40)(2,0){80}{\line(0,1){.1}}
\multiput(0,50)(2,0){80}{\line(0,1){.1}}
\multiput(0,60)(2,0){80}{\line(0,1){.1}}
\multiput(0,70)(2,0){80}{\line(0,1){.1}}
\multiput(0,80)(2,0){80}{\line(0,1){.1}}
\multiput(0,90)(2,0){80}{\line(0,1){.1}}
\multiput(0,100)(2,0){80}{\line(0,1){.1}}
\multiput(10,0)(0,2){50}{\line(1,0){.1}}
\multiput(20,0)(0,2){50}{\line(1,0){.1}}
\multiput(30,0)(0,2){50}{\line(1,0){.1}}
\multiput(40,0)(0,2){50}{\line(1,0){.1}}
\multiput(50,0)(0,2){50}{\line(1,0){.1}}
\multiput(60,0)(0,2){50}{\line(1,0){.1}}
\multiput(70,0)(0,2){50}{\line(1,0){.1}}
\multiput(80,0)(0,2){50}{\line(1,0){.1}}
\multiput(90,0)(0,2){50}{\line(1,0){.1}}
\multiput(100,0)(0,2){50}{\line(1,0){.1}}
\multiput(110,0)(0,2){50}{\line(1,0){.1}}
\multiput(120,0)(0,2){50}{\line(1,0){.1}}
\multiput(130,0)(0,2){50}{\line(1,0){.1}}
\multiput(140,0)(0,2){50}{\line(1,0){.1}}
\multiput(150,0)(0,2){50}{\line(1,0){.1}}
\multiput(160,0)(0,2){50}{\line(1,0){.1}}
\put(0,0){\tableau{{}}}
\put(10,30){\tableau{{}\\{}\\{}\\{}}}
\put(30,40){\tableau{{}\\{}\\{}\\{}\\{}}}
\put(40,80){\tableau{{}\\{}\\{}\\{}\\{}\\{}\\{}\\{}\\{}}}
\put(70,30){\tableau{{}\\{}\\{}\\ {}}}
\put(80,20){\tableau{\\{}\\{}}} \put(110,10){\tableau{{}\\{}}}
\put(120,0){\tableau{{}}}\put(130,10){\tableau{{}\\{}}}
\multiput(3,72)(0,0){1}{$\circ$}
\multiput(13,62)(0,0){1}{$\ast$}
\multiput(23,52)(0,0){1}{$\ast$}
 \multiput(33,42)(0,0){1}{$\bullet$}
\multiput(43,42)(0,0){1}{$\bullet$}
\multiput(53,42)(0,0){1}{$\circ$}
\multiput(63,32)(0,0){1}{$\ast$}
\multiput(73,22)(0,0){1}{$\bullet$}
\multiput(83,22)(0,0){1}{$\ast$}
\multiput(93,12)(0,0){1}{$\circ$}
\multiput(103,2)(0,0){1}{$\ast$}
\multiput(113,-8)(0,0){1}{$\bullet$}
\end{picture}
\hspace{1in}
\begin{picture}(100,100)
\multiput(0,0)(0,0){1}{\line(0,1){100}}
\multiput(0,0)(0,0){1}{\line(1,0){160}}
\multiput(0,10)(2,0){80}{\line(0,1){.1}}
\multiput(0,20)(2,0){80}{\line(0,1){.1}}
\multiput(0,30)(2,0){80}{\line(0,1){.1}}
\multiput(0,40)(2,0){80}{\line(0,1){.1}}
\multiput(0,50)(2,0){80}{\line(0,1){.1}}
\multiput(0,60)(2,0){80}{\line(0,1){.1}}
\multiput(0,70)(2,0){80}{\line(0,1){.1}}
\multiput(0,80)(2,0){80}{\line(0,1){.1}}
\multiput(0,90)(2,0){80}{\line(0,1){.1}}
\multiput(0,100)(2,0){80}{\line(0,1){.1}}
\multiput(10,0)(0,2){50}{\line(1,0){.1}}
\multiput(20,0)(0,2){50}{\line(1,0){.1}}
\multiput(30,0)(0,2){50}{\line(1,0){.1}}
\multiput(40,0)(0,2){50}{\line(1,0){.1}}
\multiput(50,0)(0,2){50}{\line(1,0){.1}}
\multiput(60,0)(0,2){50}{\line(1,0){.1}}
\multiput(70,0)(0,2){50}{\line(1,0){.1}}
\multiput(80,0)(0,2){50}{\line(1,0){.1}}
\multiput(90,0)(0,2){50}{\line(1,0){.1}}
\multiput(100,0)(0,2){50}{\line(1,0){.1}}
\multiput(110,0)(0,2){50}{\line(1,0){.1}}
\multiput(120,0)(0,2){50}{\line(1,0){.1}}
\multiput(130,0)(0,2){50}{\line(1,0){.1}}
\multiput(140,0)(0,2){50}{\line(1,0){.1}}
\multiput(150,0)(0,2){50}{\line(1,0){.1}}
\multiput(160,0)(0,2){50}{\line(1,0){.1}}
\put(20,40){\tableau{{}\\{}\\{}\\{}\\{}}}
\put(30,50){\tableau{{}\\{}\\{}\\{}\\{}\\{}}}
\put(40,80){\tableau{{}\\{}\\{}\\{}\\{}\\{}\\{}\\{}\\{}}}
\put(70,30){\tableau{{}\\{}\\{}\\ {}}}
\put(80,20){\tableau{\\{}\\{}}}
\put(100,30){\tableau{{}\\{}\\{}\\{}}}
\put(110,10){\tableau{{}\\{}}} \put(140,10){\tableau{{}\\{}}}
\multiput(3,72)(0,0){1}{$\circ$}
\multiput(13,62)(0,0){1}{$\circ$}
 \multiput(23,52)(0,0){1}{$\ast$}
\multiput(33,42)(0,0){1}{$\bullet$}
\multiput(43,42)(0,0){1}{$\bullet$}
\multiput(53,42)(0,0){1}{$\circ$}
\multiput(63,32)(0,0){1}{$\ast$}
\multiput(73,22)(0,0){1}{$\bullet$}
 \multiput(83,22)(0,0){1}{$\ast$}
\multiput(93,12)(0,0){1}{$\ast$}
\multiput(103,2)(0,0){1}{$\bullet$}
\multiput(113,2)(0,0){1}{$\bullet$}
\multiput(123,2)(0,0){1}{$\ast$}
\multiput(133,-8)(0,0){1}{$\bullet$}
\end{picture}
\end{center}
\caption{Schubert paths of $8$ in two different tower diagrams}\label{fig:sp8}
\end{figure}

The next result is the fundamental property of the Schubert path of $k$ in $\mathcal T$. It relates sliding of a
transposition and the $k$-Bruhat order.

\begin{lem}\label{lem:funda}
Let $\mathcal T$ be a tower diagram with $\omega = \omega_\mathcal T$, and let $t_{i,j}$ be a transposition such that
$l(\omega t_{i,j}) = l(\omega) +1$, and let $e_0, \ldots, e_r$ be the new cells added to the tower with $\omega$-index
$i$. Then the following statements are equivalent.
\begin{enumerate}
\item $i\le k< j$.
\item One of the cells $e_0, \ldots, e_r$ lies in the Schubert path of $k$ in $\mathcal T$.
\end{enumerate}
Moreover, in this case, the cell from the Schubert path is critical.
\end{lem}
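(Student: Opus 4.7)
The plan is to reduce everything to a pair of flight-number computations in $\mathcal T$. First I would establish $\fn(\mathcal T, e_0) = i$, which is immediate from the hypothesis that $\mathcal T_s$ has $\omega$-index $i$. Next I would compute $\fn(\mathcal T, c^*) = j$, where $c^*$ denotes the empty cell immediately above $e_r$ in $\mathcal T$. For this, I observe that $\mathcal T$ and $\mathcal T' := \mathcal T_{\omega t_{i,j}}$ agree in every column strictly to the left of $s$, so the flight path of $c^*$ is identical in both diagrams; in $\mathcal T'$ the cell $c^*$ sits directly on top of the enlarged tower $\mathcal T'_s$, whose $\omega'$-index is $j$ by the Remark following Theorem \ref{thm:plus1}. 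Combined with the strict monotonicity of flight numbers within a column, this yields a strictly increasing sequence $i = f_0 < f_1 < \cdots < f_r < j =: f_{r+1}$, writing $f_h = \fn(\mathcal T, e_h)$.

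The equivalence (1) $\Leftrightarrow$ (2) is then essentially formal. By definition, the Schubert path cell at column $s$ is the maximum-height cell in $\mathcal V_s$ whose flight number is at most $k$. For (1) $\Rightarrow$ (2), I take the unique $h \in \{0, \dots, r\}$ with $f_h \le k < f_{h+1}$; then $e_h$ is exactly this maximum-height cell, so $e_h \in \mathcal P$. Conversely, $e_h \in \mathcal P$ forces $i = f_0 \le f_h \le k$ and, by maximality, $k < f_{h+1} \le j$. The criticality claim splits into two cases: when $h = 0$ no empty cells separate $e_0$ from the top of $\mathcal T_s$, so $i_c = 0$ and $e_0$ is critical by the first clause. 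When $h > 0$ the path leaves $e_h$ heading south-east, descends one level per column, and must first enter $\mathcal T$ at some cell $d = (s + p, \mathcal T_s + h - p)$ with $p \ge 1$; criticality then demands $p \le \mathcal T_s + 1$.

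To establish this bound I would dissect the sliding of $\fh_{i, j-1}$ into $\mathcal T$ after its terminal broken($+$) pass at $\mathcal T_s$. The detached foot, consisting of $r$ cells with slides $s + \mathcal T_s + 1, \dots, s + \mathcal T_s + r$, must net-delete $r$ cells from the tower $\mathcal T_l$ of $\omega$-index $j$, and running through the direct, zigzag, and eventual deletion passes of these cells on the way from $\mathcal T_s$ to $\mathcal T_l$ pins down the height profile of the intermediate towers. In particular, the foot cell of slide $s + \mathcal T_s + h$ follows a column-by-column trajectory that coincides with the Schubert path's descent from $e_h$, and the requirement that the foot terminate inside $\mathcal T_l$ translates into the desired inequality $p \le \mathcal T_s + 1$. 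The main obstacle I expect is precisely this correspondence: once the descent of the Schubert path is aligned with the sliding of the appropriate foot cell, criticality is forced, but extracting the quantitative height bound from the basic passes of Section \ref{sec:slidetrans} requires a careful case analysis that is not spelled out there in this exact form.
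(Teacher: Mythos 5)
Your argument for the equivalence $(1)\Leftrightarrow(2)$ is correct and takes a genuinely different route from the paper. The paper proves $(1)\Rightarrow(2)$ dynamically, by tracing the Schubert path cell along the sliding trajectory of the hook $\fh_{i,j-1}$ and arguing case by case that a cell of the Schubert path always persists in the foot or leg; it proves $(2)\Rightarrow(1)$ by a flight-number comparison in $\mathcal U=\mathcal T_{\omega t_{i,j}}$. You replace the whole thing by the static identities $\fn(\mathcal T,e_0)=i$ and $\fn(\mathcal T,c^*)=j$, the latter obtained by observing that the flight path of $c^*$ depends only on the columns strictly to the left of $s$, where $\mathcal T$ and $\mathcal T'$ agree. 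Combined with the strict monotonicity of flight numbers along a column (Remark 3, part 2), the equivalence drops out of locating $k$ in the increasing sequence $i=f_0<\cdots<f_r<f_{r+1}=j$. This is cleaner than the paper's dynamic argument, and it also implicitly fixes a small imprecision in the paper, which mixes flight numbers in $\mathcal T$ and $\mathcal U$ without comment.

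The criticality claim, however, has a genuine gap. You correctly reduce it (when $h>0$) to the bound $p\le\mathcal T_s+1$, where $p$ is the horizontal distance from $e_h$ to the first $\bullet$ of the Schubert path, but you do not prove that bound; you only sketch that it should follow by aligning the Schubert path's descent with the sliding of the appropriate foot cell after the broken($+$) pass, and you explicitly flag this alignment as the missing piece. That missing piece is precisely what the paper supplies (tersely) in its proof: after the broken($+$) pass, the foot must delete exactly $r$ cells (Remark 4.3 and Theorem 4.4), and these deletions force the foot to either remove cells from, or zigzag-pass, the tower containing $d$; in either case that tower is tall enough for the criticality condition. Since you abandoned the dynamic sliding picture for the equivalence, you cannot reuse it for free here, and the reduction-to-$p\le\mathcal T_s+1$ alone does not close the argument. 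The proposal therefore proves the equivalence but not the final criticality assertion.
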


\begin{proof}
Let $\mathcal U = \mathcal T_{\omega t_{i,j}}$. Notice that by Corollary \ref{cor: inc-dec}, in $\mathcal U$, the cell
$e_0$ has flight number $i$ and the bottom empty cell $c$ over $e_r$ has flight number $j$. Now, first, suppose that
$e_t$ lies in the Schubert path of $k$ in $\mathcal T$ for some $1\le t\le r$. By the definition of Schubert path,
$\fn(\mathcal U, e_t)\le k$ and any other cell over $e_t$ has flight number at least $k+1$. Also by the definition of
flight numbers, we have
$\fn(\mathcal U, e_0) < \fn(\mathcal U, e_t)<\fn(\mathcal U, c)$. Thus, we get $i\le k<j$, as required.

Conversely, suppose $i\le k<j$. Since the length of the permutation is increased by one, we have that the
size of $\Tt$ is increased by one after the sliding $\fh_{i,j-1}\searrow \Tt$.

Since $i\le k<j$, let $c^\prime$ be the cell in the foot of the hook $\fh_{i,j}$ having slide $k$, when the sliding begins.
Then, it is clear from the definition of the Schubert path that, initially, the sliding of the cell $c^\prime$ traces the
Schubert path of $k$. Now it is sufficient to show that the Schubert path is traced by one of the cells of the foot.
By Theorem \ref{thm:plus1}, this case no broken(-) pass occurs. Thus, if the heel $h$ of the hook passes a tower
directly, then $c^\prime$ passes it directly, too, and after such a pass, the leg still has a cell from the Schubert path,
namely $c^\prime$. Or else, if the hook makes a zigzag pass through a tower, then the diagonal that the heel is sliding
is shifted up by one cell, but from its definition, there is also a shift in the Schubert path of $k$. Thus the leg still contains
a cell from it. Finally, if the hook makes a shrunken pass at a tower, that is, the heel $h$ makes a zigzag pass, but one
of the cells in the foot does not, and hence is deleted from the foot and the leg. If the cell deleted from the foot is on the
right of $c^\prime$, then we are in the previous case, since $c^\prime$ makes a zigzag, too. If the deleted cell is on the
left of $c^\prime$, then we are in the first case, since $c^\prime$ makes a direct pass. Finally, if the deleted
cell is the cell $c^\prime$, then from the definition of the shrunken pass, the cell just on the left of $c^\prime$ is shifted
up to replace $c^\prime$. In this case, the Schubert path is traced by this cell, as seen from its definition. Hence, we
again have a cell from the Schubert path which lies in the heel or in the foot.

Thus, in any case, the foot contains a cell from the Schubert path, and since the hook is symmetric, the leg contains a
cell from the Schubert path as well. Therefore, when the leg lands on a tower, forming the cells $e_0, \ldots, e_r$, one of
these cells must be contained in the Schubert path of $k$ in $\mathcal T$.

Now suppose $i\le k<j$ and $e_0, \ldots, e_r$ be as above. Suppose $e_t$ is contained in the Schubert path of $k$
in $\mathcal T$. We need to show that $e_t$ is a critical cell. By Theorem \ref{thm:plus1}, there are two cases.

In the first case, sliding of the hook must contain enough number of shrunken passes so that the hook is
reduced to a single cell which adds, at the end, a new cell to the diagram. Hence it must be located on top of a tower
of $\mathcal T$ and hence it must be an essential cell, and hence critical.

In the second case, that is, if the sliding sequence contains  a broken(+) pass at some tower $\mathcal T_\alpha$ of
$\Tt$, the heel and the leg  of the hook, which arrived to that tower through direct, zigzag or shrunken pass, is placed at
top of a tower, say $\mathcal T_\alpha$. Moreover its foot being either a single cell or a row permutation deletes as
many cells as its size from the top of a tower on the right of $\mathcal T_\alpha$. Let $c$ be the cell in $\Tt_\alpha$
contained in the Schubert path of $k$ on $\Tt$. We need to justify that it is critical.

To prove this claim, we should show that the next  bullet $d$ on the right of $c$ has enough number of cells below.
But, this is trivial, since we know that the number of cells in $\mathcal T$ should be increased by one, and hence the
foot should delete as many cells as its length. Thus, either the foot deletes cells from the tower of $d$ or it passes that
tower with zigzag pass. Hence in both cases $d$ must have enough cells, and the claim is proved.
\end{proof}

Now we introduce an algorithm which determines the set $k\cdot\mathcal T$. By the above lemma, this set can be
divided into disjoint subsets according to critical cells. More precisely, to construct the set $k\cdot\mathcal T$, we can consider
each
critical cell separately and determine all possible additions to the tower containing this cell.  To introduce the algorithm,
for each critical cell $c\in\crit[k,\mathcal T]$, define $c\cdot\mathcal T$ as the set of tower diagrams in
$k\cdot\mathcal T$ where the new cell is added to the tower containing $c$ so that we have
$$k\cdot \Tt = \bigcup_{c\in \crit[k,\Tt]} c\cdot \Tt. $$
Notice that there might exists critical cells where the set $c\cdot\mathcal T$ is empty. Now the set $c\cdot \mathcal T$
can be determined via the following algorithm. Suppose $c$ lies above the tower $\Tt_l$ of $\Tt$ and $t\geq 0$ is the
number of empty cells lying below $c$. Denote by $e_0,\ldots, e_t, e_{t+1},\ldots$ those empty cells lying above $\Tt_l$
from bottom to top, where $e_t=c$.  Also let $s\geq 0$ be the number of cells lying in $\Tt$ and above the first bullet
which is right next to $c$.

\smallskip
\noindent\textsc{Monk's Algorithm}.

\noindent\textsc{Step 1}. Put $r=0$ and let $\widetilde{c\cdot\mathcal T}$ be the empty set.

\noindent\textsc{Step 2}. Adjoin the cells  $e_0, e_1, \ldots, e_i,\ldots,e_{i+r}$ to $\Tt$ and denote the new tower
diagram obtained in this way by $\mathcal U$.

\noindent\textsc{Step 3}.  Slide the cells $e_{i+r}, e_{i+r-1}, \ldots, e_1$  to $\Tt_{>l}=\mathcal U_{>l}$, in this order.
If each of these cells deletes a cell, then all these deleted cells must be the top $i$ cells of a unique tower on the right
hand side of $\Tt_l$. In this case, denote by $\mathcal U'$  the diagram obtained from $\mathcal U$ by removing
these cells and append $\mathcal U'$ to $\widetilde{c\cdot \Tt}$.

\noindent\textsc{Step 4}. Put $r=r+1$. If $r\le s+1$, back to Step 2. Otherwise terminate the algorithm.

This algorithm produces a set $\widetilde{c\cdot \Tt}$ and the equality $\widetilde{c\cdot \Tt} = c\cdot \Tt$ follows from
Lemma \ref{lem:funda}. Indeed, let $\mathcal U \in\Tt\wedge k$. Then $\omega_\mathcal U=\omega_\mathcal T \cdot
t_{i,j}$ for some $i\leq k<j$. Since the length of the permutation is increased by one, we have that the size of $\Tt$ is
increased by one after the sliding $\fh_{i, j-1}\searrow\Tt$. Thus by Lemma \ref{lem:funda}, $\mathcal U\in
\widetilde{c\cdot \Tt}$. Conversely, if $\mathcal U \in \widetilde{k\cdot\Tt}$, then Lemma \ref{lem:funda} implies that
$\omega_\mathcal U=\omega_\mathcal T \cdot t_{i,j}$ for some $i\leq k<j$ and hence $\mathcal U\in k\cdot
\mathcal T$. Hence we have completed the proof of the following theorem.

\begin{thm}[Monk's Rule]\label{mainthm}
Let $\omega$ be a permutation and let $\mathfrak S_\omega$ denote the Schubert polynomial of $\omega$. Also let
$s_k$ be an adjacent transposition. Then
\[
\mathfrak{S}_\omega \cdot \mathfrak S_{s_k} = \sum_{\mathcal T\in k\cdot\mathcal T_\omega}
\mathfrak S_{\omega_{\mathcal T}}
\]
\end{thm}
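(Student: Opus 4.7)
The plan is to reduce the theorem to the classical Monk's Rule by exhibiting a bijection between $\omega\wedge s_k$ and the set of tower diagrams produced by the Monk's Algorithm. Since, by the classical Monk's Rule,
\[
\mathfrak S_\omega\cdot \mathfrak S_{s_k}=\sum_{\omega'\in\omega\wedge s_k}\mathfrak S_{\omega'},
\]
and since the correspondence $\tau\mapsto \mathcal T_\tau$ is a bijection between permutations and tower diagrams by Theorem~\ref{thm:wordshape}, it suffices to show that the map $\omega'\mapsto \mathcal T_{\omega'}$ induces a bijection
\[
\omega\wedge s_k\;\longleftrightarrow\;\bigcup_{c\in\crit[k,\mathcal T_\omega]}\widetilde{c\cdot\mathcal T_\omega}.
\]

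First I would fix a permutation $\omega'=\omega\cdot t_{i,j}\in\omega\wedge s_k$, so $i\le k<j$ and $l(\omega')=l(\omega)+1$. By Theorem~\ref{thm:plus1} and Corollary~\ref{cor: inc-dec} applied to $\mathcal T_\omega$ and the hook $\fh_{i,j-1}$, the tower diagram $\mathcal T_{\omega'}$ is obtained from $\mathcal T_\omega$ by adding consecutively $r+1$ cells $e_0,\ldots,e_r$ on top of a single tower $\mathcal T_l$ (with $e_0$ on the bottom) and, when $r\ge 1$, removing the top $r$ cells of a single tower $\mathcal T_s$ with $s>l$. Applying Lemma~\ref{lem:funda} to this setup, exactly one of the newly added cells $e_0,\ldots,e_r$ lies in the Schubert path $\scp(k,\mathcal T_\omega)$, and that cell is critical. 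Hence there is a well-defined critical cell $c=c(\omega')$ attached to $\omega'$, and the pair $(c,\mathcal T_{\omega'})$ falls within the output of the Monk's Algorithm: Step 2 of the algorithm creates exactly the towers with the $r+1$ prescribed cells added, and Step 3 then performs the sliding that produces the deletion of $r$ cells from a single tower to the right, matching the effect of $\fh_{i,j-1}\searrow \mathcal T_\omega$. Thus $\mathcal T_{\omega'}\in\widetilde{c\cdot\mathcal T_\omega}$.

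Conversely, given any $\mathcal U\in\widetilde{c\cdot\mathcal T_\omega}$ produced by the algorithm at a critical cell $c$ above a tower $\mathcal T_l$ with parameters $(i,r)$, I would use Theorem~\ref{thm:plus1} in the opposite direction: the configuration produced by Steps 2 and 3 is exactly the effect of sliding the hook $\fh_{i,j-1}$ into $\mathcal T_\omega$ for a suitable $j$, so by Corollary~\ref{cor: inc-dec} we have $\mathcal U=\mathcal T_{\omega\cdot t_{i,j}}$ with $l(\omega\cdot t_{i,j})=l(\omega)+1$. Lemma~\ref{lem:funda}, applied in the ``one of $e_0,\ldots,e_r$ lies in the Schubert path'' direction, forces $i\le k<j$, giving $\omega_{\mathcal U}\in\omega\wedge s_k$. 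The main obstacle is verifying that the two maps above are mutually inverse at the algorithmic level: this reduces to showing that when the same critical cell $c$ and parameter $r$ appear, the resulting hook parameters $(i,j)$ are uniquely determined by the geometry of $\scp(k,\mathcal T_\omega)$ near $c$. This is precisely the content of the tracking argument in the proof of Lemma~\ref{lem:funda}, which shows that a cell in the leg of the hook follows the Schubert path through direct, zigzag and shrunken passes, so $i$ is read off as the flight number of $e_0$ in $\mathcal U$ and $j$ is read off as the flight number in $\mathcal U$ of the bottom empty cell above $\mathcal U_s$. Combining the two directions yields the claimed bijection, and Monk's Rule follows immediately.
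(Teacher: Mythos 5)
Your proposal is correct and follows essentially the same route as the paper: it reduces to the classical Monk's rule via the permutation--tower-diagram correspondence and identifies the output of Monk's Algorithm with $k\cdot\mathcal T_\omega$ using Lemma~\ref{lem:funda} together with Theorem~\ref{thm:plus1} and Corollary~\ref{cor: inc-dec}, which is exactly how the paper argues the equality $\widetilde{c\cdot\mathcal T}=c\cdot\mathcal T$ before stating the theorem. The level of detail you give (including the remark that at most one new cell can meet the Schubert path and that the hook parameters are recovered from flight numbers) matches or slightly exceeds the paper's own brief verification.
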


\begin{ex}\label{ex:ess2} Let $w=[1, 2, 5, 6, 4, 10, 3, 8, 7, 11, 9]$ be a permutation in one line notation. The Schubert
path of $5$ together with critical cells $b,c$ and $d$ are shown below.  Here the first bullet to
the right of $b$ has only one cell above. Therefore the algorithm works for $r=0,1$ but  in case of $r=0$, sliding the cells
$e_3=b, e_2$ and $e_1$ actually produce three new cells in the diagram.
Therefore only for $r=1$ we have a diagram which is obtained by adding $e_4,e_3=b, e_2,e_1,e_0$ and deleting the
four cells from the right next tower.
\begin{figure}[h]
\vspace{.3in}
\setlength{\unitlength}{0.25mm}
\begin{picture}(80,60)
\multiput(0,0)(0,0){1}{\line(0,1){80}}
\multiput(0,0)(0,0){1}{\line(1,0){100}}
\multiput(0,10)(2,0){50}{\line(0,1){.1}}
\multiput(0,20)(2,0){50}{\line(0,1){.1}}
\multiput(0,30)(2,0){50}{\line(0,1){.1}}
\multiput(0,40)(2,0){50}{\line(0,1){.1}}
\multiput(0,50)(2,0){50}{\line(0,1){.1}}
\multiput(0,60)(2,0){50}{\line(0,1){.1}}
\multiput(0,70)(2,0){50}{\line(0,1){.1}}
\multiput(0,80)(2,0){50}{\line(0,1){.1}}
\multiput(10,0)(0,2){40}{\line(1,0){.1}}
\multiput(20,0)(0,2){40}{\line(1,0){.1}}
\multiput(30,0)(0,2){40}{\line(1,0){.1}}
\multiput(40,0)(0,2){40}{\line(1,0){.1}}
\multiput(50,0)(0,2){40}{\line(1,0){.1}}
\multiput(60,0)(0,2){40}{\line(1,0){.1}}
\multiput(70,0)(0,2){40}{\line(1,0){.1}}
\multiput(80,0)(0,2){40}{\line(1,0){.1}}
\multiput(90,0)(0,2){40}{\line(1,0){.1}}
\multiput(100,0)(0,2){40}{\line(1,0){.1}}
\put(20,30){\tableau{{}\\{}\\{}\\ {}}}
\put(30,20){\tableau{\\{}\\{}}} \put(60,10){\tableau{{}\\{}}}
\put(70,0){\tableau{{}}} \put(80,10){\tableau{{}\\{}}}
\multiput(4,42)(0,0){1}{$\circ$}
\multiput(14,32)(0,0){1}{$\ast$}
\multiput(24,22)(0,0){1}{$\bullet$}
\multiput(34,22)(0,0){1}{$\ast$}
\multiput(44,12)(0,0){1}{$\circ$}
\multiput(54,2)(0,0){1}{$\ast$}
\multiput(64,-8)(0,0){1}{$\bullet$}
\multiput(15,62)(0,0){1}{\vector(0,-1){24}}
\multiput(35,62)(0,0){1}{\vector(0,-1){34}}
\multiput(55,62)(0,0){1}{\vector(0,-1){44}}
\multiput(15,65)(0,0){1}{$b$}
\multiput(35,65)(0,0){1}{$c$}
\multiput(55,65)(0,0){1}{$d$}
\end{picture}\hspace{.5in}
\begin{picture}(80,60)
 \multiput(0,0)(0,0){1}{\line(0,1){80}}
 \multiput(0,0)(0,0){1}{\line(1,0){100}}
 \multiput(0,10)(2,0){50}{\line(0,1){.1}}
 \multiput(0,20)(2,0){50}{\line(0,1){.1}}
 \multiput(0,30)(2,0){50}{\line(0,1){.1}}
 \multiput(0,40)(2,0){50}{\line(0,1){.1}}
 \multiput(0,50)(2,0){50}{\line(0,1){.1}}
 \multiput(0,60)(2,0){50}{\line(0,1){.1}}
 \multiput(0,70)(2,0){50}{\line(0,1){.1}}
 \multiput(0,80)(2,0){50}{\line(0,1){.1}}
 \multiput(10,0)(0,2){40}{\line(1,0){.1}}
 \multiput(20,0)(0,2){40}{\line(1,0){.1}}
 \multiput(30,0)(0,2){40}{\line(1,0){.1}}
 \multiput(40,0)(0,2){40}{\line(1,0){.1}}
 \multiput(50,0)(0,2){40}{\line(1,0){.1}}
 \multiput(60,0)(0,2){40}{\line(1,0){.1}}
 \multiput(70,0)(0,2){40}{\line(1,0){.1}}
 \multiput(80,0)(0,2){40}{\line(1,0){.1}}
 \multiput(90,0)(0,2){40}{\line(1,0){.1}}
 \multiput(100,0)(0,2){40}{\line(1,0){.1}}
\put(10,30){\tableau{{_{e_3}}\\{_{e_2}}\\ {_{e_1}}\\{_{e_0}}}}
 \put(20,30){\tableau{{}\\{}\\{}\\ {}}}
 \put(30,10){\tableau{{}\\{}}} \put(60,10){\tableau{{}\\{}}}
 \put(70,0){\tableau{{}}} \put(80,10){\tableau{{}\\{}}}
  \multiput(10,40)(0,0){1}{\line(1,-1){25}}
   \multiput(10,30)(0,0){1}{\line(1,-1){25}}
    \multiput(10,20)(0,0){1}{\line(1,-1){15}}
    \multiput(25,5)(0,0){1}{\line(0,1){30}}
    \multiput(35,5)(0,0){1}{\line(0,1){20}}
     \multiput(25,35)(0,0){1}{\vector(1,-1){15}}\put(32,22){$\bullet$}\put(42,2){$\bullet$}\put(52,2){$\bullet$}
    \multiput(30,30)(0,0){1}{\vector(1,-1){25}}
    \multiput(30,20)(0,0){1}{\vector(1,-1){15}}
\put(40,82){$r=0$}
\end{picture}\hspace{.5in}
\begin{picture}(80,60)
 \multiput(0,0)(0,0){1}{\line(0,1){80}}
 \multiput(0,0)(0,0){1}{\line(1,0){100}}
 \multiput(0,10)(2,0){50}{\line(0,1){.1}}
 \multiput(0,20)(2,0){50}{\line(0,1){.1}}
 \multiput(0,30)(2,0){50}{\line(0,1){.1}}
 \multiput(0,40)(2,0){50}{\line(0,1){.1}}
 \multiput(0,50)(2,0){50}{\line(0,1){.1}}
 \multiput(0,60)(2,0){50}{\line(0,1){.1}}
 \multiput(0,70)(2,0){50}{\line(0,1){.1}}
 \multiput(0,80)(2,0){50}{\line(0,1){.1}}
 \multiput(10,0)(0,2){40}{\line(1,0){.1}}
 \multiput(20,0)(0,2){40}{\line(1,0){.1}}
 \multiput(30,0)(0,2){40}{\line(1,0){.1}}
 \multiput(40,0)(0,2){40}{\line(1,0){.1}}
 \multiput(50,0)(0,2){40}{\line(1,0){.1}}
 \multiput(60,0)(0,2){40}{\line(1,0){.1}}
 \multiput(70,0)(0,2){40}{\line(1,0){.1}}
 \multiput(80,0)(0,2){40}{\line(1,0){.1}}
 \multiput(90,0)(0,2){40}{\line(1,0){.1}}
 \multiput(100,0)(0,2){40}{\line(1,0){.1}}
 \put(10,40){\tableau{{_{e_4}}\\{_{e_3}}\\{_{e_2}}\\ {_{e_1}}\\{_{e_0}}}}
  \put(20,30){\tableau{{\varnothing}\\{\varnothing}\\{\varnothing}\\ {\varnothing}}}
 \put(30,10){\tableau{{}\\{}}} \put(60,10){\tableau{{}\\{}}}
 \put(70,0){\tableau{{}}} \put(80,10){\tableau{{}\\{}}}
 \multiput(10,50)(0,0){1}{\vector(1,-1){15}}
  \multiput(10,40)(0,0){1}{\vector(1,-1){15}}
   \multiput(10,30)(0,0){1}{\vector(1,-1){15}}
    \multiput(10,20)(0,0){1}{\vector(1,-1){15}}
\put(40,82){$r=1$}
\end{picture}
\end{figure}

For the critical cell $c$, observe that the first bullet to the right of $c$ is below the $x$-axis and $\Tt$ has two cells lying above this bullet. Therefore the algorithm works for $r=0,1,2$ and produces a new diagram in each case, as explained below.
\begin{figure}[h]
\vspace{.3in}
\setlength{\unitlength}{0.25mm}
 \begin{picture}(80,60)
 \multiput(0,0)(0,0){1}{\line(0,1){80}}
 \multiput(0,0)(0,0){1}{\line(1,0){100}}
 \multiput(0,10)(2,0){50}{\line(0,1){.1}}
 \multiput(0,20)(2,0){50}{\line(0,1){.1}}
 \multiput(0,30)(2,0){50}{\line(0,1){.1}}
 \multiput(0,40)(2,0){50}{\line(0,1){.1}}
 \multiput(0,50)(2,0){50}{\line(0,1){.1}}
 \multiput(0,60)(2,0){50}{\line(0,1){.1}}
 \multiput(0,70)(2,0){50}{\line(0,1){.1}}
 \multiput(0,80)(2,0){50}{\line(0,1){.1}}
 \multiput(10,0)(0,2){40}{\line(1,0){.1}}
 \multiput(20,0)(0,2){40}{\line(1,0){.1}}
 \multiput(30,0)(0,2){40}{\line(1,0){.1}}
 \multiput(40,0)(0,2){40}{\line(1,0){.1}}
 \multiput(50,0)(0,2){40}{\line(1,0){.1}}
 \multiput(60,0)(0,2){40}{\line(1,0){.1}}
 \multiput(70,0)(0,2){40}{\line(1,0){.1}}
 \multiput(80,0)(0,2){40}{\line(1,0){.1}}
 \multiput(90,0)(0,2){40}{\line(1,0){.1}}
 \multiput(100,0)(0,2){40}{\line(1,0){.1}}
 \put(20,30){\tableau{{}\\{}\\{}\\ {}}}
 \put(30,20){\tableau{{_{e_0}}\\{}\\{}}} \put(60,10){\tableau{{}\\{}}}
 \put(70,0){\tableau{{}}} \put(80,10){\tableau{{}\\{}}}
\put(40,82){$r=0$}
\end{picture}\hspace{.2in}
\begin{picture}(80,60)
\multiput(0,0)(0,0){1}{\line(0,1){80}}
\multiput(0,0)(0,0){1}{\line(1,0){100}}
\multiput(0,10)(2,0){50}{\line(0,1){.1}}
\multiput(0,20)(2,0){50}{\line(0,1){.1}}
\multiput(0,30)(2,0){50}{\line(0,1){.1}}
\multiput(0,40)(2,0){50}{\line(0,1){.1}}
\multiput(0,50)(2,0){50}{\line(0,1){.1}}
\multiput(0,60)(2,0){50}{\line(0,1){.1}}
\multiput(0,70)(2,0){50}{\line(0,1){.1}}
\multiput(0,80)(2,0){50}{\line(0,1){.1}}
\multiput(10,0)(0,2){40}{\line(1,0){.1}}
\multiput(20,0)(0,2){40}{\line(1,0){.1}}
\multiput(30,0)(0,2){40}{\line(1,0){.1}}
\multiput(40,0)(0,2){40}{\line(1,0){.1}}
\multiput(50,0)(0,2){40}{\line(1,0){.1}}
\multiput(60,0)(0,2){40}{\line(1,0){.1}}
\multiput(70,0)(0,2){40}{\line(1,0){.1}}
\multiput(80,0)(0,2){40}{\line(1,0){.1}}
\multiput(90,0)(0,2){40}{\line(1,0){.1}}
\multiput(100,0)(0,2){40}{\line(1,0){.1}}
\put(20,30){\tableau{{}\\{}\\{}\\ {}}}
\put(30,30){\tableau{{_{e_1}}\\{_{e_0}}\\{}\\{}}}
\put(60,10){\tableau{{}\\{}}}\put(70,0){\tableau{{}}}
\put(72,2){$\varnothing$}\put(80,10){\tableau{{}\\{}}}
\multiput(35,35)(0,0){1}{\line(1,-1){30}}
\multiput(65,5)(0,0){1}{\line(0,1){10}}
\multiput(65,15)(0,0){1}{\vector(1,-1){15}}
\put(40,82){$r=1$}
\end{picture}\hspace{.2in}
\begin{picture}(80,60,120)
\multiput(0,0)(0,0){1}{\line(0,1){80}}
\multiput(0,0)(0,0){1}{\line(1,0){100}}
\multiput(0,10)(2,0){50}{\line(0,1){.1}}
\multiput(0,20)(2,0){50}{\line(0,1){.1}}
\multiput(0,30)(2,0){50}{\line(0,1){.1}}
\multiput(0,40)(2,0){50}{\line(0,1){.1}}
\multiput(0,50)(2,0){50}{\line(0,1){.1}}
\multiput(0,60)(2,0){50}{\line(0,1){.1}}
\multiput(0,70)(2,0){50}{\line(0,1){.1}}
\multiput(0,80)(2,0){50}{\line(0,1){.1}}
\multiput(10,0)(0,2){40}{\line(1,0){.1}}
\multiput(20,0)(0,2){40}{\line(1,0){.1}}
\multiput(30,0)(0,2){40}{\line(1,0){.1}}
\multiput(40,0)(0,2){40}{\line(1,0){.1}}
\multiput(50,0)(0,2){40}{\line(1,0){.1}}
\multiput(60,0)(0,2){40}{\line(1,0){.1}}
\multiput(70,0)(0,2){40}{\line(1,0){.1}}
\multiput(80,0)(0,2){40}{\line(1,0){.1}}
\multiput(90,0)(0,2){40}{\line(1,0){.1}}
\multiput(100,0)(0,2){40}{\line(1,0){.1}}
\put(20,30){\tableau{{}\\{}\\{}\\ {}}}
\put(30,40){\tableau{{_{e_2}}\\{_{e_1}}\\{_{e_0}}\\{}\\{}}}\put(60,10){\tableau{{}\\{}}}
\put(62,2){$\varnothing$}  \put(62,12){$\varnothing$}
\put(70,0){\tableau{{}}} \put(80,10){\tableau{{}\\{}}}
\multiput(35,45)(0,0){1}{\vector(1,-1){30}}
\multiput(35,35)(0,0){1}{\vector(1,-1){30}}
\put(40,82){$r=2$}
\end{picture}
\end{figure}

Now for the critical cell $d$ still the algorithm works for $r=0,1,2$ and again it produces a new diagram in each case.

\begin{figure}[h]
\setlength{\unitlength}{0.25mm}
\begin{picture}(80,60)
 \multiput(0,0)(0,0){1}{\line(0,1){80}}
 \multiput(0,0)(0,0){1}{\line(1,0){100}}
 \multiput(0,10)(2,0){50}{\line(0,1){.1}}
 \multiput(0,20)(2,0){50}{\line(0,1){.1}}
 \multiput(0,30)(2,0){50}{\line(0,1){.1}}
 \multiput(0,40)(2,0){50}{\line(0,1){.1}}
 \multiput(0,50)(2,0){50}{\line(0,1){.1}}
 \multiput(0,60)(2,0){50}{\line(0,1){.1}}
 \multiput(0,70)(2,0){50}{\line(0,1){.1}}
 \multiput(0,80)(2,0){50}{\line(0,1){.1}}
 \multiput(10,0)(0,2){40}{\line(1,0){.1}}
 \multiput(20,0)(0,2){40}{\line(1,0){.1}}
 \multiput(30,0)(0,2){40}{\line(1,0){.1}}
 \multiput(40,0)(0,2){40}{\line(1,0){.1}}
 \multiput(50,0)(0,2){40}{\line(1,0){.1}}
 \multiput(60,0)(0,2){40}{\line(1,0){.1}}
 \multiput(70,0)(0,2){40}{\line(1,0){.1}}
 \multiput(80,0)(0,2){40}{\line(1,0){.1}}
 \multiput(90,0)(0,2){40}{\line(1,0){.1}}
 \multiput(100,0)(0,2){40}{\line(1,0){.1}}
 \put(20,30){\tableau{{}\\{}\\{}\\ {}}}
 \put(30,10){\tableau{{}\\{}}} \put(60,10){\tableau{{}\\{}}}\put(50,0){\tableau{{_{e_0}}}}
 \put(70,0){\tableau{{}}} \put(80,10){\tableau{{}\\{}}}
\put(40,82){$r=0$}
\end{picture}\hspace{.2in}
\begin{picture}(100,100)
 \multiput(0,0)(0,0){1}{\line(0,1){80}}
 \multiput(0,0)(0,0){1}{\line(1,0){100}}
 \multiput(0,10)(2,0){50}{\line(0,1){.1}}
 \multiput(0,20)(2,0){50}{\line(0,1){.1}}
 \multiput(0,30)(2,0){50}{\line(0,1){.1}}
 \multiput(0,40)(2,0){50}{\line(0,1){.1}}
 \multiput(0,50)(2,0){50}{\line(0,1){.1}}
 \multiput(0,60)(2,0){50}{\line(0,1){.1}}
 \multiput(0,70)(2,0){50}{\line(0,1){.1}}
 \multiput(0,80)(2,0){50}{\line(0,1){.1}}
 \multiput(10,0)(0,2){40}{\line(1,0){.1}}
 \multiput(20,0)(0,2){40}{\line(1,0){.1}}
 \multiput(30,0)(0,2){40}{\line(1,0){.1}}
 \multiput(40,0)(0,2){40}{\line(1,0){.1}}
 \multiput(50,0)(0,2){40}{\line(1,0){.1}}
 \multiput(60,0)(0,2){40}{\line(1,0){.1}}
 \multiput(70,0)(0,2){40}{\line(1,0){.1}}
 \multiput(80,0)(0,2){40}{\line(1,0){.1}}
 \multiput(90,0)(0,2){40}{\line(1,0){.1}}
 \multiput(100,0)(0,2){40}{\line(1,0){.1}}
 \put(20,30){\tableau{{}\\{}\\{}\\ {}}}
 \put(30,10){\tableau{{}\\{}}} \put(60,10){\tableau{{}\\{}}}\put(50,10){\tableau{{_{e_1}}\\{_{e_0}}}}
 \put(80,10){\tableau{{}\\{}}}
\put(40,82){$r=1$}
\end{picture}\hspace{.2in}
\begin{picture}(100,100)
 \multiput(0,0)(0,0){1}{\line(0,1){80}}
 \multiput(0,0)(0,0){1}{\line(1,0){100}}
 \multiput(0,10)(2,0){50}{\line(0,1){.1}}
 \multiput(0,20)(2,0){50}{\line(0,1){.1}}
 \multiput(0,30)(2,0){50}{\line(0,1){.1}}
 \multiput(0,40)(2,0){50}{\line(0,1){.1}}
 \multiput(0,50)(2,0){50}{\line(0,1){.1}}
 \multiput(0,60)(2,0){50}{\line(0,1){.1}}
 \multiput(0,70)(2,0){50}{\line(0,1){.1}}
 \multiput(0,80)(2,0){50}{\line(0,1){.1}}
 \multiput(10,0)(0,2){40}{\line(1,0){.1}}
 \multiput(20,0)(0,2){40}{\line(1,0){.1}}
 \multiput(30,0)(0,2){40}{\line(1,0){.1}}
 \multiput(40,0)(0,2){40}{\line(1,0){.1}}
 \multiput(50,0)(0,2){40}{\line(1,0){.1}}
 \multiput(60,0)(0,2){40}{\line(1,0){.1}}
 \multiput(70,0)(0,2){40}{\line(1,0){.1}}
 \multiput(80,0)(0,2){40}{\line(1,0){.1}}
 \multiput(90,0)(0,2){40}{\line(1,0){.1}}
 \multiput(100,0)(0,2){40}{\line(1,0){.1}}
 \put(20,30){\tableau{{}\\{}\\{}\\ {}}}
 \put(30,10){\tableau{{}\\{}}} \put(50,20){\tableau{{_{e_2}}\\{_{e_1}}\\{_{e_0}}}}
 \put(70,0){\tableau{{}}} \put(80,10){\tableau{{}\\{}}}
\put(40,82){$r=2$}
\end{picture}
\end{figure}

Hence reading the permutations (in one line notation) from the above pictures, we get
\begin{eqnarray*}
5\cdot \Tt_{[1, 2, 5, 6, 4, 10, 3, 8, 7, 11, 9]}&=&\{ \Tt_{[1, 2, 5, 6, 10, 4, 3, 8, 7, 11, 9]}
, \Tt_{[1, 2, 5, 6, 8, 10, 3, 4, 7, 11, 9]}, \Tt_{[1, 2, 5, 6, 7, 10, 3, 8, 4, 11, 9]}, \\&& \Tt_{
[1, 3, 5, 6, 4, 10, 2, 8, 7, 11, 9]}, \Tt_{[1, 2, 5, 10, 4, 6, 3, 8, 7, 11, 9]}, \Tt_{[1, 2, 5, 8, 4, 10, 3, 6,
7, 11, 9]}, \\&&\Tt_{[1, 2, 5, 7, 4, 10, 3, 8, 6, 11, 9]}\}
\end{eqnarray*}

\end{ex}

\section{Pieri's Rule}\label{sec:pieri}

In this section, we introduce another algorithm which describes Pieri's rule in terms of tower diagrams. To begin with, fix $k\in \Nn$. Let $\omega\in S_n$ be a permutation and $a_1,b_1, a_2, b_2, \ldots, a_m, b_m$ be a sequence of
positive integers such that for each $1\le i \le m$,

\begin{enumerate}
\item[(1)] $a_i\le k < b_i$,
\item[(2)] $l(\omega t_{a_1,b_1}\ldots t_{a_i, b_i}) = l(\omega t_{a_1, b_1}\ldots t_{a_{i-1}, b_{i-1}}) +1$
\end{enumerate}

For each $1\le i\le m$, set $\omega^{(i)} = \omega t_{a_1,b_1}\ldots t_{a_i, b_i}$. We sometimes write
$\omega' = \omega^{(m)}$. Then the sequence of pairs
$(a,b): (a_1, b_1), (a_2, b_2),\ldots, (a_m, b_m)$ is called a \textit{saturated $k$-Bruhat chain} from $\omega$ to
$\omega'$, written $(a,b): \omega\to\omega'$.

A saturated $k$-Bruhat chain $(a,b): \omega\to \omega'$ is called a \textit{Sottile $r[k,m]$-sequence} (or \textit{Sottile r-
sequence}) if $|\{ b_1,\ldots, b_m \}| = m$. Similarly, a saturated  $k$-Bruhat chain $(a,b): \omega\to \omega'$ is called a
\textit{Sottile $c[k,m]$-sequence} (or \textit{Sottile c-sequence}) if $|\{ a_1,\ldots, a_m \}| = m$. With this notation, Pieri's rule can be stated as follows.

\begin{thm}[Sottile \cite{F}]
Let $\omega, \nu$ be permutations in $S_n$.
\begin{enumerate}
\item Let $m\in\Nn$ such that $k+m \le n$. Also let $r[k,m]$ denote the $(m+1)$-cycle $(k+m\, k+m-1\, \ldots\, k+1\, k)$.
Then the Littlewood-Richardson coefficient $c_{\omega, r[k,m]}^\nu$ is non-zero if and only if there exists a Sottile
$r[k,m]$-sequence  $(a,b):\omega\to \nu$.
\item Let $m\in\Nn$ such that $m \le k\le  n$. Also let $c[k,m]$ denote the $(m+1)$-cycle
$(k-m+1\, k-m+2\, \ldots\, k\, k+1)$.  Then the Littlewood-Richardson coefficient $c_{\omega, c[k,m]}^\nu$ is non-zero if
and only if there exists a Sottile $c[k,m]$-sequence  $(a,b):\omega\to \nu$.
\end{enumerate}
In both cases, $c_{\omega, r[k,m]}^\nu = 1$.
\end{thm}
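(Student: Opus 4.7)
The statement is Sottile's theorem \cite{F}, and the original proof is geometric. Given the combinatorial machinery just developed in this paper, however, the natural approach is to deduce it from Monk's rule (Theorem~\ref{mainthm}) by iteration and a sign-reversing involution. The plan is as follows.

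First, I would invoke the classical identifications $\mathfrak S_{r[k,m]} = h_m(x_1,\ldots,x_k)$ and $\mathfrak S_{c[k,m]} = e_m(x_1,\ldots,x_k)$ (the complete and elementary symmetric polynomials in the first $k$ variables). Using Newton's identities to express $h_m$ and $e_m$ as polynomials in the power sums, and these in turn via $\mathfrak S_{s_k}$ through the divided-difference action, one rewrites each product $\mathfrak S_\omega\cdot\mathfrak S_{r[k,m]}$ as a signed combination of $m$-fold iterated Monk products. Iterating Theorem~\ref{mainthm} then yields an expansion
$$
\mathfrak S_\omega \cdot \mathfrak S_{r[k,m]} \;=\; \sum_{(a,b):\,\omega\to\nu} \varepsilon(a,b)\, \mathfrak S_\nu,
$$
where the outer sum is over all saturated $k$-Bruhat chains of length $m$ emanating from $\omega$ and the signs $\varepsilon(a,b)\in\{\pm 1\}$ record the inclusion-exclusion arising from the $h_m$-to-Monk reduction; an analogous expansion holds for $e_m$ in the $c$-case.

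The second and main step is to construct a sign-reversing involution on the set of non-Sottile chains, pairing chains of opposite sign that end at the same permutation $\nu$. For the $r$-case I would scan the chain for the first index $i$ at which $b_i$ coincides with some earlier $b_j$, and swap the adjacent transpositions $(a_i,b_i)$ and $(a_{i+1},b_{i+1})$ by the appropriate braid-type move that preserves the endpoint. The main obstacle, and technical heart of the argument, is to show that this swap is always possible and always reverses sign without disturbing the cover conditions $l(\omega^{(i)}) = l(\omega^{(i-1)}) + 1$. A careful case analysis on the relative positions of $a_i,b_i,a_{i+1},b_{i+1}$ is needed here; the corresponding involution on the $c$-case follows by a symmetric argument exchanging the roles of the $a$'s and $b$'s.

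Finally, after the involution cancels all non-Sottile contributions, only sums over Sottile chains remain. To conclude that $c_{\omega,r[k,m]}^\nu\in\{0,1\}$ one must additionally show that distinct Sottile chains from $\omega$ end at distinct $\nu$. In the tower-diagram framework of this paper, I would expect this uniqueness to emerge from the Pieri algorithm to be developed in the remainder of Section~\ref{sec:pieri}, which should construct a \emph{canonical} Sottile chain directly from the pair $(\mathcal T_\omega,\mathcal T_\nu)$ using the basic tower modifications of Section~\ref{sec:slidetrans}. Thus the combinatorial proof and the algorithmic content of the rest of the section should fit together: the algorithm both witnesses existence and enforces uniqueness.
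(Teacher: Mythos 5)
The statement you are proving is Theorem~5.1, which the paper attributes to Sottile~\cite{F} and quotes \emph{without proof}: it is taken as a known black-box result, and the introduction explicitly disclaims offering any new proof of Pieri's rule. There is therefore no argument in the paper to compare yours against --- the content of Section~\ref{sec:pieri} presupposes Sottile's theorem and then uses the tower-diagram machinery only to select, bijectively, one canonical Sottile chain per target permutation $\nu$.

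As a freestanding sketch, your plan is plausible in outline but leaves the decisive steps unconstructed. The identifications $\mathfrak S_{r[k,m]}=h_m(x_1,\ldots,x_k)$ and $\mathfrak S_{c[k,m]}=e_m(x_1,\ldots,x_k)$ are correct, and ``iterate Monk, then cancel'' is a known strategy. But the involution you describe (swap at the first repeated $b_i$) is asserted, not shown: adjacent $k$-Bruhat steps $t_{a_i,b_i}$ and $t_{a_{i+1},b_{i+1}}$ need not commute, and when they do not, the ``braid-type move'' preserving the endpoint may break the cover condition $l(\omega^{(i)})=l(\omega^{(i-1)})+1$ or fail to give a fixed-point-free sign-reversing pairing among chains with the same $\nu$. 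Moreover, the passage from $h_m$ (resp. $e_m$) to a $\pm1$-signed combination of $m$-fold Monk products is not as clean as stated: Newton-type identities introduce integer coefficients larger than $1$ (or denominators), so the claimed expansion $\mathfrak S_\omega\mathfrak S_{r[k,m]}=\sum_{(a,b)}\varepsilon(a,b)\mathfrak S_\nu$ with $\varepsilon\in\{\pm1\}$ running over all saturated $k$-Bruhat chains requires justification that is not supplied. Finally, uniqueness of the Sottile chain between $\omega$ and a fixed $\nu$ is part of what Sottile's theorem asserts; deferring it to the Pieri algorithm of this section is circular, since that algorithm's correctness proof (Theorem~\ref{thm:pieri} and the surrounding lemmas) itself invokes Sottile's theorem.
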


Note that, when exists, a Sottile r-sequence (or a Sottile c-sequence) between two permutations is not necessarily unique. Thus, the above theorem is not bijective.
Our aim is to relate the above theorem of Sottile to our Monk's algorithm and state a bijective version using tower diagrams. Our main tool will again be Corollary \ref{cor: inc-dec}.

We describe the algorithm for the first part of Sottile's Theorem. The other one is similar. Given permutations $\omega$ and
$r[k,m]$ as in Sottile's Theorem. We want to
evaluate the product $\mathfrak S_\omega \mathfrak S_{r[k,m]}$ of the corresponding Schubert polynomials by applying Monk's rule
successively. The process goes as follows: We start with the tower diagram $\mathcal T_\omega$, and apply
Monk's algorithm from the previous section with $k$. This produces a set $k\cdot \mathcal T_\omega$ of tower diagrams.
Each diagram in $k\cdot \mathcal T_\omega$ is obtained by modifying only two towers
of $\mathcal T_\omega$ and by appending a new cell $e_0$. In each diagram, we label $e_0$ by $(1, a_1, b_1)$
where $(a_1, b_1)$ is the $\omega$-indexes of the modified towers. Notice that the flight number of $e_0$ is $a_1$. Then
for each diagram $\mathcal T'\in k\cdot \mathcal T_\omega$, we apply Monk's algorithm once more with $k$ to obtain a
new set of tower diagrams, denoted by $k_{(2)}\cdot \mathcal T$. In this case,
the new cells are labeled by $(2, a_2, b_2)$ in the same way. We apply this process $m$-times and obtain a set of
tower diagrams $k_{(m)}\cdot \mathcal T_\omega$. Note that each tower diagram in $k_{(m)}\cdot \mathcal T_\omega$
is partially labeled by the sequence $(1, a_1, b_1), (2, a_2, b_2),\ldots, (m, a_m, b_m)$.

In this
case the sequence $(a_1, b_1),\ldots,(a_m, b_m)$ gives rise to a saturated $k$-Bruhat chain. Moreover, it is easy to see that any saturated $k$-Bruhat chain
arises in this way. In particular, all Sottile r-sequences appear as labels of certain tower diagrams in this set.
Notice that, given a tower diagram $\mathcal T\in k_{(m)}\cdot \mathcal T_\omega$ with labels $(1, a_1, b_1),\ldots, (m, a_m, b_m)$, the sequence $( a_1, b_1),
\ldots, (a_m, b_m)$ is a Sottile r-sequence if and only if during the above process, if a tower $\tau$ is raised at some step $i$, then it is not modified at any later
step. Indeed, by Corollary \ref{cor: inc-dec}, the $\omega^{(i)}$-index of $\tau$ must be $b_i$, and at a later step, say $j$, the modified towers have
$\omega^{(j)}$-indexes $a_j$ and $b_j$. Since, $a_j\neq b_i\neq b_j$, the tower $\tau$ is not modified at this step. Thus, by imposing this as a condition in the
above process of successive applications of Monk's algorithm, it is possible to construct a set of tower diagrams partially labelled by Sottile r-sequences.
However, our aim is to introduce some conditions on the above process so that it only produces one copy of these tower diagrams.

For this aim, let $r[k, m]\cdot \mathcal T_\omega$ be the subset of all tower diagrams $\mathcal T$ in $k_{(m)}\cdot \mathcal T_\omega$ with labels
$(1, a_1, b_1), (2, a_2, b_2),\ldots, (m, a_m, b_m)$ such that
\begin{enumerate}
\item $a_1\ge a_2\ge\ldots\ge a_m$ and
\item no two labelled cells are in the same tower of $\mathcal T$.
\end{enumerate}
With this notation, we can state the main result of this section.
\begin{thm}\label{thm:pieri}
Let $\omega$ be a permutation in $S_n$, $k, m\in\Nn$ such that $k+m \le n$. Also let $r[k,m]$ denote the
$(m+1)$-cycle $(k+m\, k+m-1\, \ldots k+1 \, k)$.
Then
\[
\mathfrak S_\omega \mathfrak S_{r[k,m]} = \sum_{\mathcal T\in r[k,m]\cdot \mathcal T_\omega}
 \mathfrak S_{\omega_\mathcal T}.
\]
\end{thm}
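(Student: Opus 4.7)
The strategy is to apply Sottile's Theorem to reduce the identity to a counting statement and then show that conditions (1) and (2) in the definition of $r[k,m]\cdot\mathcal T_\omega$ single out exactly one representative Sottile r-sequence for each endpoint. Concretely, Sottile's Theorem yields
\[
\mathfrak S_\omega \,\mathfrak S_{r[k,m]} \;=\; \sum_{\nu}\mathfrak S_\nu,
\]
where $\nu$ ranges over permutations admitting at least one Sottile r-sequence from $\omega$. It therefore suffices to exhibit a bijection between $r[k,m]\cdot\mathcal T_\omega$ and this set of $\nu$'s via the map $\mathcal T\mapsto \omega_\mathcal T$.

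The first step of the plan is to identify the $m$-fold iteration of Monk's algorithm with the enumeration of all saturated $k$-Bruhat chains of length $m$ starting at $\omega$. By Theorem~\ref{mainthm} and Corollary~\ref{cor: inc-dec}, each labelled diagram $\mathcal T\in k_{(m)}\cdot\mathcal T_\omega$ corresponds bijectively to such a chain $(a_1,b_1),\ldots,(a_m,b_m)$, with endpoint $\omega_\mathcal T=\omega\, t_{a_1,b_1}\cdots t_{a_m,b_m}$, and with the step-$i$ labelled cell sitting in the tower whose $\omega^{(i)}$-index is $b_i$ and whose height was strictly increased at that step. Next I would show that condition~(2) (no two labelled cells lie in the same tower of the final diagram $\mathcal T$) is equivalent to the Sottile r-condition $|\{b_1,\ldots,b_m\}|=m$. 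In the forward direction: if $b_i=b_j$ for some $i<j$, then the tower carrying the step-$i$ label still has $\omega^{(j-1)}$-index $b_j$ (or has transformed into a tower with that index via intermediate modifications), and tracking the five basic transposition passes from Section~\ref{sec:slidetrans} together with the index bookkeeping of Corollary~\ref{cor: inc-dec} forces the step-$j$ label into that same tower. Conversely, when the $b_i$ are all distinct, each tower is modified at most once as a height-raising tower, so the labels remain separated.

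The delicate and decisive step, and the main obstacle, is to show that condition~(1), $a_1\ge a_2\ge\cdots\ge a_m$, selects exactly one Sottile r-sequence for each endpoint $\nu$. My plan is an exchange argument. Suppose $a_i<a_{i+1}$ in some Sottile r-sequence from $\omega$ to $\nu$. Since $b_i\ne b_{i+1}$, the consecutive transpositions $t_{a_i,b_i},t_{a_{i+1},b_{i+1}}$ either commute cleanly (when the four indices are disjoint) or overlap in a controlled way; in each case one produces a rewriting of the pair with the new $a$-values in weakly decreasing order, preserving the Sottile r-property, the saturation of the chain, and the endpoint $\nu$. Iterating this exchange strictly decreases the number of ascents in the $a$-sequence, and hence terminates at a weakly decreasing representative. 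Uniqueness follows because two distinct weakly decreasing representatives could be connected by an elementary exchange, contradicting monotonicity. The hard part will be the complete case analysis of how two consecutive pairs can interact inside a Sottile r-sequence---particularly the cases $b_i=a_{i+1}$, $a_i=a_{i+1}$, or $a_i=b_{i+1}$---and showing that in every such case the required exchange is available and keeps us inside the saturated $k$-Bruhat chains; here Corollary~\ref{cor: inc-dec} together with the broken, zigzag, and shrunken pass rules from Section~\ref{sec:slidetrans} provide exactly the bookkeeping needed. Combining this canonicalization with the bijection of the first step yields the claimed identity.
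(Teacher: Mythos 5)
Your overall route matches the paper's: invoke Sottile's theorem to reduce to a bijection, identify iterated Monk's algorithm with saturated $k$-Bruhat chains via Corollary~\ref{cor: inc-dec}, translate condition~(2) into distinctness of the $b_i$'s, and canonicalize the $a$-sequence by exchanges. The first two steps and the translation of~(2) are right. But there are two issues, one a genuine gap.

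First, a minor over-complication in the exchange step. Because every $a_i \le k$ and every $b_j > k$, the coincidences $b_i = a_{i+1}$ and $a_i = b_{i+1}$ are impossible, and the case $a_i = a_{i+1}$ never calls for a swap (it is already weakly decreasing). So whenever $a_i < a_{i+1}$ with $b_i \ne b_{i+1}$, the four indices $a_i, b_i, a_{i+1}, b_{i+1}$ are automatically pairwise distinct and $t_{a_i,b_i}t_{a_{i+1},b_{i+1}} = t_{a_{i+1},b_{i+1}}t_{a_i,b_i}$ on the nose; no case analysis of overlapping transpositions is required. The paper's corresponding proposition is exactly this bubble-sort argument.

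Second, and more seriously, the uniqueness claim is not established by your argument. You write that \emph{uniqueness follows because two distinct weakly decreasing representatives could be connected by an elementary exchange, contradicting monotonicity}. This is false when the $a$-sequence has repetitions: swapping two adjacent entries with $a_i = a_{i+1}$ preserves weak decrease, so the contradiction does not arise. Repeated $a$-values are in fact the crux of the matter, and the paper resolves them with two tools your proposal does not supply. One is the technical lemma that the flight number $\fn(\mathcal T, c_i)$ of the step-$i$ cell in the \emph{final} diagram $\mathcal T$ still equals $a_i$ (i.e., later applications of Monk's algorithm do not perturb earlier flight numbers); this shows the multiset of $a$-labels is recoverable from the shape of $\mathcal T$, reducing uniqueness to reordering cells that share a flight number. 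The proof of this lemma is a careful case analysis using the five transposition passes of Section~\ref{sec:slidetrans} and is not automatic. The other tool is an analysis of ties: if $a_{i-r} = \cdots = a_i$, the corresponding chain factors as $\omega^{(i-r-1)} t_{a_{i-r},b_{i-r}} t_{a_{i-r},b_{i-r+1}} \cdots t_{a_{i-r},b_i}$, and the relation $t_{\alpha,\beta}t_{\alpha,\gamma} = t_{\alpha,\gamma}t_{\beta,\gamma}$ shows any reordering of this block leaves the realm of $k$-Bruhat chains (one transposition would no longer cross $k$). Without these two pieces, you cannot conclude that $r[k,m]\cdot\mathcal T_\omega$ maps bijectively onto the set of endpoints $\nu$, and the theorem does not follow.
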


The proof of this theorem consists of several steps. The first step is to show that for any permutation $\nu$, if there is a Sottile
r-sequence $\omega\to \nu$, then there is one with specific properties.

\begin{pro}
With the above notation, suppose $\mathcal T$ is a tower diagram in $k_{(m)}\cdot \mathcal T_\omega$ with labels
$(1,a_1, b_1), (2, a_2, b_2),\ldots, (m, a_m, b_m)$ such that $b_i$'s are distinct. Then we can rearrange
the sequence $(a_1, b_1),\ldots, (a_m, b_m)$ as $(a_1', b_1'),\ldots, (a_m', b_m')$ in such a way that
\begin{enumerate}
\item there is a tower diagram $\mathcal T'\in k_{(m)}\cdot \mathcal T_\omega$ with labels $(1, a_1', b_1'), (2, a_2', b_2'),\ldots,(m, a_m', b_m')$,
\item $\omega_\mathcal T = \omega_{\mathcal T'}$ and
\item $a_1\ge a_2\ge \ldots \ge a_m$.
\end{enumerate}
\end{pro}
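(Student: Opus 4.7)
The plan is to prove the proposition by performing a bubble-sort on the sequence of pairs: whenever there is an index $i$ with $a_i < a_{i+1}$, I swap the two consecutive pairs $(a_i, b_i)$ and $(a_{i+1}, b_{i+1})$. After finitely many such swaps the $a$-coordinates are weakly decreasing, and I will verify that each swap preserves both the product $\omega \cdot t_{a_1, b_1}\cdots t_{a_m, b_m}$ and the property of being the label sequence of a tower diagram in $k_{(m)}\cdot \mathcal{T}_\omega$.

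The first step is to observe that such a swap is available whenever $a_i \ne a_{i+1}$. Since $(a, b)$ is a Sottile $r$-sequence the entries $b_i, b_{i+1}$ are distinct, and the inequalities $a_i, a_{i+1} \le k < b_i, b_{i+1}$ rule out both $a_i = b_{i+1}$ and $b_i = a_{i+1}$. Hence whenever $a_i \ne a_{i+1}$ the four entries $\{a_i, b_i, a_{i+1}, b_{i+1}\}$ are pairwise distinct, so the supports of $t_{a_i, b_i}$ and $t_{a_{i+1}, b_{i+1}}$ are disjoint and the two transpositions commute. This immediately ensures that the final product defining $\omega^{(m)}$ is unchanged by the swap, so the endpoint of the chain is preserved.

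The heart of the proof is the following length lemma: if $t_{a, b}$ and $t_{c, d}$ have disjoint supports and $\sigma$ satisfies $l(\sigma t_{a, b}) = l(\sigma) + 1$ together with $l(\sigma t_{a, b} t_{c, d}) = l(\sigma t_{a, b}) + 1$, then $l(\sigma t_{c, d}) = l(\sigma) + 1$. I would prove this via the classical formula
\[
l(\sigma t_{i,j}) - l(\sigma) = \mathrm{sign}(\sigma(j) - \sigma(i)) \cdot (2\mu + 1),
\]
where $\mu$ counts positions $p \in (i, j)$ with $\sigma(p)$ strictly between $\sigma(i)$ and $\sigma(j)$. A case analysis on the relative position of the intervals $[a, b]$ and $[c, d]$ shows that when the two intervals are disjoint or one is strictly nested inside the other, the cover conditions for $\sigma \to \sigma t_{c, d}$ and for $\sigma t_{a, b} \to \sigma t_{a, b} t_{c, d}$ coincide verbatim. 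The only non-trivial case is interlacing, say $a < c < b < d$, where the two cover conditions differ only in whether one requires $\sigma(a) \notin (\sigma(c), \sigma(d))$ or $\sigma(b) \notin (\sigma(c), \sigma(d))$. A short chain-of-inequalities argument, using $\sigma(a) < \sigma(b)$ and $\sigma(c) \notin (\sigma(a), \sigma(b))$ (from the hypothesis on $t_{a, b}$) together with $\sigma(a) \notin (\sigma(c), \sigma(d))$ (from the hypothesis on $t_{c, d}$ after $t_{a, b}$), forces $\sigma(b) \notin (\sigma(c), \sigma(d))$, which is the missing condition.

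Granted the length lemma, a single swap replaces the saturated $k$-Bruhat chain $\omega^{(0)}, \ldots, \omega^{(m)}$ by another saturated $k$-Bruhat chain with the same endpoints: only the intermediate permutation $\omega^{(i)}$ changes, and it is replaced by $\omega^{(i-1)} \cdot t_{a_{i+1}, b_{i+1}}$, which satisfies the required cover relations by the lemma. Bubble-sort terminates in finitely many steps with a sequence $(a_1', b_1'), \ldots, (a_m', b_m')$ obeying $a_1' \ge \cdots \ge a_m'$. Running Monk's algorithm successively through $\mathcal{T}_\omega$ along this reordered sequence then produces, by Theorem \ref{thm:plus1} applied at each step, a labelled tower diagram $\mathcal{T}' \in k_{(m)} \cdot \mathcal{T}_\omega$ with labels $(1, a_1', b_1'), \ldots, (m, a_m', b_m')$ and $\omega_{\mathcal{T}'} = \omega_{\mathcal{T}}$. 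The main obstacle is the interlacing case of the length lemma, which is where the entire combinatorial content of the argument lives; the rest is bookkeeping.
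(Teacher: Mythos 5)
Your proof is correct and takes the same bubble-sort route as the paper: you sort the pairs by adjacent swaps, noting that when $a_i < a_{i+1}$ the four indices $a_i, b_i, a_{i+1}, b_{i+1}$ are pairwise distinct (by $a_j \le k < b_j$ and the distinctness of the $b_j$'s), so $t_{a_i,b_i}$ and $t_{a_{i+1},b_{i+1}}$ commute and the product is unchanged. Where the paper dismisses length-preservation with a bare ``Clearly, this exchange respects the lengths of permutations,'' you isolate it as an explicit length lemma and verify it via the formula $l(\sigma t_{i,j}) - l(\sigma) = \mathrm{sign}(\sigma(j)-\sigma(i))(2\mu+1)$ with a case analysis on the relative position of the supports; this is the same argument with the one nontrivial step spelled out.
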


\begin{proof}
Suppose $a_1\ge a_2\ge \ldots\ge a_{i-1}$ and $a_i>a_{i-1}$. In this case, since the numbers $a_{i-1}, b_{i-1}, a_i, b_i$
are distinct, we have $t_{a_{i-1}, b_{i-1}}t_{a_{i}, b_{i}} = t_{a_{i}, b_{i}}t_{a_{i-1}, b_{i-1}}$. Clearly, this exchange
respects the lengths of permutations. Thus we can interchange the $i$-th and $(i-1)$-st terms of the sequence and have the
first two properties hold. Now we apply the same procedure until we obtain a sequence $a_1'\ge \ldots\ge a_i'$ and
continue with the next term until we get a sequence as in $(3)$. As the first two properties hold at any step, they also
hold at the end.
\end{proof}

With this proposition, we can consider the set $\widetilde{k_{(m)}\cdot \mathcal T_\omega}$ consisting of the tower
diagrams with labels $(1, a_1, b_1), (2, a_2, b_2),\ldots, (m, a_m, b_m)$ such that the number $a_i$ are non-increasing
and the numbers $b_i$ are distinct. Still we may have some repetitions, that is, we may have the same tower
diagram with different labels. The next two lemmas show that this is not the case. The first one is a technical lemma about flight numbers of the new cells, and the
second one proves the uniqueness of the shapes.
\begin{lem}
Let $\mathcal T\in k_{(m)}\cdot \mathcal T_\omega$ with new cells $c_1, \ldots, c_m$ and labels $(1, a_1, b_1),\ldots, (m, a_m, b_m)$. Then
$\fn(\mathcal T, c_i) = a_i$.
\end{lem}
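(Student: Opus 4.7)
The plan is to prove, by induction on $s\in\{i,i+1,\ldots,m\}$, the strengthened statement $\fn(\mathcal T_{\omega^{(s)}},c_i)=a_i$; the lemma is then the case $s=m$. The base case $s=i$ is immediate from Corollary~\ref{cor: inc-dec}(4): applied to the step $\omega^{(i-1)}\to\omega^{(i)}=\omega^{(i-1)}t_{a_i,b_i}$ of Monk's algorithm, it says precisely that the newly adjoined cell $c_i$ has flight number $a_i$ in $\mathcal T_{\omega^{(i)}}$.

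For the inductive step, assume $\fn(\mathcal T_{\omega^{(s-1)}},c_i)=a_i$ for some $s>i$ and analyse the passage $\mathcal T_{\omega^{(s-1)}}\to \mathcal T_{\omega^{(s)}}$ produced by Monk's algorithm applied with $t_{a_s,b_s}$. Corollary~\ref{cor: inc-dec} constrains this passage sharply: only two towers are altered, a raised tower $\tau$ whose $\omega^{(s-1)}$-index $a_s$ becomes $b_s$, and a possibly shrinking tower $\tau'$ whose $\omega^{(s-1)}$-index $b_s$ becomes $a_s$; moreover $\tau$ is to the left of $\tau'$. The main lever is the translation between flight numbers and row indices of the Rothe diagram recorded in Section~\ref{sec:flight}: the flight number of a tower-diagram cell equals the row index of the corresponding Rothe cell. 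I would therefore push the inductive hypothesis to the Rothe-diagram side, so that the Rothe cell associated with $c_i$ lies in row $a_i$ at step $s-1$. Right-multiplication by $t_{a_s,b_s}$ modifies the Rothe diagram in a controlled way -- rows other than $a_s,b_s$ can only gain or lose cells at the two columns $\omega^{(s-1)}(a_s)$ and $\omega^{(s-1)}(b_s)$, while rows $a_s,b_s$ are restructured according to the swapped values -- and a case analysis on whether $a_i\in\{a_s,b_s\}$ and on whether the tower $\sigma_i$ containing $c_i$ is one of $\tau,\tau'$ shows that the Rothe cell representing $c_i$ still lies in row $a_i$ after the step. Translating back via the same correspondence yields $\fn(\mathcal T_{\omega^{(s)}},c_i)=a_i$ and closes the induction.

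The main obstacle is the case in which the tower containing $c_i$ is exactly $\tau'$, so that $c_i$ actually migrates out of $\tau'$ and into $\tau$. Here I would invoke the explicit description of the broken($+$) pass from Section~\ref{sec:slidetrans} together with the remark following Theorem~\ref{thm:plus1} to identify $c_i$ with one of the cells $e_1,\ldots,e_r$ that the foot of the hook carries from the top of $\tau'$ to the top of $\tau$, and then verify directly from the flight-path definition (and the strict-increase of flight numbers in a tower) that the new position of $c_i$ indeed has flight number $a_i$ in $\mathcal T_{\omega^{(s)}}$. Reconciling this combinatorial migration with the Rothe-diagram bookkeeping on row $a_i$ is the technical crux of the inductive step.
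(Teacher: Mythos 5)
Your high-level strategy---prove that $\fn$ is preserved step by step through the repeated applications of Monk's algorithm, using Corollary~\ref{cor: inc-dec} to constrain each step---is the same strategy the paper's proof uses, phrased there as a contradiction argument against a first step at which the flight number changes. But your write-up is a plan, and the plan leaves precisely the load-bearing parts undone, and one of them would not go through as you have organized it.

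Concretely: your case split is ``whether $a_i\in\{a_s,b_s\}$ and on whether the tower $\sigma_i$ containing $c_i$ is one of $\tau,\tau'$.'' This misses the case the paper spends most of its effort on, namely when $\sigma_i$ is \emph{not} a modified tower but the step still threatens the flight path of $c_i$. Since a flight path runs leftward from $c_i$, any change to a tower at or to the left of $\sigma_i$ could in principle perturb it. In particular, when the $j$-th step adds a single new cell $c_j$ (the case with no broken($+$) pass) at a tower strictly to the left of $\sigma_i$, the paper has to rule out the possibility that $c_j$ lands directly under the flight path of $c_i$; this is done by comparing the Schubert paths $\mathcal P^{i}$ and $\mathcal P^{j-1}$ and using the fact that for $s<t$ no cell of $\mathcal P^{t}$ can lie below the cell of $\mathcal P^{s}$ in the same vertical strip. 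Nothing in your plan plays the role of this monotonicity argument. Likewise the paper's third sub-case---where the shrinking tower $\phi$ would have to sit to the \emph{right} of $\sigma_i$---is killed by an argument that the deleted cells would have to traverse $\sigma_i$ on their way to the raised tower, forcing a contradiction with the fact that labelled cells must sit on top of towers; you flag the migration case ($\sigma_i = \phi$) as the ``technical crux'' but do not address this neighbouring sub-case at all.

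The Rothe-diagram translation you propose is a genuine departure from the paper and could, in principle, work, since the flight number does equal the row index in the Rothe diagram. But it does not buy you anything for free: you would have to work out exactly how right-multiplication by $t_{a_s,b_s}$ permutes the Rothe cells, identify which Rothe cell ``is'' $c_i$ before and after, and re-derive something equivalent to the Schubert-path monotonicity in that language. As written, the key inference (``a case analysis \dots\ shows that the Rothe cell representing $c_i$ still lies in row $a_i$'') is asserted, not argued, and the only case you single out for closer inspection is not the hardest one. So the proposal correctly locates the lemma inside the machinery of Section~\ref{sec:slidetrans}, but it does not yet constitute a proof.
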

\begin{proof}
Recall that for each $1\le i\le m$, the label $a_i$ is the flight number of the cell $c_i$ in the tower diagram
$\mathcal T^{(i)} :=\mathcal T_{\omega^{(i)}}$.  We claim that it is still the flight number of the cell $c_i$ in the tower diagram $\mathcal T$. By definition of the
flight number of a cell, it is the $y$-coordinate of the last cell in the flight path of the cell. Thus, this number changes only if there is a change in the flight path.
To prove the lemma, we argue to prove that possible changes in the flight path of a new cell during repeated applications of the Monk's rule does not effect the flight
number.

Suppose for contradiction, that the label of $c_i$ changes at the $j$-th step for some $1\le i<j\le m$ and this is the first occurrence of a change in flight numbers.
Let $\tau$ be the tower containing $c_i$. In this case, the tower $\upsilon$ with $\omega^{(j)}$-index $a_j$ must be on the right of $\tau$ since otherwise it cannot
effect the flight path of $c_i$. There are several cases to consider.

First, the $j$-th step may append the cell $c_j$ to $\tau$ and does not modify any other tower. Then since
the flight path of $c_i$ changes, the cell $c_j$ must lie just under the flight path of $c_i$. Moreover, $c_j$ must be contained in the Schubert path $\mathcal P^{j-1}$
of $k$ in $\mathcal T^{(j-1)}$. In particular, the Schubert path $\mathcal P^{j-1}$ passes under the cell $c_i$. But this is not possible since necessarily the Schubert
path $\mathcal P^{i}$ passes above the cell $c_i$ and it is clear that if $s<t$ then no cell in $\mathcal P^{t}$ can be below the cell in $\mathcal P^{s}$ contained in
the same vertical strip. Thus, this case cannot occur.

Now suppose that in the $j$-th step, the cells $e_0, e_1,\ldots, e_t$ are appended to the tower $\upsilon$ and $t\ge 1$. Also let $\phi$ be the tower with
$\omega^{j-1}$-index $b_j$, so it is the tower which is shortened at this step. Then there are three more cases to consider. The tower $\phi$ can be equal to $\tau$, on the left, or on the right of $\tau$.

In the first case, if $\tau = \phi$, then the cell $c_i$ is moved to the tower $\upsilon$ following its flight path. Thus, its flight path is shorter but remains the same on
the left of the tower $\upsilon$. Hence its flight number does not change.

Secondly, if $\phi$ is on the left of $\tau$, then the flight path of $c_i$ necessarily passes from the deleted cells in this tower. Thus, although the flight path of $c_i$
from $\phi$ to $\upsilon$ is modified, it remains the same on the left of $\upsilon$. Hence the flight number of $c_i$ does not change.

Finally, $\phi$ can be on the right of $\tau$. We claim that this case is not possible. To see this, let $e_1', \ldots, e_t'$ be the cells deleted from $\phi$ at the $j$-th
step. Therefore, each of these cells must be able to fly from $\phi$ to $\upsilon$. But the tower $\tau$ is between these two towers, so these cells should be able to
pass the tower $\tau$, all with a direct pass or all with a zigzag pass. In order the flight number of $c_i$ to change, the flight path of one of the cells $e_1',
\ldots, e_t'$ should pass from the cell $c_i$. But this means that the height of $\tau$ is more than that of $\phi$. Now this means that, if we slide the cells in the
tower $\tau$ from top to bottom till $c_i$, one of them will erase the top cell of $\phi$. But this is not possible since by the construction, these cells must sit on top
of some tower. Therefore this case cannot occur, as required.
\end{proof}

\begin{lem}
In the set $\widetilde{k_{(m)}\cdot \mathcal T_\omega}$, each shape appears only once.
\end{lem}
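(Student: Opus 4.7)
The strategy is an induction on $m$. For the base case $m=1$, we have $\widetilde{k_{(1)} \cdot \mathcal T_\omega} = k \cdot \mathcal T_\omega$; by Corollary \ref{cor: inc-dec}, a shape in $k \cdot \mathcal T_\omega$ determines $(a_1, b_1)$ uniquely as the $\omega$-indexes of the two modified towers, so uniqueness is immediate.

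For the inductive step, suppose $(\mathcal T, \mathcal L)$ and $(\mathcal T, \mathcal L')$ are two elements of $\widetilde{k_{(m)} \cdot \mathcal T_\omega}$ sharing the underlying shape $\mathcal T$. Write their labels as $(i, a_i, b_i)$ and $(i, a_i', b_i')$ and the associated new cells as $c_i$ and $c_i'$. The plan is to show that $c_m = c_m'$ and $(a_m, b_m) = (a_m', b_m')$: once the final step agrees, removing the cell $c_m$ and reversing any transport performed at step $m$ (using the broken$(+)$-pass description of Theorem \ref{thm:plus1} and Corollary \ref{cor: inc-dec}) produces a common shape $\mathcal T^{(m-1)}$; both restricted labellings lie in $\widetilde{k_{(m-1)} \cdot \mathcal T_\omega}$, and the inductive hypothesis finishes the argument.

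To identify the last cell, I would invoke the preceding lemma, which gives $\fn(\mathcal T, c_m) = a_m$ and $\fn(\mathcal T, c_m') = a_m'$; by the non-increasing condition these are the minima of the respective flight-number sequences. Corollary \ref{cor: inc-dec} then forces the $\omega_\mathcal T$-index of the tower containing $c_m$ to equal $b_m$, and the distinctness of the $b_i$'s guarantees that this tower was not touched after step $m$, so the reverse operation at $c_m$ is unambiguous. Among all cells of $\mathcal T$ of minimal flight number for which such a reversal is well-defined and yields a labelling still satisfying the non-increasing and distinctness conditions, I would show that only one choice is valid, by separately analyzing the two possible configurations of the final step of the algorithm given by Theorem \ref{thm:plus1} (pure addition of a single cell, versus addition of $r+1$ cells to one tower with $r$ cells erased from a tower to the right).

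The main obstacle is this last verification: one must argue that the non-increasing condition on the $a_i$ together with the distinctness of the $b_i$ together force the reversibility property to pick out a unique cell, ruling out the possibility that two genuinely distinct cells of $\mathcal T$ could each serve as the ``last'' labelled cell of a valid labelling. The delicate case arises when several cells in $\mathcal T$ have the same minimal flight number; here I expect to use that only one of them lies atop a tower whose $\omega_\mathcal T$-index $b$ satisfies $a \le k < b$ together with the saturation condition $l(\omega_\mathcal T \cdot t_{a,b}) = l(\omega_\mathcal T) - 1$, via a case analysis parallel to the proof of Lemma \ref{lem:funda}.
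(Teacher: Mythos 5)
Your overall reduction is sound as a skeleton: if you could show that the two labellings of the same shape must agree in their final step (same cell $c_m=c_m'$ and same pair $(a_m,b_m)=(a_m',b_m')$), then undoing that step and invoking induction on $m$ would finish. But the lemma's entire difficulty is concentrated in exactly the step you defer. When the minimal flight number $a_m$ is attained by a single labelled cell, the generalized flight number already determines the triple and there is nothing to prove; the delicate case is when several labelled cells share that flight number, and there you only state a plan (``I would show\dots'', ``I expect to use\dots'') rather than an argument. Note also that the induction gives you no leverage here: if the two labellings ended with genuinely different transpositions, their predecessors would simply be different shapes, each consistent with the inductive hypothesis, so no contradiction arises unless the final step is forced directly. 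Moreover, your proposed selection criterion is shaky as stated: the last labelled cell $e_0$ need not lie atop its tower (the final Monk step may transport $r\ge 1$ cells on top of it), and it is not established -- nor obvious -- that only one cell of minimal flight number admits a length-decreasing $t_{a,b}$ with $a\le k<b$ compatible with the remaining labels. You also implicitly assume that the set of labelled cells, hence the minimal flight number, is the same for both labellings of the shape, which itself needs the flight-number lemma applied on both sides before the two ``minima'' can be compared.

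For comparison, the paper argues directly rather than by induction: by the preceding lemma $\fn(\mathcal T,c_i)=a_i$, the multiset of first labels is read off from the shape, and any label whose $a$-value occurs once is completely determined by the generalized flight number together with the non-increasing ordering; the only possible ambiguity is the ordering of cells within a block of equal $a$-values. That block corresponds to a product $t_{a,b_{i-r}}t_{a,b_{i-r+1}}\cdots t_{a,b_i}$ of transpositions sharing the smaller index $a\le k$, and the only identity available to reorder such a product, $t_{\alpha,\beta}t_{\alpha,\gamma}=t_{\alpha,\gamma}t_{\beta,\gamma}$, introduces a transposition $t_{\beta,\gamma}$ with $\beta,\gamma>k$, destroying the $k$-Bruhat condition; hence the left-to-right order is the unique admissible one. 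Some version of this commutation analysis (or an equivalent combinatorial substitute) is what your sketch is missing, and without it the proposal does not yet prove the statement.
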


\begin{proof}
Let $\mathcal T$ be a tower diagram in $\widetilde{k_{(m)}\cdot \mathcal T_\omega}$ with labels
$(1, a_1, b_1), (2, a_2, b_2),\ldots, (m, a_m, b_m)$. By the previous lemma, $\fn(\mathcal T, c_i) = a_i$, and since
the flight number of a cell only depends on the shape of the tower diagram, the multiset of labels $a_i$ are uniquely determined, up to reordering of
the repeated terms. Also, if a label $a$ appears only once, then it uniquely determines the corresponding triple
$(i,a,b_i)$. Indeed, the labels $a$ and $b_i$ are determined by the generalized flight number of the cell and $i$ is
the order that the cell appears. Thus, the only way to obtain a different label is to reorder the cells with equal flight
number. We claim that, even in this case, there is only one choice, that is, to order such cells from left to right.

To prove this, suppose $i$ is the index that the last repetition occurs, so that $a_{i-r-1} > a_{i-r} = \ldots = a_{i}> a_{i+1}$
for some $r$. Without loss of generality, we can assume that $i=m$ (so $a_i$ is the largest index). First assume
that the towers with label $a_i, \ldots, a_{i-r}$ are ordered as above, from right to left. Then it is easy to see that the
tower diagram $\mathcal T$ is the diagram of $$\omega^{i-r-1}t_{a_{i-r},b_{i-r}}t_{a_{i-r}, b_{i-r+1}}\ldots t_{a_{i-r},b_i}.$$
Notice that $a_{i-r}$ is common for each of the transpositions.

Now it is clear that there is no other way to obtain the same labelled diagram because the equality
$$t_{\alpha,\beta}t_{\alpha, \gamma} = t_{\alpha, \gamma}t_{\beta,\gamma}$$
is the only way to reorder the above sequence of transpositions and in this case the resulting chain is not $k$-Bruhat.

\end{proof}

With this result, we see that the set $\widetilde{k_{(m)}\cdot \mathcal T_\omega}$ is in bijection with the set of
permutations that appear in the product $\mathfrak S_\omega \mathfrak S_{r[k,m]}$. Notice that labels of a given
diagram in $\widetilde{k_{(m)}\cdot \mathcal T_\omega}$ has the property that the flight numbers are non-increasing.
Next we show that we can change the condition on the last components of the labels.

\begin{lem}
Let $\mathcal T\in k_{(m)}\cdot \mathcal T_\omega$ with labels $(1, a_1, b_1), (2, a_2, b_2),\ldots,
(m, a_m, b_m)$ that satisfies $a_1\ge \ldots \ge a_m$. Then $\mathcal T\in \widetilde{k_{(m)}\cdot \mathcal T_\omega}$
if and only if no two labelled cells in $\mathcal T$ are on the same tower.
\end{lem}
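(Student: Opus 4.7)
The plan is to track, step by step through the chain of intermediate tower diagrams $\mathcal T_\omega = \mathcal T^{(0)}, \mathcal T^{(1)}, \ldots, \mathcal T^{(m)} = \mathcal T$, how the $\omega^{(l)}$-index of each tower $\tau_i$ (the column where $c_i$ is initially placed at step $i$) evolves. I will rely on two structural facts. First, by Corollary~\ref{cor: inc-dec}, the transposition $t_{a_l, b_l}$ acts on $\omega^{(l-1)}$-indices only by swapping the values $a_l$ and $b_l$, so a tower's index changes at step $l$ only when its current value is either $a_l$ or $b_l$. Second, by Remark~\ref{rem1}(2), the flight numbers of the filled cells in any tower are strictly less than that tower's $\omega$-index, and by the flight-number lemma proved just above, $\fn(\mathcal T^{(l)}, c_i) = a_i$ for every $l \ge i$.

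For the forward implication, I would assume the $b_i$'s are distinct and prove by induction on $l$ that $\tau_i$ keeps $\omega^{(l)}$-index $b_i$ for every $l \ge i$. Indeed the index can only change at step $l$ if $b_i = a_l$ (impossible since $b_i > k \ge a_l$) or $b_i = b_l$ (excluded by distinctness). Hence $\tau_i$ is never shortened (the shortened tower has index $b_l \ne b_i$) nor raised (the raised tower has index $a_l \le k < b_i$) after step $i$, so $c_i$ stays put, and the columns $\tau_1, \ldots, \tau_m$ are pairwise distinct since $b_1, \ldots, b_m$ are their pairwise distinct $\omega_\mathcal T$-indices.

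For the reverse implication I would argue the contrapositive: assume $b_i = b_j$ for some $i < j$, with $j$ minimal for this $i$. By the forward argument applied to the (still distinct) values $b_{i+1}, \ldots, b_{j-1}$, the index of $\tau_i$ remains $b_i$ through step $j-1$, hence $\sigma_j = \tau_i$ at step $j$. I would then invoke the flight-number invariance: if $c_i$ stayed in $\tau_i$ after step $j$, its flight number $a_i$ would have to be strictly smaller than the new $\omega^{(j)}$-index $a_j$ of $\tau_i$, contradicting the hypothesis $a_i \ge a_j$. Therefore $c_i$ must lie among the cells that are moved from $\sigma_j = \tau_i$ to $\tau_j$ at step $j$; but $c_j$ itself is placed in $\tau_j$ at the same step, so $c_i$ and $c_j$ occupy the same column immediately after step $j$.

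To finish, I would show $c_i$ and $c_j$ remain in the same column up to step $m$. If no $l > j$ satisfies $b_l = b_j$, the forward argument applied to $\tau_j$ shows it is not modified after step $j$. Otherwise, for the minimal such $l$, the tower $\tau_j$ plays the role of $\sigma_l$, and the same flight-number obstruction (applied using $a_i, a_j \ge a_l$) forces both $c_i$ and $c_j$ into the top stack that moves to $\tau_l$; since $c_i$ sits above $c_j$ within this stack, they are transported together. Iterating yields $c_i$ and $c_j$ in a common tower of $\mathcal T$, contradicting the hypothesis. The borderline case $a_i = a_j$ is absorbed automatically, since two cells of equal flight number cannot coexist in a single tower. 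The main obstacle will be making the ``both cells move together'' step fully rigorous: it requires keeping track of the vertical positions of $c_i$ and $c_j$ inside $\tau_j$ and confirming that the flight-number invariance continues to apply after each relocation.
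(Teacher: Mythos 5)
Your forward direction matches the paper exactly. For the reverse direction you take a genuinely different (though closely related) route. The paper fixes $j<i$ with $b_j=b_i$ and uses the hypothesis ``no two labelled cells share a tower'' to conclude that the cell $c_j$, already sitting in the tower $\tau$ that is shortened at step $i$, must \emph{remain} there; the strict monotonicity of flight numbers from bottom to top in a tower then gives $a_j<a_i$, directly contradicting $a_j\ge a_i$. You instead use the ordering hypothesis first: the same flight-number inequality shows $c_i$ (your earlier cell) \emph{cannot} stay in $\sigma_j$, so it must be carried to $\tau_j$, after which $c_i$ and $c_j$ sit in a common tower, and you then have to track them to the final diagram $\mathcal T$ to contradict the hypothesis there. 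Both versions rest on the same two ingredients -- preservation of the flight number $a_i$ of each new cell (the lemma just before this one) and the strict bottom-to-top increase of flight numbers within a tower -- but the logical roles of the two hypotheses are reversed. The paper's version is shorter because the hypothesis is invoked at the intermediate stage, while yours requires the ``both cells are transported together'' step; your proposal correctly identifies this as the delicate point and your sketch of it (they move together because $c_i$ sits above $c_j$ and it is always the top block of a shortened tower that is carried over) is sound and in fact supplies the detail the paper leaves implicit. Two small cautions: the flight-number lemma as stated concerns the final $\mathcal T$ only, so your use of $\fn(\mathcal T^{(l)},c_i)=a_i$ for intermediate $l\ge i$ relies on the fact established \emph{inside} that lemma's proof rather than on its statement; and the monotonicity of flight numbers within a tower comes from the remark at the end of Section \ref{sec:flight}, not from Remark \ref{rem1}.
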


\begin{proof}
Suppose $\mathcal T\in \widetilde{k_{(m)}\cdot \mathcal T_\omega}$. Then by definition, the numbers $b_i$ are distinct.
Thus by Corollary \ref{cor: inc-dec}, a tower whose height increases at some step will not decrease again. In particular,
the labelled cells do not move during the construction. Hence no two labelled cells can lie on the same tower.

Conversely, suppose $\mathcal T\in k_{(m)}\cdot \mathcal T_\omega$ with labels $(1, a_1, b_1), (2, a_2, b_2),
\ldots, (m, a_m, b_m)$ such that $a_1\ge \ldots \ge a_m$ and that no tower contains more than one labelled cell. We
have to prove that the numbers $b_i$ are distinct. By Corollary \ref{cor: inc-dec}, it is sufficient to prove that a raised
tower is not modified at a later step. Suppose for contradiction, that this is the case, and $b_i = b_j$ for some indexes $j<i$.
In this case, at the $i$-th step, the towers with $\omega^{(i-1)}$-indexes $a_i$ and $b_i$ are modified. Note that the tower with
$\omega^{(i-1)}$-index $a_i$ is to be raised and the $\omega^{(i)}$-index of this tower is $b_i$. Thus at this step, the
other modified tower $\tau$ has $\omega^{(i-1)}$-index $b_i$ and by our assumption, it must be the raised tower at the $j$-th step.

Notice that the tower $\tau$ has a cell labeled with $(1, a_j, b_j)$. Moreover, the $\omega^{(i)}$-index of $\tau$ is $a_i$
and since no tower contains two labelled cells, the tower $\tau$ still contains the cell with the above label. Now the bottom empty cell over
$\tau$ has flight number $a_i$. Thus we must have $a_i > a_j$. But by our assumption, we must have $a_j\ge a_i$.
\end{proof}

With this lemma, we can identify the set $\widetilde{k_{(m)}\cdot \mathcal T_\omega}$ with the set
$r[k,m]\cdot \mathcal T_\omega$ and complete the proof of Theorem \ref{thm:pieri}.

We summarize the process of obtaining the set $r[k,m]\cdot \mathcal T_\omega$ as an algorithm. Given a permutation $\omega$ with the tower diagram $\Tt$ and
a row permutation $\fr_{k,m}= r[k,m] = s_{k+m}s_{k+m-1} \cdots s_k$.

\smallskip
\noindent\textsc{Pieri's Row Algorithm}

\smallskip
\noindent\textsc{Step 0}: Set $\alpha = m-1$.

\smallskip
\noindent\textsc{STEP 1}: Apply Monk's Algorithm to the pair $(\Tt, k)$ to construct the set $k\cdot \Tt = \{ \Tt^1, \Tt^2,
\ldots, \Tt^m\}$. For each $j$, label the new cell $c_j$ in the diagram $\Tt^j$ by $(1, a_1, b_1)$ where $(a_1, b_1)$ is the
pair of $\omega$-indexes of the modified towers in $\mathcal T$.

\smallskip
\noindent\textsc{STEP 2}: Apply Monk's Algorithm to each pair $(\Tt^j, k)$ for $\Tt^j\in k\cdot \Tt$ to construct the union
set $k_{(m-\alpha+1)}\cdot \mathcal T =\bigcup\{ k\cdot \Tt^j | 1\le j\le m \}$, and label the new cells as in Step 1.

\smallskip
\noindent\textsc{STEP 3}: Construct the set $r[k, m-\alpha +1]\cdot \Tt$ by appending a diagram $\mathcal U$ in $k_{(m-\alpha+1)}\cdot\mathcal T$ if and only if
\begin{enumerate}
\item no two labelled cells in $\mathcal U$ are not in the same tower of $\mathcal U$ and
\item the labels of $\mathcal U$ satisfies $a_1\ge a_2\ge \ldots\ge a_{(m-\alpha+1)}$.
\end{enumerate}

\smallskip
\noindent\textsc{STEP 4}: Set $\alpha = \alpha - 1$. If $\alpha > 0$, set $k\cdot \Tt = r[k, m-\alpha]\cdot \Tt$ and apply
Step 2 with the set $k\cdot \Tt$.

\begin{ex} We illustrate the above algorithm with an example. Let $\omega = s_5 s_6 s_4 s_3 s_4\in S_7$. Then the tower
diagram of $\omega$ with the $\omega$-indexes is given as follows.
\begin{figure}[h]\setlength{\unitlength}{0.5mm}
	\begin{center}
		\begin{picture}(100,90)
		\multiput(0,0)(0,0){1}{\line(1,0){80}}
		\multiput(0,0)(0,0){1}{\line(0,1){60}}
		\multiput(0,10)(2,0){40}{\line(0,1){.1}}
		\multiput(0,20)(2,0){40}{\line(0,1){.1}}
		\multiput(0,30)(2,0){40}{\line(0,1){.1}}
		\multiput(0,40)(2,0){40}{\line(0,1){.1}}
		\multiput(0,50)(2,0){40}{\line(0,1){.1}}
		\multiput(0,60)(2,0){40}{\line(0,1){.1}}
		\multiput(10,0)(0,2){30}{\line(1,0){.1}}
		\multiput(20,0)(0,2){30}{\line(1,0){.1}}
		\multiput(30,0)(0,2){30}{\line(1,0){.1}}
		\multiput(40,0)(0,2){30}{\line(1,0){.1}}
		\multiput(50,0)(0,2){30}{\line(1,0){.1}}
		\multiput(60,0)(0,2){30}{\line(1,0){.1}}
		\multiput(70,0)(0,2){30}{\line(1,0){.1}}
		\multiput(80,0)(0,2){30}{\line(1,0){.1}}
		\put(20,10){\tableaux{{}\\{}}}
		\put(30,00){\tableaux{{}}}
		\put(40,10){\tableaux{{}\\{}}}
		\put(3,-11){$1$}\put(13,-11){$2$}\put(23,-11){$5$}\put(33,-11){$4$}\put(43,-11){$7$}\put(53,-11){$3$}\put(63,-11){$6$}
		\end{picture}
	\end{center}
	\caption{}
\end{figure}

We evaluate the product $\mathfrak S_\omega \mathfrak S_{r[3,2]}$ using the above algorithm. We skip details. First by applying Monk's Algorithm, we
obtain the three diagrams shown in Figure \ref{monk1}. We denote these diagrams by $\mathcal T^1, \mathcal T^2, \mathcal T^3$ form left to right. To fit in the cells,
we write the label $(1, a_1, b_1)$ as $1^{a_1}_{b_1}$

\begin{figure}[h]	\setlength{\unitlength}{0.5mm}
	\begin{center}
		\begin{picture}(80,60)
		\multiput(0,0)(0,0){1}{\line(1,0){80}}
		\multiput(0,0)(0,0){1}{\line(0,1){60}}
		\multiput(0,10)(2,0){40}{\line(0,1){.1}}
		\multiput(0,20)(2,0){40}{\line(0,1){.1}}
		\multiput(0,30)(2,0){40}{\line(0,1){.1}}
		\multiput(0,40)(2,0){40}{\line(0,1){.1}}
		\multiput(0,50)(2,0){40}{\line(0,1){.1}}
		\multiput(0,60)(2,0){40}{\line(0,1){.1}}
		\multiput(10,0)(0,2){30}{\line(1,0){.1}}
		\multiput(20,0)(0,2){30}{\line(1,0){.1}}
		\multiput(30,0)(0,2){30}{\line(1,0){.1}}
		\multiput(40,0)(0,2){30}{\line(1,0){.1}}
		\multiput(50,0)(0,2){30}{\line(1,0){.1}}
		\multiput(60,0)(0,2){30}{\line(1,0){.1}}
		\multiput(70,0)(0,2){30}{\line(1,0){.1}}
		\multiput(80,0)(0,2){30}{\line(1,0){.1}}
		\put(10,20){\tableaux{{}\\{}\\{1^2_5}}}
		\put(30,00){\tableaux{{}}}
		\put(40,10){\tableaux{{}\\{}}}
		\put(3,-11){$1$}\put(13,-11){$5$}\put(23,-11){$2$}\put(33,-11){$4$}\put(43,-11){$7$}\put(53,-11){$3$}\put(63,-11){$6$}
		\end{picture}\hspace{.2in}
			\begin{picture}(80,60)
			\multiput(0,0)(0,0){1}{\line(1,0){80}}
			\multiput(0,0)(0,0){1}{\line(0,1){60}}
			\multiput(0,10)(2,0){40}{\line(0,1){.1}}
			\multiput(0,20)(2,0){40}{\line(0,1){.1}}
			\multiput(0,30)(2,0){40}{\line(0,1){.1}}
			\multiput(0,40)(2,0){40}{\line(0,1){.1}}
			\multiput(0,50)(2,0){40}{\line(0,1){.1}}
			\multiput(0,60)(2,0){40}{\line(0,1){.1}}
			\multiput(10,0)(0,2){30}{\line(1,0){.1}}
			\multiput(20,0)(0,2){30}{\line(1,0){.1}}
			\multiput(30,0)(0,2){30}{\line(1,0){.1}}
			\multiput(40,0)(0,2){30}{\line(1,0){.1}}
			\multiput(50,0)(0,2){30}{\line(1,0){.1}}
			\multiput(60,0)(0,2){30}{\line(1,0){.1}}
			\multiput(70,0)(0,2){30}{\line(1,0){.1}}
			\multiput(80,0)(0,2){30}{\line(1,0){.1}}
			\put(10,10){\tableaux{{}\\{1^2_4}}}
			\put(20,10){\tableaux{{}\\{}}}
			\put(40,10){\tableaux{{}\\{}}}
			\put(3,-11){$1$}\put(13,-11){$4$}\put(23,-11){$5$}\put(33,-11){$2$}\put(43,-11){$7$}\put(53,-11){$3$}\put(63,-11){$6$}
			\end{picture}\hspace{.2in}
		\begin{picture}(80,60)
		\multiput(0,0)(0,0){1}{\line(1,0){80}}
		\multiput(0,0)(0,0){1}{\line(0,1){60}}
		\multiput(0,10)(2,0){40}{\line(0,1){.1}}
		\multiput(0,20)(2,0){40}{\line(0,1){.1}}
		\multiput(0,30)(2,0){40}{\line(0,1){.1}}
		\multiput(0,40)(2,0){40}{\line(0,1){.1}}
		\multiput(0,50)(2,0){40}{\line(0,1){.1}}
		\multiput(0,60)(2,0){40}{\line(0,1){.1}}
		\multiput(10,0)(0,2){30}{\line(1,0){.1}}
		\multiput(20,0)(0,2){30}{\line(1,0){.1}}
		\multiput(30,0)(0,2){30}{\line(1,0){.1}}
		\multiput(40,0)(0,2){30}{\line(1,0){.1}}
		\multiput(50,0)(0,2){30}{\line(1,0){.1}}
		\multiput(60,0)(0,2){30}{\line(1,0){.1}}
		\multiput(70,0)(0,2){30}{\line(1,0){.1}}
		\multiput(80,0)(0,2){30}{\line(1,0){.1}}
		\put(20,10){\tableaux{{}\\{}}}
		\put(30,00){\tableaux{{}}}
		\put(40,10){\tableaux{{}\\{}}}
		\put(50,0){\tableaux{{1^3_6}}}
		\put(3,-11){$1$}\put(13,-11){$2$}\put(23,-11){$5$}\put(33,-11){$4$}\put(43,-11){$7$}\put(53,-11){$6$}\put(63,-11){$3$}
		\end{picture}
	\end{center}
	\caption{}\label{monk1}
\end{figure}

We apply Monk's algorithm to these tower diagrams an obtain the following list of diagrams. Each of the diagrams is denoted using the above naming, so the first
tower diagram from left in Figure \ref{monk11}, is denoted by $\mathcal T^{1,1}$. Now we see that among these 9 tower diagrams, only two of them,
$\mathcal T^{1,1}$ and $\mathcal T^{2,1}$ do not induce a Sottile r-sequence, and each contains a tower with two labelled cells. With respect to their shapes, the
remaining tower diagrams are partitioned as $\{\mathcal T^{1,2}\}, \{\mathcal T^{1,3}, \mathcal T^{3,1}\}, \{\mathcal T^{2,2}\}, \{\mathcal T^{2,3}, \mathcal T^{3,2}\},
\{\mathcal T^{3,3}\}$. Now it is clear that we can choose one tower diagram from each part so that we have $a_1\ge a_2$. Thus, putting
$\omega^{i,j} =\omega_{\mathcal T^{i,j}}$, we get
\[
\mathfrak S_\omega \mathfrak S_{r[3,2]} = \mathfrak S_{\omega^{1,2}} + \mathfrak S_{\omega^{3,1}} + \mathfrak S_{\omega^{2,2}}+ \mathfrak S_{\omega^{3,2}}
+ \mathfrak S_{\omega^{3,3}}.
\]

\begin{figure}[h]\setlength{\unitlength}{0.5mm}
	\begin{center}
		\begin{picture}(80,60)
		\multiput(0,0)(0,0){1}{\line(1,0){80}}
		\multiput(0,0)(0,0){1}{\line(0,1){60}}
		\multiput(0,10)(2,0){40}{\line(0,1){.1}}
		\multiput(0,20)(2,0){40}{\line(0,1){.1}}
		\multiput(0,30)(2,0){40}{\line(0,1){.1}}
		\multiput(0,40)(2,0){40}{\line(0,1){.1}}
		\multiput(0,50)(2,0){40}{\line(0,1){.1}}
		\multiput(0,60)(2,0){40}{\line(0,1){.1}}
		\multiput(10,0)(0,2){30}{\line(1,0){.1}}
		\multiput(20,0)(0,2){30}{\line(1,0){.1}}
		\multiput(30,0)(0,2){30}{\line(1,0){.1}}
		\multiput(40,0)(0,2){30}{\line(1,0){.1}}
		\multiput(50,0)(0,2){30}{\line(1,0){.1}}
		\multiput(60,0)(0,2){30}{\line(1,0){.1}}
		\multiput(70,0)(0,2){30}{\line(1,0){.1}}
		\multiput(80,0)(0,2){30}{\line(1,0){.1}}
		\put(0,30){\tableaux{{}\\{}\\{1^1_5}\\{2^1_5}}}
		\put(30,00){\tableaux{{}}}
		\put(40,10){\tableaux{{}\\{}}}
		\put(3,-11){$5$}\put(13,-11){$1$}\put(23,-11){$2$}\put(33,-11){$4$}\put(43,-11){$7$}\put(53,-11){$3$}\put(63,-11){$6$}
		\end{picture}\hspace{.2in}
		\begin{picture}(80,60)
		\multiput(0,0)(0,0){1}{\line(1,0){80}}
		\multiput(0,0)(0,0){1}{\line(0,1){60}}
		\multiput(0,10)(2,0){40}{\line(0,1){.1}}
		\multiput(0,20)(2,0){40}{\line(0,1){.1}}
		\multiput(0,30)(2,0){40}{\line(0,1){.1}}
		\multiput(0,40)(2,0){40}{\line(0,1){.1}}
		\multiput(0,50)(2,0){40}{\line(0,1){.1}}
		\multiput(0,60)(2,0){40}{\line(0,1){.1}}
		\multiput(10,0)(0,2){30}{\line(1,0){.1}}
		\multiput(20,0)(0,2){30}{\line(1,0){.1}}
		\multiput(30,0)(0,2){30}{\line(1,0){.1}}
		\multiput(40,0)(0,2){30}{\line(1,0){.1}}
		\multiput(50,0)(0,2){30}{\line(1,0){.1}}
		\multiput(60,0)(0,2){30}{\line(1,0){.1}}
		\multiput(70,0)(0,2){30}{\line(1,0){.1}}
		\multiput(80,0)(0,2){30}{\line(1,0){.1}}
		\put(10,20){\tableaux{{}\\{}\\{1^2_5}}}
		\put(20,10){\tableaux{{}\\{2^2_4}}}
		\put(40,10){\tableaux{{}\\{}}}
		\put(3,-11){$1$}\put(13,-11){$5$}\put(23,-11){$4$}\put(33,-11){$2$}\put(43,-11){$7$}\put(53,-11){$3$}\put(63,-11){$6$}
		\end{picture}\hspace{.2in}
		\begin{picture}(80,60)
		\multiput(0,0)(0,0){1}{\line(1,0){80}}
		\multiput(0,0)(0,0){1}{\line(0,1){60}}
		\multiput(0,10)(2,0){40}{\line(0,1){.1}}
		\multiput(0,20)(2,0){40}{\line(0,1){.1}}
		\multiput(0,30)(2,0){40}{\line(0,1){.1}}
		\multiput(0,40)(2,0){40}{\line(0,1){.1}}
		\multiput(0,50)(2,0){40}{\line(0,1){.1}}
		\multiput(0,60)(2,0){40}{\line(0,1){.1}}
		\multiput(10,0)(0,2){30}{\line(1,0){.1}}
		\multiput(20,0)(0,2){30}{\line(1,0){.1}}
		\multiput(30,0)(0,2){30}{\line(1,0){.1}}
		\multiput(40,0)(0,2){30}{\line(1,0){.1}}
		\multiput(50,0)(0,2){30}{\line(1,0){.1}}
		\multiput(60,0)(0,2){30}{\line(1,0){.1}}
		\multiput(70,0)(0,2){30}{\line(1,0){.1}}
		\multiput(80,0)(0,2){30}{\line(1,0){.1}}
		\put(10,20){\tableaux{{}\\{}\\{1^2_5}}}
		\put(30,00){\tableaux{{}}}
		\put(40,10){\tableaux{{}\\{}}}
		\put(50,0){\tableaux{{2^3_6}}}
		\put(3,-11){$1$}\put(13,-11){$5$}\put(23,-11){$2$}\put(33,-11){$4$}\put(43,-11){$7$}\put(53,-11){$6$}\put(63,-11){$3$}
		\end{picture}
	\end{center}
	\caption{}\label{monk11}
\end{figure}

\begin{figure}[h]\setlength{\unitlength}{0.5mm}
	\begin{center}
		\begin{picture}(80,60)
		\multiput(0,0)(0,0){1}{\line(1,0){80}}
		\multiput(0,0)(0,0){1}{\line(0,1){60}}
		\multiput(0,10)(2,0){40}{\line(0,1){.1}}
		\multiput(0,20)(2,0){40}{\line(0,1){.1}}
		\multiput(0,30)(2,0){40}{\line(0,1){.1}}
		\multiput(0,40)(2,0){40}{\line(0,1){.1}}
		\multiput(0,50)(2,0){40}{\line(0,1){.1}}
		\multiput(0,60)(2,0){40}{\line(0,1){.1}}
		\multiput(10,0)(0,2){30}{\line(1,0){.1}}
		\multiput(20,0)(0,2){30}{\line(1,0){.1}}
		\multiput(30,0)(0,2){30}{\line(1,0){.1}}
		\multiput(40,0)(0,2){30}{\line(1,0){.1}}
		\multiput(50,0)(0,2){30}{\line(1,0){.1}}
		\multiput(60,0)(0,2){30}{\line(1,0){.1}}
		\multiput(70,0)(0,2){30}{\line(1,0){.1}}
		\multiput(80,0)(0,2){30}{\line(1,0){.1}}
		\put(0,20){\tableaux{{}\\{1^2_4}\\{2^1_4}}}
		\put(20,10){\tableaux{{}\\{}}}
		\put(40,10){\tableaux{{}\\{}}}
		\put(3,-11){$4$}\put(13,-11){$1$}\put(23,-11){$5$}\put(33,-11){$2$}\put(43,-11){$7$}\put(53,-11){$3$}\put(63,-11){$6$}
		\end{picture}\hspace{.2in}
		\begin{picture}(80,60)
		\multiput(0,0)(0,0){1}{\line(1,0){80}}
		\multiput(0,0)(0,0){1}{\line(0,1){60}}
		\multiput(0,10)(2,0){40}{\line(0,1){.1}}
		\multiput(0,20)(2,0){40}{\line(0,1){.1}}
		\multiput(0,30)(2,0){40}{\line(0,1){.1}}
		\multiput(0,40)(2,0){40}{\line(0,1){.1}}
		\multiput(0,50)(2,0){40}{\line(0,1){.1}}
		\multiput(0,60)(2,0){40}{\line(0,1){.1}}
		\multiput(10,0)(0,2){30}{\line(1,0){.1}}
		\multiput(20,0)(0,2){30}{\line(1,0){.1}}
		\multiput(30,0)(0,2){30}{\line(1,0){.1}}
		\multiput(40,0)(0,2){30}{\line(1,0){.1}}
		\multiput(50,0)(0,2){30}{\line(1,0){.1}}
		\multiput(60,0)(0,2){30}{\line(1,0){.1}}
		\multiput(70,0)(0,2){30}{\line(1,0){.1}}
		\multiput(80,0)(0,2){30}{\line(1,0){.1}}
		\put(10,10){\tableaux{{}\\{1^2_4}}}
		\put(20,10){\tableaux{{}\\{}}}
		\put(30,20){\tableaux{{}\\{}\\{2^2_7}}}
		\put(3,-11){$1$}\put(13,-11){$4$}\put(23,-11){$5$}\put(33,-11){$7$}\put(43,-11){$2$}\put(53,-11){$3$}\put(63,-11){$6$}
		\end{picture}\hspace{.2in}
		\begin{picture}(80,60)
		\multiput(0,0)(0,0){1}{\line(1,0){80}}
		\multiput(0,0)(0,0){1}{\line(0,1){60}}
		\multiput(0,10)(2,0){40}{\line(0,1){.1}}
		\multiput(0,20)(2,0){40}{\line(0,1){.1}}
		\multiput(0,30)(2,0){40}{\line(0,1){.1}}
		\multiput(0,40)(2,0){40}{\line(0,1){.1}}
		\multiput(0,50)(2,0){40}{\line(0,1){.1}}
		\multiput(0,60)(2,0){40}{\line(0,1){.1}}
		\multiput(10,0)(0,2){30}{\line(1,0){.1}}
		\multiput(20,0)(0,2){30}{\line(1,0){.1}}
		\multiput(30,0)(0,2){30}{\line(1,0){.1}}
		\multiput(40,0)(0,2){30}{\line(1,0){.1}}
		\multiput(50,0)(0,2){30}{\line(1,0){.1}}
		\multiput(60,0)(0,2){30}{\line(1,0){.1}}
		\multiput(70,0)(0,2){30}{\line(1,0){.1}}
		\multiput(80,0)(0,2){30}{\line(1,0){.1}}
		\put(10,10){\tableaux{{}\\{1^2_4}}}
		\put(20,10){\tableaux{{}\\{}}}
		\put(40,10){\tableaux{{}\\{}}}
		\put(50,0){\tableaux{{2^3_6}}}
		\put(3,-11){$1$}\put(13,-11){$4$}\put(23,-11){$5$}\put(33,-11){$2$}\put(43,-11){$7$}\put(53,-11){$6$}\put(63,-11){$3$}
		\end{picture}
	\end{center}
	\caption{}
\end{figure}

\begin{figure}[h]\setlength{\unitlength}{0.5mm}
	\begin{center}
		\begin{picture}(80,60)
		\multiput(0,0)(0,0){1}{\line(1,0){80}}
		\multiput(0,0)(0,0){1}{\line(0,1){60}}
		\multiput(0,10)(2,0){40}{\line(0,1){.1}}
		\multiput(0,20)(2,0){40}{\line(0,1){.1}}
		\multiput(0,30)(2,0){40}{\line(0,1){.1}}
		\multiput(0,40)(2,0){40}{\line(0,1){.1}}
		\multiput(0,50)(2,0){40}{\line(0,1){.1}}
		\multiput(0,60)(2,0){40}{\line(0,1){.1}}
		\multiput(10,0)(0,2){30}{\line(1,0){.1}}
		\multiput(20,0)(0,2){30}{\line(1,0){.1}}
		\multiput(30,0)(0,2){30}{\line(1,0){.1}}
		\multiput(40,0)(0,2){30}{\line(1,0){.1}}
		\multiput(50,0)(0,2){30}{\line(1,0){.1}}
		\multiput(60,0)(0,2){30}{\line(1,0){.1}}
		\multiput(70,0)(0,2){30}{\line(1,0){.1}}
		\multiput(80,0)(0,2){30}{\line(1,0){.1}}
		\put(10,20){\tableaux{{}\\{}\\{2^2_5}}}
		\put(30,0){\tableaux{{}}}
		\put(40,10){\tableaux{{}\\{}}}
		\put(50,00){\tableaux{{1^3_6}}}
		\put(3,-11){$1$}\put(13,-11){$5$}\put(23,-11){$2$}\put(33,-11){$4$}\put(43,-11){$7$}\put(53,-11){$6$}\put(63,-11){$3$}
		\end{picture}\hspace{.2in}
		\begin{picture}(80,60)
		\multiput(0,0)(0,0){1}{\line(1,0){80}}
		\multiput(0,0)(0,0){1}{\line(0,1){60}}
		\multiput(0,10)(2,0){40}{\line(0,1){.1}}
		\multiput(0,20)(2,0){40}{\line(0,1){.1}}
		\multiput(0,30)(2,0){40}{\line(0,1){.1}}
		\multiput(0,40)(2,0){40}{\line(0,1){.1}}
		\multiput(0,50)(2,0){40}{\line(0,1){.1}}
		\multiput(0,60)(2,0){40}{\line(0,1){.1}}
		\multiput(10,0)(0,2){30}{\line(1,0){.1}}
		\multiput(20,0)(0,2){30}{\line(1,0){.1}}
		\multiput(30,0)(0,2){30}{\line(1,0){.1}}
		\multiput(40,0)(0,2){30}{\line(1,0){.1}}
		\multiput(50,0)(0,2){30}{\line(1,0){.1}}
		\multiput(60,0)(0,2){30}{\line(1,0){.1}}
		\multiput(70,0)(0,2){30}{\line(1,0){.1}}
		\multiput(80,0)(0,2){30}{\line(1,0){.1}}
		\put(10,10){\tableaux{{}\\{2^2_4}}}
		\put(20,10){\tableaux{{}\\{}}}
		\put(40,10){\tableaux{{}\\{}}}
		\put(50,00){\tableaux{{1^3_6}}}
		\put(3,-11){$1$}\put(13,-11){$4$}\put(23,-11){$5$}\put(33,-11){$2$}\put(43,-11){$7$}\put(53,-11){$6$}\put(63,-11){$3$}
		\end{picture}\hspace{.2in}
		\begin{picture}(80,60)
		\multiput(0,0)(0,0){1}{\line(1,0){80}}
		\multiput(0,0)(0,0){1}{\line(0,1){60}}
		\multiput(0,10)(2,0){40}{\line(0,1){.1}}
		\multiput(0,20)(2,0){40}{\line(0,1){.1}}
		\multiput(0,30)(2,0){40}{\line(0,1){.1}}
		\multiput(0,40)(2,0){40}{\line(0,1){.1}}
		\multiput(0,50)(2,0){40}{\line(0,1){.1}}
		\multiput(0,60)(2,0){40}{\line(0,1){.1}}
		\multiput(10,0)(0,2){30}{\line(1,0){.1}}
		\multiput(20,0)(0,2){30}{\line(1,0){.1}}
		\multiput(30,0)(0,2){30}{\line(1,0){.1}}
		\multiput(40,0)(0,2){30}{\line(1,0){.1}}
		\multiput(50,0)(0,2){30}{\line(1,0){.1}}
		\multiput(60,0)(0,2){30}{\line(1,0){.1}}
		\multiput(70,0)(0,2){30}{\line(1,0){.1}}
		\multiput(80,0)(0,2){30}{\line(1,0){.1}}
		\put(20,10){\tableaux{{}\\{}}}
		\put(30,00){\tableaux{{}}}
		\put(40,10){\tableaux{{}\\{}}}
		\put(50,0){\tableaux{{1^3_6}}}
		\put(60,00){\tableaux{{2^3_8}}}
		\put(3,-11){$1$}\put(13,-11){$2$}\put(23,-11){$5$}\put(33,-11){$4$}\put(43,-11){$7$}\put(53,-11){$6$}\put(63,-11){$8$}\put(73,-11){$3$}
		\end{picture}
	\end{center}
	\caption{}
\end{figure}
\end{ex}

\begin{rem}
For each permutation $\omega_\mathcal T$ with $\mathcal T\in r[k,m]\cdot \mathcal T_\omega$, it is also possible to
choose another diagram $\mathcal T'\in k_{(m)}\cdot \mathcal T_\omega$, in a unique way, such that the corresponding
sequence $a_1,\ldots, a_m$ is non-decreasing. This version coincides with Kogan-Kumar version of Sottile's Theorem
given in \cite{KK}. Note that their version has an extra condition which already holds in our version. Also the version originally conjectured by Bergeron and
Billey in \cite{NS} and proved by Sottile in \cite{F} can be obtained using the techniques of this section.
\end{rem}

We can modify the above arguments to obtain a bijective version of the second part of Sottile's Theorem. We only state the result and leave the proof
as an easy exercise.

\begin{thm}
Let $\omega$ be a permutation in $S_n$, $k, m\in\Nn$ such that $m\le k \le n$. Also let $c[k,m]$ denote the
$(m+1)$-cycle $(k-m+1\, \ldots \, k+1)$. Let $c[k,m]\cdot \mathcal T_\omega$ be the subset of $k_{(m)}\cdot \mathcal T_\omega$
consisting of tower diagrams $\mathcal T$ with labels $(1, a_1, b_1),\ldots, (m, a_m, b_m)$ such that $b_1\ge b_2\ge \ldots\ge b_m$ and no two labelled
cell have the same flight number (equivalently, with distinct numbers $a_i$).
Then
\[
\mathfrak S_\omega \mathfrak S_{c[k,m]} = \sum_{\mathcal T\in c[k,m]\cdot \mathcal T_\omega}
 \mathfrak S_{\omega_\mathcal T}.
\]

\end{thm}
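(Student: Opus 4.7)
The plan is to mirror the proof of Theorem \ref{thm:pieri}, with the roles of the first and second indices of the transpositions interchanged. A Sottile c-sequence is characterized by distinct $a_i$'s, and by the flight-number lemma $\fn(\mathcal T, c_i) = a_i$, which was proved in the previous section without using the ordering of the $a_i$'s, this condition is equivalent to the labelled cells having distinct flight numbers. So the two descriptions of $c[k,m]\cdot \mathcal T_\omega$ given in the statement coincide, and this set is a natural candidate for indexing the permutations that appear in the product.

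First I would establish the analog of the reordering proposition: given a tower diagram $\mathcal T \in k_{(m)}\cdot \mathcal T_\omega$ whose labels $(i, a_i, b_i)$ have distinct $a_i$'s, I can rearrange the sequence to obtain a labelling $(i, a'_i, b'_i)$ with $b'_1 \ge b'_2 \ge \ldots \ge b'_m$ and the same final permutation. The key identity is that whenever $b_{i-1} \neq b_i$, the four indices $a_{i-1}, b_{i-1}, a_i, b_i$ are automatically distinct, since $a_{i-1} = a_i$ is ruled out by the Sottile c-condition and the crossings $a_{i-1} = b_i$ or $a_i = b_{i-1}$ contradict $a_{\cdot}\le k < b_{\cdot}$. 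Disjoint transpositions commute, and commuting adjacent pairs preserves the length-additivity of the chain, so a bubble-sort yields a non-increasing sequence of $b_i$'s.

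The main step is to prove that each shape in $c[k,m]\cdot \mathcal T_\omega$ appears only once. Suppose $\mathcal T$ and $\mathcal T'$ are two such diagrams with the same shape. Since $\omega_\mathcal T = \omega_{\mathcal T'}$, the sets of labelled cells in the two diagrams coincide as subsets of the common shape, and since the flight numbers of labelled cells are distinct and equal to the $a_i$'s, the tuples $(a_i)$ and $(a'_i)$ are permutations of each other. Once the linear ordering of the labels is fixed, the intermediate permutations $\omega^{(i)}$ and hence the $b_i$'s are determined. The only remaining freedom is the ordering within a maximal block of indices $j, j+1, \ldots, i$ with $b_j = \cdots = b_i = b$. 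The hard part will be to show that within such a block only one order is compatible with the length-additivity of the $k$-Bruhat chain. This mirrors the corresponding step in the r-case: the transpositions $t_{a_j, b}, \ldots, t_{a_i, b}$ share their large index $b$ but are otherwise distinct, so they do not commute pairwise. The only rewriting available is the identity $t_{a,b}t_{c,b} = t_{a,c}t_{a,b}$, but $t_{a,c}$ has both indices at most $k$ and therefore is not a $k$-Bruhat transposition, so such a rearrangement breaks the chain. Hence the order within a block is forced.

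Assembling these steps, the set $c[k,m]\cdot \mathcal T_\omega$ maps bijectively to the set of permutations $\nu$ with $c^\nu_{\omega, c[k,m]} \neq 0$: existence follows from the second part of Sottile's theorem combined with the reordering result, and uniqueness follows from the previous paragraph. Since each non-zero coefficient equals $1$, the claimed identity of Schubert polynomials follows. The principal technical obstacle is the within-block uniqueness in Step 3; everything else is a routine dualization of the arguments already carried out for Theorem \ref{thm:pieri}.
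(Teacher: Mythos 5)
Your proposal is correct and takes exactly the route the paper intends, since the paper leaves this theorem as a dualization of the proof of Theorem \ref{thm:pieri}: your dual reordering step, the use of the flight-number lemma $\fn(\mathcal T,c_i)=a_i$ to identify ``distinct $a_i$'' with ``distinct flight numbers of labelled cells,'' and the within-block uniqueness via $t_{a,b}t_{c,b}=t_{a,c}t_{a,b}$ with $t_{a,c}$ failing to be a $k$-Bruhat step are precisely the mirrored versions of the paper's lemmas for the row case. The remaining informal points (e.g.\ that the labelled cells and hence the multiset of $a_i$'s are determined by the shape) are at the same level of detail as the paper's own argument for the $r[k,m]$ case.
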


\begin{ex}
In the above example, we applied Monk's algorithm twice with $k=3$. The same set of tower diagrams can be used to evaluate the product
$\mathfrak S_{\omega}\mathfrak S_{c[3,2]}$. Indeed, in this case, we look for tower diagrams with labels determined as in the above theorem. One gets
\[
\mathfrak S_\omega \mathfrak S_{c[3,2]} = \mathfrak S_{\omega^{1,1}} + \mathfrak S_{\omega^{2,1}} + \mathfrak S_{\omega^{3,1}}+ \mathfrak S_{\omega^{3,2}}.
\]

\end{ex}

\end{document}